\colorlet{green}{black!30!green} 
\tikzstyle directed=[postaction={decorate,decoration={markings,
    mark=at position #1 with {\arrow{>}}}}]
\tikzstyle rdirected=[postaction={decorate,decoration={markings,
    mark=at position #1 with {\arrow{<}}}}]
\tikzset{anchorbase/.style={baseline={([yshift=-0.5ex]current bounding box.center)}}}
\newcommand{\onenn}[1]{{\mathbf 1}_{#1}}
\newcommand{\onel}{{\mathbf 1}_{\lambda}}
\newcommand{\onell}[1]{{\mathbf 1}_{#1}}
\newcommand\sE{{\cal{E}}}
\newcommand\sF{{\cal{F}}}
\def\cal#1{\mathcal{#1}}%
\def\1{\mathbbm{1}}%
\def\l{\lambda}
\newcommand{\END}{{\rm END}}
\newcommand{\UU}{{\bf U}}
\newcommand{\U}{\dot{{\bf U}}}
\newcommand{\Ucat}{\cal{U}}
\newcommand{\Ucatc}{\check{\cal{U}}}
\newcommand{\UcatD}{\dot{\cal{U}}}
\newcommand{\sln}{\mf{sl}_n}
\newcommand{\slm}{\mf{sl}_m}
\newcommand{\slnn}[1]{\mf{sl}_{#1}}
\newcommand{\gln}{\mf{gl}_n}
\newcommand{\glm}{\mf{gl}_m}
\newcommand{\glnn}[1]{\mf{gl}_{#1}}
\newcommand{\Foam}[2]{#1\cat{Foam}_{#2}}
\newcommand{\Kom}{\operatorname{Kom}}
\newcommand{\Tr}{\operatorname{Tr}}
\newcommand{\vTr}{\operatorname{vTr}}
\newcommand{\hTr}{\operatorname{hTr}}
\newcommand{\grvTr}{\widetilde{\vTr}}
\newcommand{\Rep}{\cat{Rep}}
\newcommand{\grRep}{\cat{grRep}}
\newcommand{\SH}{\operatorname{SH}}
\newcommand{\SHz}{\operatorname{SH}_{t=0}}
\newcommand{\BN}[2]{{_{#1}\mathcal{BN}_{#2}}}
\newcommand{\AFoam}[1][2]{#1\cal{A}\cat{Foam}}
\newcommand{\saKh}{\mathsf{saKh}}
\newcommand{\saKhR}{\mathsf{saKhR}}
\newcommand{\Kh}{\mathsf{Kh}}
\newcommand{\KhR}{\mathsf{KhR}}
\newcommand{\AWeb}[1][n]{#1\cal{A}\cat{Web}}
\newcommand{\refequal}[1]{\xy {\ar@{=}^{#1}
(-1,0)*{};(1,0)*{}};
\endxy}
\newcommand{\cat}[1]{\ensuremath{\mbox{\bfseries {\upshape {#1}}}}}
\newcommand{\Hom}{{\rm Hom}}
\newcommand{\HOM}{{\rm HOM}}
\renewcommand{\to}{\rightarrow}
\newcommand{\id}{{\rm id}}
\newcommand{\End}{{\rm End}}
\def\mf{\mathfrak}
\def\shuffle{\,\raise 1pt\hbox{$\scriptscriptstyle\cup{\mskip
               -4mu}\cup$}\,}
\theoremstyle{definition}
\newtheorem{thm}{Theorem}[section]
\newtheorem{cor}[thm]{Corollary}
\newtheorem{conj}[thm]{Conjecture}
\newtheorem{lem}[thm]{Lemma}
\newtheorem{rem}[thm]{Remark}
\newtheorem{prop}[thm]{Proposition}
\newtheorem{defn}[thm]{Definition}
\numberwithin{equation}{section}
\def\emph#1{{\sl #1\/}}
\def\ie{{\sl i.e. \/}}
\def\eg{{\sl e.g. \/}}
\let\tilde=\widetilde
\let\phi=\varphi
\let\theta=\vartheta
\let\epsilon=\varepsilon
\def\C{{\mathbbm C}}
\def\N{{\mathbbm N}}
\def\R{{\mathbbm R}}
\def\Z{{\mathbbm Z}}
\def\cal#1{\mathcal{#1}}%
\def\1{\mathbbm{1}}%
\newcommand{\dotsheet}[2][1]{
\xy
(0,0)*{
\begin{tikzpicture} [fill opacity=0.2,decoration={markings, mark=at position 0.6 with {\arrow{>}}; }, scale=#1]
	\draw [very thick, fill=red] (1,1) -- (-1,2) -- (-1,-1) -- (1,-2) -- cycle;
	\node [opacity=1] at (0,0) {$\bullet^{#2}$};
\end{tikzpicture}};
\endxy}
\newcommand{\sphere}[1][1]{
\xy
(0,0)*{
\begin{tikzpicture} [fill opacity=0.2, scale=#1]
	\path [fill=red] (0,0) circle (1);
	\draw (-1,0) .. controls (-1,-.4) and (1,-.4) .. (1,0);
	\draw[dashed] (-1,0) .. controls (-1,.4) and (1,.4) .. (1,0);
	\draw[very thick] (0,0) circle (1);
\end{tikzpicture}};
\endxy}
\newcommand{\dottedsphere}[2][1]{
\xy
(0,0)*{
\begin{tikzpicture} [fill opacity=0.2, scale=#1]
	\path [fill=red] (0,0) circle (1);
	\draw (-1,0) .. controls (-1,-.4) and (1,-.4) .. (1,0);
	\draw[dashed] (-1,0) .. controls (-1,.4) and (1,.4) .. (1,0);
	\draw[very thick] (0,0) circle (1);
	\node [opacity=1]  at (0,0.6) {$\bullet$};
	\node [opacity=1] at (-0.3,0.6) {$#2$};
\end{tikzpicture}};
\endxy}
\newcommand{\cylinder}[1][1]{
\xy
(0,0)*{
\begin{tikzpicture} [fill opacity=0.2,  decoration={markings, mark=at position 0.6 with {\arrow{>}};  }, scale=#1]
	\path [fill=red] (0,4) ellipse (1 and 0.5);
	\path [fill=red] (0,0) ellipse (1 and 0.5);
	\path[fill=red, opacity=.3] (-1,4) .. controls (-1,3.34) and (1,3.34) .. (1,4) --
		(1,0) .. controls (1,.66) and (-1,.66) .. (-1,0) -- cycle;
	\draw [very thick] (0,4) ellipse (1 and 0.5);
	\draw [very thick] (0,0) ellipse (1 and 0.5);
	\draw[very thick] (1,4) -- (1,0);
	\draw[very thick] (-1,4) -- (-1,0);
\end{tikzpicture}};
\endxy
}
\newcommand{\slthncfour}[1][1]{
\xy
(0,0)*{
\begin{tikzpicture} [fill opacity=0.2,  decoration={markings, mark=at position 0.6 with {\arrow{>}};  }, scale=#1]
	\path [fill=red] (0,4) ellipse (1 and 0.5);
	\path [fill=red, opacity=.3] (1,4) .. controls (1,3.34) and (-1,3.34) .. (-1,4) --
		(-1,4) .. controls (-1,2) and (1,2) .. (1,4);
	\path [fill=red] (0,0) ellipse (1 and 0.5);
	\path [fill=red, opacity=.3] (1,0) .. controls (1,.66) and (-1,.66) .. (-1,0) --
		(-1,0) .. controls (-1,2) and (1,2) .. (1,0);
	\draw [very thick] (0,4) ellipse (1 and 0.5);
	\draw [very thick] (0,0) ellipse (1 and 0.5);
	\draw[very thick] (-1,0) .. controls (-1,2) and (1,2) .. (1,0);
	\draw [very thick] (-1,4) .. controls (-1,2) and (1,2) .. (1,4);
	\node[opacity=1] at (0,1) {$\bullet$};
\end{tikzpicture}};
\endxy
}
\newcommand{\slthncfive}[1][1]{
\xy
(0,0)*{
\begin{tikzpicture} [fill opacity=0.2,  decoration={markings, mark=at position 0.6 with {\arrow{>}};  }, scale=#1]
	\path [fill=red] (0,4) ellipse (1 and 0.5);
	\path [fill=red, opacity=.3] (1,4) .. controls (1,3.34) and (-1,3.34) .. (-1,4) --
		(-1,4) .. controls (-1,2) and (1,2) .. (1,4);
	\path [fill=red] (0,0) ellipse (1 and 0.5);
	\path [fill=red, opacity=.3] (1,0) .. controls (1,.66) and (-1,.66) .. (-1,0) --
		(-1,0) .. controls (-1,2) and (1,2) .. (1,0);
	\draw [very thick] (0,4) ellipse (1 and 0.5);
	\draw [very thick] (0,0) ellipse (1 and 0.5);
	\draw[very thick] (-1,0) .. controls (-1,2) and (1,2) .. (1,0);
	\draw [very thick] (-1,4) .. controls (-1,2) and (1,2) .. (1,4);
	\node[opacity=1] at (0,3) {$\bullet$};
\end{tikzpicture}};
\endxy
}
\newcommand{\capFEfoam}[3][.5]{
\xy
(0,0)*{
\begin{tikzpicture} [scale=#1,fill opacity=0.2]
\path[fill=red] (2.5,2) to (-2.5,2) to (-2.5,-1) to (2.5,-1);
\path[fill=blue] (1.5,-2) to [out=90,in=0] (0,.5) to [out=180,in=90] (-1.5,-2) to
	(-.5,-1) to [out=90,in=180] (0,-.25) to [out=0,in=90] (.5,-1);
\path[fill=red] (2,1) to (-2,1) to (-2,-2) to (2,-2);
\draw[very thick, directed=.5] (2.5,2) to (-2.5,2);
\draw[very thick] (2.5,2) to (2.5,-1);
\draw[very thick] (-2.5,2) to (-2.5,-1);
\draw[very thick, directed=.5] (2.5,-1) to (-2.5,-1);
\draw[very thick, red, directed=.75] (-.5,-1) to [out=90,in=180] (0,-.25)
	to [out=0,in=90] (.5,-1);
\node[red, opacity=1] at (1.75,1.5) {$_{#2}$};
\draw[very thick, directed=.5] (1.5,-2) to (.5,-1);
\draw[very thick, directed=.5] (-.5,-1) to (-1.5,-2);
\draw[very thick, directed=.5] (2,1) to (-2,1);
\draw[very thick] (2,1) to (2,-2);
\draw[very thick] (-2,1) to (-2,-2);
\draw[very thick, directed=.5] (2,-2) to (-2,-2);
\draw[very thick, red, directed=.65] (1.5,-2) to [out=90,in=0] (0,.5)
	to [out=180,in=90] (-1.5,-2);
\node[red, opacity=1] at (1.25,.5) {$_{#3}$};
\end{tikzpicture}};
\endxy
}
\newcommand{\curm}{\UU(\slnn{m}[t])}
\newcommand{\dcurm}{{\U(\slnn{m}[t])}}
\begin{document}
%

\title[Sutured annular Khovanov-Rozansky homology]
{
Sutured annular Khovanov-Rozansky homology
}

\author{Hoel Queffelec}
\address{Mathematical Sciences Institute, Australian National University, Canberra, ACT 0200, Australia}
\email{hoel.queffelec@anu.edu.au}

 \author{David E. V. Rose}
 \address{Department of Mathematics, University of Southern California, Los Angeles, CA 90089, USA}
 \email{davidero@usc.edu}

\begin{abstract}
We introduce an $\sln$ homology theory for knots and links in the thickened annulus.
To do so, we first give a fresh perspective on sutured annular Khovanov homology, 
showing that its definition follows naturally from trace decategorifications of enhanced $\slnn{2}$ foams and categorified quantum $\glm$, via classical skew Howe duality. 
This framework then extends to give our annular $\sln$ link homology theory, which we call sutured annular Khovanov-Rozansky homology.
We show that the $\sln$ sutured annular Khovanov-Rozansky homology of an annular link carries an action of the Lie algebra $\sln$, 
which in the $n=2$ case recovers a result of Grigsby-Licata-Wehrli.
\end{abstract}

\maketitle

%
\section{Introduction}
%

Following Khovanov's categorification of the Jones polynomial of knots and links in $S^3$ \cite{Kh1,Kh2,Kh6}, 
Asaeda, Przytycki, and Sikora \cite{APS} introduced a homology theory for links in $\Sigma \times [0,1]$ 
categorifying the Kauffman skein module of the surface $\Sigma$. 
In the case that $\Sigma = \cal{A} := S^1 \times [0,1]$, this invariant has been extensively studied by Roberts \cite{Rob} and Grigsby-Wehrli \cite{GW}, 
and dubbed sutured annular Khovanov homology ($\saKh$), due to its relation with sutured Floer homology.
In recent work \cite{GLW}, Grigsby, Licata, and Wehrli have shown that $\saKh$ is not only useful topologically, 
but is also of representation-theoretic interest. Motivated by a conjecture relating this invariant to symplectic geometry, 
they show that the $\saKh$ of a link carries an action of $\slnn{2}$.

After Khovanov's initial work on link homology, he and Rozansky constructed a homology theory for knots and links in $S^3$ categorifying 
the $\sln$ Reshetikhin-Turaev knot polynomials \cite{KhR}. However, there has not been a construction extending Khovanov-Rozansky homology 
to knots and links in other 3-manifolds.

In the present work, we give the first such extension, constructing an $\sln$ homology theory for knots and links in $\cal{A} \times [0,1]$ which categorifies 
the $\sln$ skein module of the annulus. 
To do so, we first give a new construction in the $n=2$ case, showing that $\saKh$ follows from trace decategorifications of the Khovanov-Lauda 
categorified quantum group $\cal{U}_Q(\glm)$ \cite{KL, KL2, KL3} (see also independent work of Rouquier \cite{Rou2}) and the enhanced $\slnn{2}$ foam category introduced 
by Blanchet \cite{Blan} and studied by the authors (joint with Lauda) in \cite{LQR1}. 
The extension to the $\sln$ case is natural from this point of view, and we obtain an $\sln$ homology for colored links in the thickened annulus, which we call 
sutured annular Khovanov-Rozansky homology ($\saKhR$). 
A direct consequence of our construction is that the $\saKhR$ of an annular link carries an action of $\sln$, 
which specializes in the $n=2$ case to recover the aforementioned result of Grigsby-Licata-Wehrli.

We'll now quickly summarize our construction in the $n=2$ case, where the structures are most familiar. 
The definitions, details, and extension to general $n$ are saved for the remainder of the paper.
In \cite{LQR1}, we showed that there is a skew Howe $2$-functor $\cal{U}_Q(\glm) \rightarrow 2\cat{Foam}$, where the latter is the enhanced (Blanchet) 
version of Bar-Natan's quotient of the $2$-category of planar tangles and cobordisms. 
Given a link $\cal{L} \subset S^3$, we can assign to it a complex 
$C(\cal{L})$ in $2\cat{Foam}$, from which its Khovanov homology can be obtained. Moreover, this $2$-functor factors through 
the quotient $\cal{U}_Q(\glm)^{0 \leq 2}$, in which we've killed $\glm$ weights with entries not in the set $\{0,1,2\}$.

We rederive $\saKh$ by applying trace decategorifications to the $2$-functor $\cal{U}_Q(\glm)^{0 \leq 2} \to 2\cat{Foam}$. 
Given a graded 2-category $\cal{C}$, we consider both its horizontal trace $\hTr(\cal{C})$ and a graded version of its 
vertical trace  $\grvTr(\cal{C})$, which includes into the former as a full subcategory.
Since both of these operations are functorial, we obtain the following diagram:
\begin{equation}\label{maindiag}
\begin{gathered}
\xymatrix{
\grvTr(\cal{U}_Q(\glm)^{0 \leq 2}) \ar[rr] \ar@{^{(}->}[d] & & \grvTr(2\cat{Foam}) \ar@{^{(}->}[d] \\
\hTr(\cal{U}_Q(\glm)^{0 \leq 2}) \ar[rr] & & \hTr(2\cat{Foam}).
} 
\end{gathered}
\end{equation}
The category in the bottom right is an enhanced version of the Bar-Natan category of the annulus. 
Using Rickard complexes \cite{CR} in $\cal{U}_Q(\glm)$, 
we assign a complex $C(\cal{L})$ in this category to any link $\cal{L} \subseteq \cal{A}\times [0,1]$. 
This complex lifts\footnote{In fact, we'll show that both of the inclusions in equation \eqref{maindiag} are equivalences of categories.} 
to a complex $\tilde{C}(\cal{L})$ in $\grvTr(2\cat{Foam})$, 
which in turn lifts to a complex $\tilde{D}(\cal{L})$ in $\grvTr(\cal{U}_Q(\glm)^{0 \leq 2})$, the universal annular $\slnn{2}$ invariant of $\cal{L}$. 
Results of Beliakova, Guliyev, Habiro, Lauda, Webster, and \u{Z}ivkovi\'{c} \cite{BHLZ,BGHL,BHLW} show that $\grvTr(\cal{U}_Q(\slm))$ is isomorphic to 
$\dcurm$, the idempotented form of the current algebra of $\slm$.
This pairs with skew Howe duality on $\bigwedge(\C^2 \otimes \C^m)$ to give a functor 
\[
\dcurm \xrightarrow{t=0}  \U(\slm) \xrightarrow{\SH} \Rep(\slnn{2})
\]
which necessarily factors through $\grvTr(\cal{U}_Q(\glm)^{0 \leq 2})$. Applying the induced functor 
\[
\grvTr(\cal{U}_Q(\glm)^{0 \leq 2})\xrightarrow{\SH}\Rep(\slnn{2})
\]
to $\tilde{D}(\cal{L})$ gives a complex whose chain groups are $\slnn{2}$ representations and whose differentials are $\slnn{2}$ intertwiners. 
The homology of this complex is $\saKh(\cal{L})$, which clearly carries an action of $\slnn{2}$.
The extension to general $n$ follows from a similar schematic, with $\Foam{2}{}$ replaced by $\Foam{n}{}$, the enhanced $\sln$ foam 2-category 
constructed by the authors in \cite{QR}.

In Section \ref{background}, we review Bar-Natan's cobordism model for Khovanov homology, Blanchet's enhanced version, and the relation 
to categorified quantum groups. We also review Asaeda-Przytycki-Sikora's annular Khovanov homology, its relation to the Bar-Natan category of 
the annulus, and introduce the Blanchet version of this category. In Section \ref{traces}, we discuss traces of 2-categories, showing how 
the (2-)categories discussed in Section \ref{background} assemble to give the commutative diagram in equation \eqref{maindiag}. 
In Section \ref{sH}, we show how to recover $\saKh$ via skew Howe duality, and also show that our arguments extend \textit{mutatis mutandis} 
to define $\sln$ sutured annular Khovanov-Rozansky homology, which again carries an action of $\sln$. 
Finally, in Section \ref{KhR} we relate the sutured annular Khovanov-Rozansky homology of an annular link to the 
Khovanov-Rozansky homology of the corresponding link in $S^3$, by constructing a spectral sequence from the former to the latter. \\

\noindent \textbf{Acknowledgements:} 
We'd like to thank 
Ivan Cherednik, 
Mikhail Khovanov,
Aaron Lauda, 
Lukas Lewark,
Scott Morrison,
Lev Rozansky, 
Peter Samuelson,
and Daniel Tubbenhauer
for their interest in this work and for helpful discussions.
Special thanks to Eli Grigsby, Tony Licata, and Stephan Wehrli for sharing their results before \cite{GLW} appeared;
their theorem that sutured annular Khovanov homology carries an action of $\slnn{2}$ served as a guiding principle for this work.
 H.Q. was funded by the ARC DP 140103821 and D.R. was partially supported by the John Templeton Foundation and NSF grant DMS-1255334.

%
\section{Background}\label{background}
%

\subsection{Khovanov homology}

In \cite{BN2}, Bar-Natan interprets Khovanov's homology theory for links $\cal{L} \subseteq S^3$ in terms of a quotient of the 
category of planar tangles and cobordisms. For\footnote{By convention, $0 \in \N$.} $a,b \in \N$, let $\BN{b}{a}$ denote the category whose objects are $\Z$-graded, 
formal direct sums of planar $(b,a)$-tangles in $[0,1] \times \R$, and whose morphisms are matrices of linear sums of degree-zero dotted cobordisms between 
such tangles, modulo isotopy (relative to the boundary) and local relations. We'll view our tangles as mapping from the $a$ endpoints on 
the right to the $b$ endpoints on the left, and cobordisms as mapping from the bottom tangle to the top. In fact, the categories $\{\BN{b}{a}\}_{a,b \in \N}$ 
assemble to give a $2$-category $\displaystyle \cal{BN}$ where $\Hom_{\cal{BN}}(a,b) := \BN{b}{a}$, 
and in which horizontal and vertical composition of 2-morphisms is given by gluing cobordisms in the indicated direction.

The local relations are as follows: 
\begin{equation}\label{BNrel}
\xy
(0,5)*{
\sphere[.4] \;\; = \;\; 0
};
(0,-5)*{
\dottedsphere[.4]{}  \;\; = \;\; 1
};
\endxy \qquad , \qquad
\cylinder[.4] \;\; = \;\; \slthncfour[.4] \;\; + \;\; \slthncfive[.4]
\end{equation}
and the degree of a cobordism $C: q^{d_1}T_1 \to q^{d_2}T_2$ is given by the 
formula\footnote{We use the negative of the grading convention from \cite{BN2}, 
to match with the categorified quantum group convention that a dot has degree $2$.}
\begin{equation}\label{sl2foamdeg}
\deg(C) = -\chi(C) + 2\#D + \frac{\# \partial}{2} - d_2 + d_1
\end{equation}
where $\chi$ is the Euler characteristic of the underlying surface, 
$\# D$ is the number of dots, $\# \partial$ is the number of boundary points in either $T_1$ or $T_2$, and the
powers of the formal variable $q$ denote degree shift.

The categorified skein relations\footnote{Here, and throughout, we've \uwave{underlined} the term of a complex in homological degree zero. 
Additionally, we use the grading shifts for complexes assigned to crossings from \cite{QR} throughout.}:
\begin{equation}\label{BNskein}
\left \llbracket
\xy
(0,0)*{
\begin{tikzpicture} [scale=.5]
\draw[very thick, directed=.99] (0,0) to (1,1);
\draw[very thick, directed=.99] (.4,.6) to (0,1);
\draw[very thick] (1,0) to (.6,.4);
\end{tikzpicture}
}
\endxy
\right \rrbracket
=
\left(
q^{-1} \;
\xy
(0,0)*{
\begin{tikzpicture} [scale=.5]
\draw[very thick] (0,0) to [out=45,in=315] (0,1);
\draw[very thick] (1,0) to [out=135,in=225] (1,1);
\end{tikzpicture}
}
\endxy
\xrightarrow{
\xy
(0,0)*{
\begin{tikzpicture} [scale=.15,fill opacity=0.2]
	\path [fill=red] (4.25,-.5) to (4.25,2) to [out=170,in=10] (-.5,2) to (-.5,-.5) to 
		[out=0,in=225] (.75,0) to [out=90,in=180] (1.625,1.25) to [out=0,in=90] 
			(2.5,0) to [out=315,in=180] (4.25,-.5);
	\path [fill=red] (3.75,.5) to (3.75,3) to [out=190,in=350] (-1,3) to (-1,.5) to 
		[out=0,in=135] (.75,0) to [out=90,in=180] (1.625,1.25) to [out=0,in=90] 
			(2.5,0) to [out=45,in=180] (3.75,.5);
	\draw [very thick] (.75,0) to [out=120,in=0] (-1,.5);
	\draw [very thick] (.75,0) to [out=240,in=0] (-.5,-.5);
	\draw [very thick] (3.75,.5) to [out=180,in=60] (2.5,0);
	\draw [very thick] (4.25,-.5) to [out=180,in=300] (2.5,0);
	\draw [very thick] (.75,0) to [out=90,in=180] (1.625,1.25);
	\draw [very thick] (1.625,1.25) to [out=0,in=90] (2.5,0);
	\draw [very thick] (3.75,3) to (3.75,.5);
	\draw [very thick] (4.25,2) to (4.25,-.5);
	\draw [very thick] (-1,3) to (-1,.5);
	\draw [very thick] (-.5,2) to (-.5,-.5);
	\draw [very thick] (4.25,2) to [out=170,in=10] (-.5,2);
	\draw [very thick] (3.75,3) to [out=190,in=350] (-1,3);	
\end{tikzpicture}
};
\endxy
}
\uwave{
\xy
(0,0)*{
\begin{tikzpicture} [scale=.5]
\draw[very thick] (0,0) to [out=45,in=135] (1,0);
\draw[very thick] (0,1) to [out=315,in=225] (1,1);
\end{tikzpicture}
}
\endxy} \;
\right) 
\;\; , \;\;
\left \llbracket
\xy
(0,0)*{
\begin{tikzpicture} [scale=.5]
\draw[very thick, directed=.99] (1,0) to (0,1);
\draw[very thick, directed=.99] (.6,.6) to (1,1);
\draw[very thick] (0,0) to (.4,.4);
\end{tikzpicture}
}
\endxy
\right \rrbracket
=
\left(
\uwave{
\xy
(0,0)*{
\begin{tikzpicture} [scale=.5]
\draw[very thick] (0,0) to [out=45,in=135] (1,0);
\draw[very thick] (0,1) to [out=315,in=225] (1,1);
\end{tikzpicture}
}
\endxy}
\xrightarrow{
\xy
(0,0)*{
\begin{tikzpicture} [scale=.15,fill opacity=0.2]
	\path [fill=red] (4.25,2) to (4.25,-.5) to [out=170,in=10] (-.5,-.5) to (-.5,2) to
		[out=0,in=225] (.75,2.5) to [out=270,in=180] (1.625,1.25) to [out=0,in=270] 
			(2.5,2.5) to [out=315,in=180] (4.25,2);
	\path [fill=red] (3.75,3) to (3.75,.5) to [out=190,in=350] (-1,.5) to (-1,3) to [out=0,in=135]
		(.75,2.5) to [out=270,in=180] (1.625,1.25) to [out=0,in=270] 
			(2.5,2.5) to [out=45,in=180] (3.75,3);
	\draw [very thick] (4.25,-.5) to [out=170,in=10] (-.5,-.5);
	\draw [very thick] (3.75,.5) to [out=190,in=350] (-1,.5);
	\draw [very thick] (2.5,2.5) to [out=270,in=0] (1.625,1.25);
	\draw [very thick] (1.625,1.25) to [out=180,in=270] (.75,2.5);
	\draw [very thick] (3.75,3) to (3.75,.5);
	\draw [very thick] (4.25,2) to (4.25,-.5);
	\draw [very thick] (-1,3) to (-1,.5);
	\draw [very thick] (-.5,2) to (-.5,-.5);
	\draw [very thick] (.75,2.5) to [out=120,in=0] (-1,3);
	\draw [very thick] (.75,2.5) to [out=240,in=0] (-.5,2);
	\draw [very thick] (3.75,3) to [out=180,in=60] (2.5,2.5);
	\draw [very thick] (4.25,2) to [out=180,in=300] (2.5,2.5);		
\end{tikzpicture}
};
\endxy
}
q \;
\xy
(0,0)*{
\begin{tikzpicture} [scale=.5]
\draw[very thick] (0,0) to [out=45,in=315] (0,1);
\draw[very thick] (1,0) to [out=135,in=225] (1,1);
\end{tikzpicture}
}
\endxy \;
\right)
\end{equation}
assign a complex in $\cal{BN}$ to any framed tangle; in particular, they assign a complex $\llbracket \cal{L} \rrbracket$ in 
$\cal{BN}$ to a link $\cal{L} \subseteq S^3$. 
Applying (graded) $\HOM(\varnothing,-)$ to each term of this complex 
and imposing the further relation
\begin{equation}\label{2dots}
\dotsheet[.5]{2} \;\; = \;\; 0
\end{equation}
gives a complex of graded vector spaces, and taking homology gives $\Kh(\cal{L})$, the Khovanov homology of the link.

In \cite{LQR1}, the authors (joint with Lauda) relate $2\cat{Foam}$, the Blanchet foam version of Bar-Natan's cobordism $2$-category, to categorified quantum groups. 
The $2$-category $2\cat{Foam}$, introduced in \cite{Blan}, provides a properly\footnote{Khovanov's original construction of $\slnn{2}$ link homology is functorial under link 
cobordism only up to factors of $\pm1$. See \cite{CMW} for the original fix to this issue, which requires the use of complex coefficients in the ground ring.} 
functorial model for Khovanov homology, and is presented in the spirit of Khovanov's $\slnn{3}$ foams \cite{Kh5} (see also \cite{MSV} and \cite{QR} for the $\sln$ case).
Objects in the $2$-category $2\cat{Foam}$ are sequences of the symbols $1$ and $2$, which we view as labeling boundary points on $\{0\} \times \R$ and 
$\{1\} \times \R$, together with a zero object.  
The $1$-morphisms are graded, formal direct sums of left-directed $\slnn{2}$ webs -- trivalent graphs in $[0,1] \times \R$ generated by the basic webs:
\[
\xy
(0,0)*{
\begin{tikzpicture}[scale=.5]
	\draw [very thick, directed=.55] (1,0) to (-1,0);
	\node at (1.25,0) {\tiny $1$};
	\node at (-1.25,0) {\tiny $1$};
\end{tikzpicture}
};
\endxy
\quad , \quad
\xy
(0,0)*{
\begin{tikzpicture}[scale=.5]
	\draw [double] (1,0) to (-1,0);
	\node at (1.25,0) {\tiny $2$};
	\node at (-1.25,0) {\tiny $2$};
\end{tikzpicture}
};
\endxy
\quad , \quad
\xy
(0,0)*{
\begin{tikzpicture}[scale=.5]
	\draw [double] (2.25,0) to (.75,0);
	\draw [very thick,directed=.55] (.75,0) to [out=120,in=0] (-1,.75);
	\draw [very thick,directed=.55] (.75,0) to [out=240,in=0] (-1,-.75);
	\node at (2.5,0) {\tiny $2$};
	\node at (-1.25,.75) {\tiny $1$};
	\node at (-1.25,-.75) {\tiny $1$};
\end{tikzpicture}
};
\endxy
\quad , \quad
\xy
(0,0)*{
\begin{tikzpicture}[scale=.5]
	\draw [double] (-2.25,0) to (-.75,0);
	\draw [very thick,rdirected=.55] (-.75,0) to [out=60,in=180] (1,.75);
	\draw [very thick,rdirected=.55] (-.75,0) to [out=300,in=180] (1,-.75);
	\node at (-2.5,0) {\tiny $2$};
	\node at (1.25,.75) {\tiny $1$};
	\node at (1.25,-.75) {\tiny $1$};
\end{tikzpicture}
};
\endxy
\quad .
\]
The $2$-morphisms in this category are matrices of linear combinations of degree-zero $\slnn{2}$ foams, singular surfaces with $1$- and $2$-labeled facets 
between such webs 
generated by identity foams $W \times [0,1]$ of the generating webs $W$ above, 
dots on 1-labeled facets, 
and the basic foams:
\begin{equation}\label{foamgens}
\xy
(0,0)*{
\begin{tikzpicture} [scale=.4,fill opacity=0.2]
	\path [fill=red] (4.25,-.5) to (4.25,2) to [out=165,in=15] (-.5,2) to (-.5,-.5) to 
		[out=0,in=225] (.75,0) to [out=90,in=180] (1.625,1.25) to [out=0,in=90] 
			(2.5,0) to [out=315,in=180] (4.25,-.5);
	\path [fill=red] (3.75,.5) to (3.75,3) to [out=195,in=345] (-1,3) to (-1,.5) to 
		[out=0,in=135] (.75,0) to [out=90,in=180] (1.625,1.25) to [out=0,in=90] 
			(2.5,0) to [out=45,in=180] (3.75,.5);
	\path[fill=yellow] (.75,0) to [out=90,in=180] (1.625,1.25) to [out=0,in=90] (2.5,0);
	\draw [double] (2.5,0) to (.75,0);
	\draw [very thick,directed=.55] (.75,0) to [out=135,in=0] (-1,.5);
	\draw [very thick,directed=.55] (.75,0) to [out=225,in=0] (-.5,-.5);
	\draw [very thick,directed=.55] (3.75,.5) to [out=180,in=45] (2.5,0);
	\draw [very thick,directed=.55] (4.25,-.5) to [out=180,in=315] (2.5,0);
	\draw [very thick, red, directed=.75] (.75,0) to [out=90,in=180] (1.625,1.25);
	\draw [very thick, red] (1.625,1.25) to [out=0,in=90] (2.5,0);
	\draw [very thick] (3.75,3) to (3.75,.5);
	\draw [very thick] (4.25,2) to (4.25,-.5);
	\draw [very thick] (-1,3) to (-1,.5);
	\draw [very thick] (-.5,2) to (-.5,-.5);
	\draw [very thick,directed=.55] (4.25,2) to [out=165,in=15] (-.5,2);
	\draw [very thick, directed=.55] (3.75,3) to [out=195,in=345] (-1,3);
	\node [Apricot, opacity=1]  at (1.625,.5) {\tiny{$2$}};
	\node[red, opacity=1] at (3.5,2.65) {\tiny{$1$}};
	\node[red, opacity=1] at (4,1.75) {\tiny{$1$}};		
\end{tikzpicture}
};
\endxy
\quad , \quad 
\xy
(0,0)*{
\begin{tikzpicture} [scale=.4,fill opacity=0.2]
	\path [fill=red] (4.25,2) to (4.25,-.5) to [out=165,in=15] (-.5,-.5) to (-.5,2) to
		[out=0,in=225] (.75,2.5) to [out=270,in=180] (1.625,1.25) to [out=0,in=270] 
			(2.5,2.5) to [out=315,in=180] (4.25,2);
	\path [fill=red] (3.75,3) to (3.75,.5) to [out=195,in=345] (-1,.5) to (-1,3) to [out=0,in=135]
		(.75,2.5) to [out=270,in=180] (1.625,1.25) to [out=0,in=270] 
			(2.5,2.5) to [out=45,in=180] (3.75,3);
	\path[fill=yellow] (2.5,2.5) to [out=270,in=0] (1.625,1.25) to [out=180,in=270] (.75,2.5);
	\draw [very thick,directed=.55] (4.25,-.5) to [out=165,in=15] (-.5,-.5);
	\draw [very thick, directed=.55] (3.75,.5) to [out=195,in=345] (-1,.5);
	\draw [very thick, red, directed=.75] (2.5,2.5) to [out=270,in=0] (1.625,1.25);
	\draw [very thick, red] (1.625,1.25) to [out=180,in=270] (.75,2.5);
	\draw [very thick] (3.75,3) to (3.75,.5);
	\draw [very thick] (4.25,2) to (4.25,-.5);
	\draw [very thick] (-1,3) to (-1,.5);
	\draw [very thick] (-.5,2) to (-.5,-.5);
	\draw [double] (2.5,2.5) to (.75,2.5);
	\draw [very thick,directed=.55] (.75,2.5) to [out=135,in=0] (-1,3);
	\draw [very thick,directed=.55] (.75,2.5) to [out=225,in=0] (-.5,2);
	\draw [very thick,directed=.55] (3.75,3) to [out=180,in=45] (2.5,2.5);
	\draw [very thick,directed=.55] (4.25,2) to [out=180,in=315] (2.5,2.5);
	\node [Apricot, opacity=1]  at (1.625,2) {\tiny{$2$}};
	\node[red, opacity=1] at (3.5,2.65) {\tiny{$1$}};
	\node[red, opacity=1] at (4,1.75) {\tiny{$1$}};		
\end{tikzpicture}
};
\endxy
\quad , \quad
\xy
(0,0)*{
\begin{tikzpicture} [scale=.4,fill opacity=0.2]
	\path[fill=red] (-.75,4) to [out=270,in=180] (0,2.5) to [out=0,in=270] (.75,4) .. controls (.5,4.5) and (-.5,4.5) .. (-.75,4);
	\path[fill=yellow] (-.75,4) to [out=270,in=180] (0,2.5) to [out=0,in=270] (.75,4) -- (2,4) -- (2,1) -- (-2,1) -- (-2,4) -- (-.75,4);
	\path[fill=red] (-.75,4) to [out=270,in=180] (0,2.5) to [out=0,in=270] (.75,4) .. controls (.5,3.5) and (-.5,3.5) .. (-.75,4);
	\draw[double] (2,1) -- (-2,1);
	\path (.75,1) .. controls (.5,.5) and (-.5,.5) .. (-.75,1); 
	\draw [very thick, red, directed=.7] (-.75,4) to [out=270,in=180] (0,2.5) to [out=0,in=270] (.75,4);
	\draw[very thick] (2,4) -- (2,1);
	\draw[very thick] (-2,4) -- (-2,1);
	\draw[double] (2,4) -- (.75,4);
	\draw[double] (-.75,4) -- (-2,4);
	\draw[very thick,directed=.55] (.75,4) .. controls (.5,3.5) and (-.5,3.5) .. (-.75,4);
	\draw[very thick,directed=.55] (.75,4) .. controls (.5,4.5) and (-.5,4.5) .. (-.75,4);
	\node [Apricot, opacity=1]  at (1.5,3.5) {\tiny{$2$}};
	\node[red, opacity=1] at (-.25,3.375) {\tiny{$1$}};
	\node[red, opacity=1] at (-.25,4.1) {\tiny{$1$}};	
\end{tikzpicture}
};
\endxy
\quad , \quad
\xy
(0,0)*{
\begin{tikzpicture} [scale=.4,fill opacity=0.2]
	\path[fill=red] (-.75,-4) to [out=90,in=180] (0,-2.5) to [out=0,in=90] (.75,-4) .. controls (.5,-4.5) and (-.5,-4.5) .. (-.75,-4);
	\path[fill=yellow] (-.75,-4) to [out=90,in=180] (0,-2.5) to [out=0,in=90] (.75,-4) -- (2,-4) -- (2,-1) -- (-2,-1) -- (-2,-4) -- (-.75,-4);
	\path[fill=red] (-.75,-4) to [out=90,in=180] (0,-2.5) to [out=0,in=90] (.75,-4) .. controls (.5,-3.5) and (-.5,-3.5) .. (-.75,-4);
	\draw[double] (2,-1) -- (-2,-1);
	\path (.75,-1) .. controls (.5,-.5) and (-.5,-.5) .. (-.75,-1); 
	\draw [very thick, red, directed=.7] (.75,-4) to [out=90,in=0] (0,-2.5) to [out=180,in=90] (-.75,-4);
	\draw[very thick] (2,-4) -- (2,-1);
	\draw[very thick] (-2,-4) -- (-2,-1);
	\draw[double] (2,-4) -- (.75,-4);
	\draw[double] (-.75,-4) -- (-2,-4);
	\draw[very thick,directed=.55] (.75,-4) .. controls (.5,-3.5) and (-.5,-3.5) .. (-.75,-4);
	\draw[very thick,directed=.55] (.75,-4) .. controls (.5,-4.5) and (-.5,-4.5) .. (-.75,-4);
	\node [Apricot, opacity=1]  at (1.25,-1.5) {\tiny{$2$}};
	\node[red, opacity=1] at (-.25,-3.25) {\tiny{$1$}};
	\node[red, opacity=1] at (-.25,-4) {\tiny{$1$}};
\end{tikzpicture}
};
\endxy
\end{equation}
modulo isotopy and relations analogous to \eqref{BNrel}. 
Enhanced versions of the skein relations \eqref{BNskein} assign\footnote{Cap and cup tangles with vertical tangencies 
are sent to the appropriate generating webs.} 
a complex to any link:
\begin{equation}\label{eskein}
\left \llbracket
\xy
(0,0)*{
\begin{tikzpicture} [scale=.5]
\draw[very thick, directed=.99] (1,0) to (0,1);
\draw[very thick] (.6,.6) to (1,1);
\draw[very thick,directed=.99] (.4,.4) to (0,0);
\end{tikzpicture}
}
\endxy
\right \rrbracket
=
\left(
q^{-1} \;
\xy
(0,0)*{
\begin{tikzpicture} [scale=.6, rotate=90]
\draw[very thick, directed=.99] (0,0) to [out=45,in=315] (0,1);
\draw[very thick, directed=.99] (1,0) to [out=135,in=225] (1,1);
\end{tikzpicture}
}
\endxy
\xrightarrow{
\xy
(0,0)*{
\begin{tikzpicture} [scale=.2,fill opacity=0.2]
	\path [fill=red] (4.25,2) to (4.25,-.5) to [out=170,in=10] (-.5,-.5) to (-.5,2) to
		[out=0,in=225] (.75,2.5) to [out=270,in=180] (1.625,1.25) to [out=0,in=270] 
			(2.5,2.5) to [out=315,in=180] (4.25,2);
	\path [fill=red] (3.75,3) to (3.75,.5) to [out=190,in=350] (-1,.5) to (-1,3) to [out=0,in=135]
		(.75,2.5) to [out=270,in=180] (1.625,1.25) to [out=0,in=270] 
			(2.5,2.5) to [out=45,in=180] (3.75,3);
	\path[fill=yellow] (2.5,2.5) to [out=270,in=0] (1.625,1.25) to [out=180,in=270] (.75,2.5);
	\draw [very thick] (4.25,-.5) to [out=170,in=10] (-.5,-.5);
	\draw [very thick] (3.75,.5) to [out=190,in=350] (-1,.5);
	\draw [very thick, red] (2.5,2.5) to [out=270,in=0] (1.625,1.25) to [out=180,in=270] (.75,2.5);
	\draw [very thick] (3.75,3) to (3.75,.5);
	\draw [very thick] (4.25,2) to (4.25,-.5);
	\draw [very thick] (-1,3) to (-1,.5);
	\draw [very thick] (-.5,2) to (-.5,-.5);
	\draw [double] (2.5,2.5) to (.75,2.5);
	\draw [very thick] (.75,2.5) to [out=135,in=0] (-1,3);
	\draw [very thick] (.75,2.5) to [out=225,in=0] (-.5,2);
	\draw [very thick] (3.75,3) to [out=180,in=45] (2.5,2.5);
	\draw [very thick] (4.25,2) to [out=180,in=315] (2.5,2.5);		
\end{tikzpicture}
};
\endxy
}
\uwave{
\xy
(0,0)*{
\begin{tikzpicture} [scale=.6, rotate=90]
\draw[very thick, directed=.85] (.5,.75) to (1,1);
\draw[very thick, directed=.85] (.5,.75) to (0,1);
\draw[double] (.5,.75) to (.5,.25);
\draw[very thick, directed=.55] (1,0) to (.5,.25);
\draw[very thick, directed=.55] (0,0) to (.5,.25);
\end{tikzpicture}
}
\endxy} \;
\right) 
\quad , \quad
\left \llbracket
\xy
(0,0)*{
\begin{tikzpicture} [scale=.5]
\draw[very thick, directed=.99] (1,1) to (0,0);
\draw[very thick, directed=.99] (.4,.6) to (0,1);
\draw[very thick] (1,0) to (.6,.4);
\end{tikzpicture}
}
\endxy
\right \rrbracket
=
\left(
\uwave{
\xy
(0,0)*{
\begin{tikzpicture} [scale=.6, rotate=90]
\draw[very thick, directed=.85] (.5,.75) to (1,1);
\draw[very thick, directed=.85] (.5,.75) to (0,1);
\draw[double] (.5,.75) to (.5,.25);
\draw[very thick, directed=.55] (1,0) to (.5,.25);
\draw[very thick, directed=.55] (0,0) to (.5,.25);
\end{tikzpicture}
}
\endxy}
\xrightarrow{
\xy
(0,0)*{
\begin{tikzpicture} [scale=.2,fill opacity=0.2]
	\path [fill=red] (4.25,-.5) to (4.25,2) to [out=170,in=10] (-.5,2) to (-.5,-.5) to 
		[out=0,in=225] (.75,0) to [out=90,in=180] (1.625,1.25) to [out=0,in=90] 
			(2.5,0) to [out=315,in=180] (4.25,-.5);
	\path [fill=red] (3.75,.5) to (3.75,3) to [out=190,in=350] (-1,3) to (-1,.5) to 
		[out=0,in=135] (.75,0) to [out=90,in=180] (1.625,1.25) to [out=0,in=90] 
			(2.5,0) to [out=45,in=180] (3.75,.5);
	\path[fill=yellow] (.75,0) to [out=90,in=180] (1.625,1.25) to [out=0,in=90] (2.5,0);
	\draw [double] (2.5,0) to (.75,0);
	\draw [very thick] (.75,0) to [out=135,in=0] (-1,.5);
	\draw [very thick] (.75,0) to [out=225,in=0] (-.5,-.5);
	\draw [very thick] (3.75,.5) to [out=180,in=45] (2.5,0);
	\draw [very thick] (4.25,-.5) to [out=180,in=315] (2.5,0);
	\draw [very thick, red] (.75,0) to [out=90,in=180] (1.625,1.25) to [out=0,in=90] (2.5,0);
	\draw [very thick] (3.75,3) to (3.75,.5);
	\draw [very thick] (4.25,2) to (4.25,-.5);
	\draw [very thick] (-1,3) to (-1,.5);
	\draw [very thick] (-.5,2) to (-.5,-.5);
	\draw [very thick] (4.25,2) to [out=170,in=10] (-.5,2);
	\draw [very thick] (3.75,3) to [out=190,in=350] (-1,3);	
\end{tikzpicture}
};
\endxy
}
q \;
\xy
(0,0)*{
\begin{tikzpicture} [scale=.6, rotate=90]
\draw[very thick, directed=.99] (0,0) to [out=45,in=315] (0,1);
\draw[very thick, directed=.99] (1,0) to [out=135,in=225] (1,1);
\end{tikzpicture}
}
\endxy \;
\right).
\end{equation}
Passing to a quotient (denoted $\Foam{2}{}^\bullet$ in \cite{QR}) in which we impose 
the relation in equation \eqref{2dots} and applying a suitable representable functor 
gives a complex of finite-dimensional vector spaces, whose homology is Khovanov homology; see \cite{Blan}, \cite[Section 3.1]{LQR1}, or the  
$n=2$ case of \cite{QR} for full details.

In \cite{LQR1} we construct $2$-functors:
\begin{equation}\label{LQRthm}
\cal{U}_Q(\slm) \xrightarrow{\Phi^{m,N}} 2\cat{Foam}
\end{equation}
for each $m\geq 2$ and $N \geq1$, 
where $\cal{U}_Q(\slm)$ denotes the Khovanov-Lauda diagrammatic categorification of quantum $\slm$ \cite{KL,KL2,KL3}, 
in particular the version\footnote{We'll work with the choice of sign data $Q$ from \cite{QR}, \ie $t_{i,i+1}=-1$ and $t_{i+1,i}=1$.} 
described in \cite{CLau}. 
Recall that the 2-category $\cal{U}_Q(\slm)$ has $\slm$ weights $\lambda=(\lambda_1, \ldots, \lambda_{m-1}) \in \Z^{m-1}$ as 
objects, and $1$-morphisms are given by graded, formal direct sums of composites of identity morphisms $\onel$ and 
basic generators $\sE_i \onel \in \Hom(\lambda,\lambda+\alpha_i)$, $\sF_i \onel \in \Hom(\lambda,\lambda-\alpha_i)$,
where $\alpha_i$ is the $i^{\mathrm{th}}$ simple root. The $2$-morphisms in $\cal{U}_Q(\slm)$ are matrices of linear combinations of degree-zero 
dotted string diagrams\footnote{These diagrams should be viewed, both figuratively and literally, as perpendicular to 
planar tangles and webs.} colored by nodes of the $\slm$ Dynkin diagram,
generated by:
\[
\xy
(0,0)*{
\begin{tikzpicture}[scale=.5]
\draw[thick, ->] (0,-1) to (0,1);
\node at (.5,.5) {\tiny$_\lambda$};
\node at (-1,.5) {\tiny$_{\lambda+\alpha_i}$};
\node at (-.25,-1) {\tiny$_i$};
\end{tikzpicture}
};
\endxy
\;\; , \;\;
\xy
(0,0)*{
\begin{tikzpicture}[scale=.5]
\draw[thick, <-] (0,-1) to (0,1);
\node at (.5,.5) {\tiny$_\lambda$};
\node at (-1,.5) {\tiny$_{\lambda-\alpha_i}$};
\node at (-.25,-1) {\tiny$_i$};
\end{tikzpicture}
};
\endxy
\;\; , \;\;
\xy
(0,0)*{
\begin{tikzpicture}[scale=.5]
\draw[thick, ->] (0,-1) to (0,1);
\node at (.5,.5) {\tiny$_\lambda$};
\node at (-1,.5) {\tiny$_{\lambda+\alpha_i}$};
\node at (-.25,-1) {\tiny$_i$};
\node at (0,0) {$\bullet$};
\end{tikzpicture}
};
\endxy
\;\; , \;\;
\xy
(0,0)*{
\begin{tikzpicture}[scale=.5]
\draw[thick, ->] (0,-1) to [out=90,in=270] (1,1);
\draw[thick, ->] (1,-1) to [out=90,in=270] (0,1);
\node at (-.25,-1) {\tiny$_i$};
\node at (1.25,-1) {\tiny$_j$};
\node at (1.25,.25) {\tiny$_\lambda$};
\end{tikzpicture}
};
\endxy
\;\; , \;\;
\xy
(0,0)*{
\begin{tikzpicture}[scale=.5]
\draw[thick, ->] (0,-1) to [out=90,in=180] (.5,0) to [out=0,in=90] (1,-1);
\node at (-.25,-1) {\tiny$_i$};
\node at (1,.5) {\tiny$_\lambda$};
\end{tikzpicture}
};
\endxy
\;\; , \;\;
\xy
(0,0)*{
\begin{tikzpicture}[scale=.5]
\draw[thick, <-] (0,-1) to [out=90,in=180] (.5,0) to [out=0,in=90] (1,-1);
\node at (1.25,-1) {\tiny$_i$};
\node at (1,.5) {\tiny$_\lambda$};
\end{tikzpicture}
};
\endxy
\;\; , \;\;
\xy
(0,0)*{
\begin{tikzpicture}[scale=.5]
\draw[thick, ->] (0,1) to [out=270,in=180] (.5,0) to [out=0,in=270] (1,1);
\node at (-.25,1) {\tiny$_i$};
\node at (1,-.5) {\tiny$_\lambda$};
\end{tikzpicture}
};
\endxy
\;\; , \;\;
\xy
(0,0)*{
\begin{tikzpicture}[scale=.5]
\draw[thick, <-] (0,1) to [out=270,in=180] (.5,0) to [out=0,in=270] (1,1);
\node at (1.25,1) {\tiny$_i$};
\node at (1,-.5) {\tiny$_\lambda$};
\end{tikzpicture}
};
\endxy
\]
modulo local relations. These diagrams should be read as mapping from the sequence of boundary points at the 
bottom, to those at the top, where we translate a sequence of boundary points to a composition of basic 1-morphisms by identifying 
an upward $i$-labeled strand with $\sE_i \onel$ and a downward $i$-labeled strand with $\sF_i \onel$
and using the labeling of the planar regions to indicate the domain and codomain of the 1-morphism, \eg
\[
\xy
(0,0)*{
\begin{tikzpicture}[scale=.5]
\draw[thick, <-] (0,1) to [out=270,in=180] (1,0) to [out=0,in=270] (2,1);
\node at (2.25,1) {\tiny$_k$};
\draw[thick, ->] (0,-1) to [out=90,in=270] (1,1);
\node at (-.25,-1) {\tiny$_i$};
\draw[thick, ->] (1,-1) to [out=90,in=270] (-1,1);
\node at (1.25,-1) {\tiny$_j$};
\node at (2,-.5) {\tiny$\lambda$};
\end{tikzpicture}
};
\endxy
\]
is a $2$-morphism in $\Hom(\sE_i \sE_j \onel, q^d \sE_j \sE_k \sE_i \sF_k \onel)$. 
Here, and throughout, we use the (nonstandard) convention that powers of $q$ denote 
degree shifts of $1$-morphisms in categorified quantum groups; 
the shift in the codomain above depends on the adjacency of the nodes in the Dynkin diagram labeling the strings involved in the crossings.
See \cite{CLau} for full details about $\cal{U}_Q(\slm)$, where the grading shift $q^d$ is instead denoted by $\langle d \rangle$. 
The main result of \cite{KL3} is that $\UcatD_Q(\slm)$, 
the 2-category obtained from $\cal{U}_Q(\slm)$ by taking the Karoubi completion in each $\Hom$-category, 
categorifies $\U_q(\slm)$, the Lusztig idempotented form of quantum $\slm$.

\begin{rem}\label{rem:Ucheck}
We actually show in \cite{LQR1,QR} that the domain of the 2-functors in equation \eqref{LQRthm} (as well as those in equation \eqref{QRthm} below) 
can be extended to the full 2-subcategory $\Ucatc_Q(\slm)$ of $\UcatD_Q(\slm)$ generated by the divided power 1-morphisms 
$\cal{E}_{i}^{(a)} \onel$ and $\cal{F}_{i}^{(a)} \onel$ (see \cite{Lau1} and \cite{KLMS} for details on divided powers in the $\slnn{2}$ case). 
Since the vertical trace used later in this paper cannot distinguish between these 2-categories, we'll mostly work with the non-idempotent complete version for simplicity. 
However, in a handful of instances, our results can only be accurately stated in terms of $\Ucatc_Q(\slm)$; 
the reader unfamiliar with this version of the categorified quantum group can see \cite{LQR1} and \cite{QR} for full details.
\end{rem}

The 2-functor $\Phi^{m,N}$ in equation \eqref{LQRthm} 
is a combinatorial version of the categorical geometric skew Howe duality of Cautis-Kamnitzer-Licata \cite{CKL} and
was motivated by (and categorifies) the diagrammatic skew Howe duality of Cautis-Kamnitzer-Morrison \cite{CKM}.
It is given by first sending an $\slm$ weight $\lambda = (\lambda_1,\ldots,\lambda_{m-1})$ to the sequence 
$[ a_1,\ldots,a_m ] \in \Z^m$ so that $\sum a_i = N$ and $a_i - a_{i+1} = \lambda_i$ when a solution exists for $a_i \in \{0,1,2\}$
and then by deleting the zero entries in this sequence to give an object in $\Foam{2}{}$;
if no such sequence $[ a_1,\ldots,a_m ]$ exists then the $\slm$ weight is mapped to the zero object.
The following schematic specifies $\Phi^{m,N}$ on generating $1$-morphisms:
\[
\onel \mapsto
\xy
(0,0)*{
\begin{tikzpicture} [scale=.6]
\draw [very thick, directed=.55] (3,0) -- (.5,0);
\draw [very thick, directed=.55] (3,1) -- (.5,1);
\draw [very thick, directed=.55] (3,1.5) -- (.5,1.5);
\node at (3.6,0) {\tiny$a_m$};
\node at (3.6,1) {\tiny$a_2$};
\node at (3.6,1.5) {\tiny$a_1$};
\node at (1.75,.65) {\tiny$\cdot$};
\node at (1.75,.5) {\tiny$\cdot$};
\node at (1.75,.35) {\tiny$\cdot$};
\end{tikzpicture}};
\endxy
\;\; , \;\;
\sE_i \onel \mapsto
\xy
(0,0)*{
\begin{tikzpicture} [scale=.6]
\draw [very thick, directed=.55] (3,0) -- (2,0);
\draw[very thick, directed=.55] (2,0) -- (0,0);
\draw [very thick, directed=.55] (3,1) -- (1,1);
\draw[very thick, directed=.55] (1,1) -- (0,1);
\draw [very thick, directed=.55] (2,0) -- (1,1);
\node at (3.6,0) {\tiny$a_{i+1}$};
\node at (3.6,1) {\tiny$a_i$};
\node at (-1,0) {\tiny$a_{i+1} {-} 1$};
\node at (-1,1) {\tiny$a_i {+} 1$};
\node at (2,.5) {\tiny$1$};
\end{tikzpicture}};
\endxy
\;\; , \;\;
\sF_i \onel \mapsto
\xy
(0,0)*{
\begin{tikzpicture} [scale=.6]
\draw [very thick, directed=.55] (3,0) -- (1,0);
\draw[very thick, directed=.55] (1,0) -- (0,0);
\draw [very thick, directed=.55] (3,1) -- (2,1);
\draw[very thick, directed=.55] (2,1) -- (0,1);
\draw [very thick, directed=.55] (2,1) -- (1,0);
\node at (3.6,0) {\tiny$a_{i+1}$};
\node at (3.6,1) {\tiny$a_i$};
\node at (-1,0) {\tiny$a_{i+1} {+} 1$};
\node at (-1,1) {\tiny$a_i {-} 1$};
\node at (2,.5) {\tiny$1$};
\end{tikzpicture}};
\endxy
\]
where $0$-labeled edges are understood to be deleted. The $2$-functor is given on generating $2$-morphisms 
by sending the identity $2$-morphism of $\onel$ to parallel vertical sheets (of the appropriate type) and on 
the generating $2$-morphisms by viewing them as cross-sections of facets attached between the vertical sheets
(and deleting $0$-labeled facets), \eg
\[
\Phi_n \left(
\;
\xy
(0,0)*{
\begin{tikzpicture}[scale=.5]
\draw[thick, <-] (0,-1) to [out=90,in=180] (.5,0) to [out=0,in=90] (1,-1);
\node at (1.25,-1) {\tiny$_i$};
\node at (1,.5) {\tiny$_\lambda$};
\end{tikzpicture}
};
\endxy
\right) = \quad \capFEfoam{a_i}{a_{i+1}} \qquad .
\]
For more details, including a complete description of the 2-functor on 2-morphisms, see \cite{LQR1} and \cite{QR}.

We can describe the family of $2$-functors $\{\Phi^{m,N}\}_{N \geq 1}$ more succinctly using the categorified quantum group $\cal{U}_Q(\glm)$ from \cite{MSV2}.
This 2-category admits a description almost identical to $\cal{U}_Q(\slm)$, but objects are instead given by $\glm$ weights $\mathbf{a} = [a_1,\ldots,a_m] \in \Z^m$ 
and the (non-identity) generating $1$-morphisms are $\sE_i \onenn{\mathbf{a}} \in \Hom(\mathbf{a},\mathbf{a}+\epsilon_i)$ and 
$\sF_i \onenn{\mathbf{a}} \in \Hom(\mathbf{a},\mathbf{a}-\epsilon_i)$ for $\epsilon_i = [0,\ldots,1,-1,\ldots,0]$.
This then gives a $2$-functor $\cal{U}_Q(\glm) \xrightarrow{\Phi} 2\cat{Foam}$ defined on objects by simply forgetting the $0$-entries of a $\glm$ weight 
when each entry of the weight lies in $\{0,1,2\}$, and sending the weight to the zero object otherwise. 
This $2$-functor is given on $1$- and $2$-morphisms analogously to the $\slm$ case.

Note that $\Phi$ factors through the ``$2$-bounded" quotient of $\cal{U}_Q(\glm)$ 
where we've killed all weights with entries not lying in $\{0,1,2\}$; we'll denote this quotient by $\cal{U}_Q(\glm)^{0\leq2}$. 
We'll also later consider the more general $n$-bounded quotient $\cal{U}_Q(\glm)^{0\leq n}$ for $n \geq 2$, where we kill weights with entries not in the set $\{0,\dots,n\}$.
 We denote the limiting $2$-category, where we only quotient by $\glm$ weights containing negative integers, by $\cal{U}_Q(\glm)^{ 0\leq}$.

\begin{rem}\label{glmVslm}
Since the $\Hom$-categories in $\cal{U}_Q(\slm)$ and $\cal{U}_Q(\glm)$ are only non-trivial between weights which differ by the respective root lattices
(generated by the $\alpha_i$'s and $\epsilon_i$'s, respectively), we have that both 2-categories are given as the disjoint unions of 2-subcategories 
corresponding to root lattice cosets in the weight lattice. Using this, we have an identification
\[
\cal{U}_Q(\glm) \cong \coprod_{\Z} \cal{U}_Q(\slm)
\]
where a $\glm$ weight $[a_1,\ldots,a_m]$ on the left-hand side corresponds to the $\slm$ weight $(\lambda_1,\ldots,\lambda_{m-1})$ 
with $\lambda_i = a_i - a_{i+1}$ in the $\left\lfloor \frac{1}{m} \sum a_i \right\rfloor^{\mathrm{th}}$ copy on the right-hand side. 
We'll use this identification, and the corresponding one at the decategorified level, to import constructions from the ``$\slm$-side" to the 
``$\glm$-side," \eg we immediately obtain $2$-categories $\Ucatc_Q(\glm)$ and $\UcatD_Q(\glm)$. 
Similarly, the $2$-functor $\Phi$ is given by $\Phi^{m,N}$ acting on the (root lattice coset 2-subcategory of the) 
$\left\lfloor \frac{N}{m} \right\rfloor^{\mathrm{th}}$ summand on the right-hand side.
\end{rem}

\subsection{Annular cobordism categories and saKh}

There is a natural extension of the Bar-Natan category $\BN{0}{0}$ to the annulus $\cal{A} = S^1 \times [0,1]$, which we'll denote by $\cal{ABN}$. 
Objects in $\cal{ABN}$ are graded direct sums of disjoint unions of simple closed curves in $\cal{A}$ and morphisms are 
matrices of linear combinations of degree-zero dotted cobordisms embedded in $\cal{A} \times [0,1]$ between such objects, modulo isotopy and the relations in equation \eqref{BNrel}. 
Note that, unlike in the case of $\BN{0}{0}$, not all objects in $\cal{ABN}$ are isomorphic to direct sums of shifts of the empty curve.

The skein formulae \eqref{BNskein} assign a complex in $\cal{ABN}$ to any link $\cal{L} \subseteq \cal{A} \times [0,1]$; however, there is no longer a canonical choice of 
representable functor giving a complex of vector spaces of which we can take homology. 
Rather, the sutured annular Khovanov homology $\saKh(\cal{L})$ of Asaeda-Przytycki-Sikora can be interpreted as 
taking the homology of the resulting complex after applying a functor from $\cal{ABN}$ to the category of 
bi-graded vector spaces (with the grading denoted by powers of $q$ and $w$).
After imposing the additional relation \eqref{2dots}, 
this functor is given as in the non-annular case on the subcategory consisting of 
curves and cobordisms which don't interact with the annular core, and placing these vector spaces in zero $w$-degree. 
This functor is determined on all of $\cal{ABN}$ by the requirement that it is monoidal, sends the ``essential circle"
$
\xy
(0,0)*{
\begin{tikzpicture}[scale=.3]
\draw[gray] (0,0) circle (2);
\draw[very thick] (0,0) circle (1);
\draw[gray] (0,0) circle (.25);
\end{tikzpicture}
};
\endxy
$
to the graded vector space $V_1:=w\C \oplus w^{-1}\C = \mathrm{span}\{v_{+},v_{-} \}$, 
and maps the following cobordisms as indicated\footnote{A choice should be made to identify each circle with a particular tensor factor for the second cobordism.
Additionally, these maps differ slightly from those appearing previously in the literature \cite{APS,Rob,GW}, 
but are equal to them up to a change of basis and shifts in grading conventions.}:
\begin{equation}\label{safunctor}
\begin{aligned}
\xy
(0,0)*{
\begin{tikzpicture}[scale=.4, fill opacity=.2]
\path[fill=red] (1,4) arc (360:180: 1 and .5) to (-1,0) arc (180:360:1 and .5);
\path[fill=red] (1,4) arc (0:180: 1 and .5) to (-1,0) arc (180:0:1 and .5);
\draw[gray] (0,0) ellipse (2 and 1);
\draw[very thick] (0,0) ellipse (1 and .5);
\draw[gray] (0,0) circle (.25 and .125);
\draw[gray] (.25,0) to (.25,4);
\draw[gray] (-.25,0) to (-.25,4);
\draw[very thick] (1,0) to (1,4);
\draw[very thick] (-1,0) to (-1,4);
\draw[gray] (2,0) to (2,4);
\draw[gray] (-2,0) to (-2,4);
\node[opacity=1] at (.625,2) {$\bullet$};
\draw[gray] (0,4) ellipse (2 and 1);
\draw[very thick] (0,4) ellipse (1 and .5);
\draw[gray] (0,4) circle (.25 and .125);
\end{tikzpicture}
};
\endxy
\;\; \mapsto 0
\qquad , \qquad
\xy
(0,0)*{
\begin{tikzpicture}[scale=.4, fill opacity=.2]
\path[fill=red] (1.5,0) to [out=90,in=315] (1.125,2) arc (360:180: 1.125 and .5625) to [out=225,in=90] (-1.5,0) arc (180:360:1.5 and .75);
\path[fill=red] (1.5,0) to [out=90,in=315] (1.125,2) arc (0:180: 1.125 and .5625) to [out=225,in=90] (-1.5,0) arc (180:0:1.5 and .75);
\path[fill=red] (.75,0) to [out=90,in=225] (1.125,2) arc (360:180: 1.125 and .5625) to [out=315,in=90] (-.75,0) arc (180:360:.75 and .375);
\path[fill=red] (.75,0) to [out=90,in=225] (1.125,2) arc (0:180: 1.125 and .5625) to [out=315,in=90] (-.75,0) arc (180:0:.75 and .375);
\draw[gray] (0,0) ellipse (2 and 1);
\draw[very thick] (0,0) ellipse (.75 and .375);
\draw[very thick] (0,0) ellipse (1.5 and .75);
\draw[gray] (0,0) circle (.25 and .125);
\draw[thick] (1.125,2) arc (0:180:1.125 and .5625);
\draw[gray] (.25,0) to (.25,4);
\draw[gray] (-.25,0) to (-.25,4);
\draw[very thick] (1.5,0) to [out=90,in=315] (1.125,2) to [out=225,in=90] (.75,0);
\draw[very thick] (-1.5,0) to [out=90,in=225] (-1.125,2) to [out=315,in=90] (-.75,0);
\draw[thick] (1.125,2) arc (360:180:1.125 and .5625);
\draw[gray] (2,0) to (2,4);
\draw[gray] (-2,0) to (-2,4);
\draw[gray] (0,4) circle (2 and 1);
\draw[gray] (0,4) circle (.25 and .125);
\end{tikzpicture}
};
\endxy
\mapsto
\left \{
\begin{array}{c}
V_1 \otimes V_1 \longrightarrow  \C \\
v_{\pm} \otimes v_{\pm} \mapsto 0 \\
v_{\pm} \otimes v_{\mp} \mapsto \pm 1
\end{array}
\right \} \\
\xy
(0,0)*{
\begin{tikzpicture}[scale=.4, fill opacity=.2]
\path[fill=red] (1.5,4) to [out=270,in=45] (1.125,2) arc (360:180: 1.125 and .5625) to [out=135,in=90] (-1.5,4) arc (180:360:1.5 and .75);
\path[fill=red] (1.5,4) to [out=270,in=45] (1.125,2) arc (0:180: 1.125 and .5625) to [out=135,in=270] (-1.5,4) arc (180:0:1.5 and .75);
\path[fill=red] (.75,4) to [out=270,in=135] (1.125,2) arc (360:180: 1.125 and .5625) to [out=45,in=270] (-.75,4) arc (180:360:.75 and .375);
\path[fill=red] (.75,4) to [out=270,in=135] (1.125,2) arc (0:180: 1.125 and .5625) to [out=45,in=270] (-.75,4) arc (180:0:.75 and .375);
\draw[gray] (0,0) circle (2 and 1);
\draw[gray] (0,0) circle (.25 and .125);
\draw[thick] (1.125,2) arc (0:180:1.125 and .5625);
\draw[gray] (.25,0) to (.25,4);
\draw[gray] (-.25,0) to (-.25,4);
\draw[very thick] (1.5,4) to [out=270,in=45] (1.125,2) to [out=135,in=270] (.75,4);
\draw[very thick] (-1.5,4) to [out=270,in=135] (-1.125,2) to [out=45,in=270] (-.75,4);
\draw[thick] (1.125,2) arc (360:180:1.125 and .5625);
\draw[gray] (2,0) to (2,4);
\draw[gray] (-2,0) to (-2,4);
\draw[gray] (0,4) ellipse (2 and 1);
\draw[very thick] (0,4) ellipse (.75 and .375);
\draw[very thick] (0,4) ellipse (1.5 and .75);
\draw[gray] (0,4) circle (.25 and .125);
\end{tikzpicture}
};
\endxy 
\mapsto
\left \{
\begin{array}{c}
\C \longrightarrow V_1 \otimes V_1  \\
1 \mapsto v_{+} \otimes v_{-} - v_{-} \otimes v_{+}
\end{array}
\right \} .
\end{aligned}
\end{equation}
In \cite{GLW}, Grigsby-Licata-Wehrli prove that this actually describes a monoidal functor $\cal{ABN} \to \cat{gr}\Rep(\slnn{2})$, where $\cat{gr}\Rep(\slnn{2})$ is 
a version of the category of finite-dimensional $\slnn{2}$ representations, which is also equipped with a formal $q$-degree.
This functor maps the empty curve to the trivial representation $V_0$ and hence sends the ``trivial circle"
\[
\xy
(0,0)*{
\begin{tikzpicture}[scale=.3]
\draw[gray] (0,0) circle (2);
\draw[very thick] (-1,0) circle (.5);
\draw[gray] (0,0) circle (.25);
\end{tikzpicture}
};
\endxy
\mapsto
qV_0 \oplus q^{-1}V_0
\]
and sends an essential circle to the fundamental $\slnn{2}$ representation $V_1$.

Again, in order to make contact with the theory of categorified quantum groups (and their trace decategorifications), we must introduce an enhanced version 
of $\cal{ABN}$. Let $\AFoam$ be the category in which objects are graded, direct sums of annular closures of left-directed $\slnn{2}$ webs, 
\ie the trivalent graphs obtained by gluing the matching endpoints of left-directed $\slnn{2}$ webs together, around the core of $\cal{A}$, \eg
\[
\xy
(0,0)*{
\begin{tikzpicture}[scale=.5]
\draw[double] (3,0) to (2.25,0);
\draw[very thick, directed=.55] (2.25,0) to [out=225,in=315] (.75,0);
\draw[very thick, directed=.55] (2.25,0) to (1.75,.5);
\draw[double] (1.75,.5) to (1.25,.5);
\draw[very thick, directed=.55] (1.25,.5) to (.75,0);
\draw[double] (.75,0) to (0,0);
\draw[very thick, directed=.55] (3,1) to [out=180,in=45] (1.75,.5);
\draw [very thick,->] (1.25,.5) to [out=135,in=0] (0,1);
\end{tikzpicture}
};
\endxy
\in 1\mathrm{Mor}(2\cat{Foam})
\qquad
\leadsto
\qquad
\xy
(0,0)*{
\begin{tikzpicture}[scale=.5]
\draw[gray] (0,0) circle (2);
\draw[gray] (0,0) circle (.125);
\draw[very thick, directed=.55] (.75,-1.25) to [out=225,in=315] (-.75,-1.25);
\draw[very thick, directed=.65] (.75,-1.25) to (.25,-.75);
\draw[double] (.25,-.75) to (-.25,-.75);
\draw[very thick, directed=.65] (-.25,-.75) to (-.75,-1.25);
\draw[very thick, directed=.55] (-.75,0) arc (180:0:.75);
\draw[double] (-1.5,0) arc (180:0:1.5);
\draw[very thick] (-.25,-.75) to [out=150,in=270] (-.75,0);
\draw[very thick] (.75,0) to [out=270,in=30] (.25,-.75);
\draw[double] (-.75,-1.25) to [out=180,in=270] (-1.5,0);
\draw[double] (1.5,0) to [out=270,in=0] (.75,-1.25);
\end{tikzpicture}
};
\endxy
\in
\mathrm{Ob}(\AFoam) .
\]

Morphisms in $\AFoam$ are given by matrices of linear combinations of degree-zero $\slnn{2}$ foams between such webs, embedded in $\cal{A} \times [0,1]$, \eg
\begin{equation}\label{AFoamex}
\xy
(0,0)*{
\begin{tikzpicture}[scale=.4, fill opacity=.2]
\path[fill=red] (1.5,0) to [out=90,in=315] (1.125,2) arc (360:180: 1.125 and .5625) to [out=225,in=90] (-1.5,0) arc (180:360:1.5 and .75);
\path[fill=red] (1.5,0) to [out=90,in=315] (1.125,2) arc (0:180: 1.125 and .5625) to [out=225,in=90] (-1.5,0) arc (180:0:1.5 and .75);
\path[fill=red] (.75,0) to [out=90,in=225] (1.125,2) arc (360:180: 1.125 and .5625) to [out=315,in=90] (-.75,0) arc (180:360:.75 and .375);
\path[fill=red] (.75,0) to [out=90,in=225] (1.125,2) arc (0:180: 1.125 and .5625) to [out=315,in=90] (-.75,0) arc (180:0:.75 and .375);
\path[fill=yellow] (1.125,4) arc (360:180: 1.125 and .5625) to (-1.125,2) arc (180:360:1.125 and .5625);
\path[fill=yellow] (1.125,4) arc (0:180: 1.125 and .5625) to (-1.125,2) arc (180:0:1.125 and .5625);
\draw[gray] (0,0) ellipse (2 and 1);
\draw[very thick] (0,0) ellipse (.75 and .375);
\draw[very thick] (0,0) ellipse (1.5 and .75);
\draw[gray] (0,0) circle (.25 and .125);
\draw[red, very thick] (1.125,2) arc (0:180:1.125 and .5625);
\draw[gray] (.25,0) to (.25,4);
\draw[gray] (-.25,0) to (-.25,4);
\draw[very thick] (1.5,0) to [out=90,in=315] (1.125,2) to [out=225,in=90] (.75,0);
\draw[very thick] (-1.5,0) to [out=90,in=225] (-1.125,2) to [out=315,in=90] (-.75,0);
\draw[red, very thick] (1.125,2) arc (360:180:1.125 and .5625);
\draw[gray] (2,0) to (2,4);
\draw[gray] (-2,0) to (-2,4);
\draw (1.125,2) to (1.125,4);
\draw (-1.125,2) to (-1.125,4);
\draw[gray] (0,4) circle (2 and 1);
\draw[gray] (0,4) circle (.25 and .125);
\draw[double] (0,4) circle (1.125 and .5625);
\end{tikzpicture}
};
\endxy
\qquad , \qquad
\xy
(0,0)*{
\begin{tikzpicture}[scale=.4, fill opacity=.2]
\path[fill=red] (1.5,4) to [out=270,in=45] (1.125,2) arc (360:180: 1.125 and .5625) to [out=135,in=90] (-1.5,4) arc (180:360:1.5 and .75);
\path[fill=red] (1.5,4) to [out=270,in=45] (1.125,2) arc (0:180: 1.125 and .5625) to [out=135,in=270] (-1.5,4) arc (180:0:1.5 and .75);
\path[fill=red] (.75,4) to [out=270,in=135] (1.125,2) arc (360:180: 1.125 and .5625) to [out=45,in=270] (-.75,4) arc (180:360:.75 and .375);
\path[fill=red] (.75,4) to [out=270,in=135] (1.125,2) arc (0:180: 1.125 and .5625) to [out=45,in=270] (-.75,4) arc (180:0:.75 and .375);
\path[fill=yellow] (1.125,0) arc (360:180: 1.125 and .5625) to (-1.125,2) arc (180:360:1.125 and .5625);
\path[fill=yellow] (1.125,0) arc (0:180: 1.125 and .5625) to (-1.125,2) arc (180:0:1.125 and .5625);
\draw[gray] (0,0) circle (2 and 1);
\draw[gray] (0,0) circle (.25 and .125);
\draw[double] (0,0) circle (1.125 and .5625);
\draw (1.125,2) to (1.125,0);
\draw (-1.125,2) to (-1.125,0);
\draw[red, very thick] (1.125,2) arc (0:180:1.125 and .5625);
\draw[gray] (.25,0) to (.25,4);
\draw[gray] (-.25,0) to (-.25,4);
\draw[very thick] (1.5,4) to [out=270,in=45] (1.125,2) to [out=135,in=270] (.75,4);
\draw[very thick] (-1.5,4) to [out=270,in=135] (-1.125,2) to [out=45,in=270] (-.75,4);
\draw[red, very thick] (1.125,2) arc (360:180:1.125 and .5625);
\draw[gray] (2,0) to (2,4);
\draw[gray] (-2,0) to (-2,4);
\draw[gray] (0,4) ellipse (2 and 1);
\draw[very thick] (0,4) ellipse (.75 and .375);
\draw[very thick] (0,4) ellipse (1.5 and .75);
\draw[gray] (0,4) circle (.25 and .125);
\end{tikzpicture}
};
\endxy
\qquad , \qquad \text{or} \qquad
\xy
(0,0)*{
\begin{tikzpicture}[scale=.4, fill opacity=.2]
\path[fill=yellow] (1.5,4) arc (0:180:1.5 and .75) to (-1.5,0) arc (180:0:1.5 and .75);
\path[fill=yellow] (1.5,4) to [out=270,in=0] (.75,3.5) to [out=150,in=30] (-.75,3.5) to [out=180,in=270] (-1.5,4) to
	(-1.5,0) arc (180:360:1.5 and .75);
\path[fill=red] (-.75,3.5) to [out=270,in=180] (0,2) to [out=0,in=270] (.75,3.5) to [out=150,in=30] (-.75,3.5);
\path[fill=red] (-.75,3.5) to [out=270,in=180] (0,2) to [out=0,in=270] (.75,3.5) to [out=210,in=330] (-.75,3.5);
\draw[gray] (0,0) circle (2 and 1);
\draw[gray] (0,0) circle (.25 and .125);
\draw[double] (0,0) circle (1.5 and .75);
\draw[gray] (.25,0) to (.25,4);
\draw[gray] (-.25,0) to (-.25,4);
\draw[gray] (2,0) to (2,4);
\draw[gray] (-2,0) to (-2,4);
\draw[very thick, red] (-.75,3.5) to [out=270,in=180] (0,2) to [out=0,in=270] (.75,3.5);
\draw (1.5,0) to (1.5,4);
\draw (-1.5,0) to (-1.5,4);
\draw[gray] (0,4) ellipse (2 and 1);
\draw[gray] (0,4) circle (.25 and .125);
\draw[very thick] (.75,3.5) to [out=150,in=30] (-.75,3.5);
\draw[very thick] (.75,3.5) to [out=210,in=330] (-.75,3.5);
\draw[double] (-1.5,4) arc (180:0:1.5 and .75);
\draw[double] (1.5,4) to [out=270,in=0] (.75,3.5);
\draw[double] (-.75,3.5) to [out=180,in=270] (-1.5,4);
\end{tikzpicture}
};
\endxy
\end{equation}
again modulo isotopy and local relations. In equation \eqref{AFoamex}, the first two foams should be viewed as shorthand for the annular 
foams given by the composition of the annular closures of generating foams in \eqref{foamgens}.
Note that $\AFoam$ has a natural monoidal structure, given by gluing the outermost circle of one annulus to the 
innermost circle of the other.

The skein relations \eqref{eskein} assign a complex in $\AFoam$ to any link $\cal{L} \subset \cal{A} \times [0,1]$, and an analog of the above functor gives $\saKh(\cal{L})$. 
In Section \ref{sH}, we'll show how this functor arises naturally by considering trace decategorifications of $\cal{U}_Q(\glm)$ and skew Howe duality.

%
\section{Trace decategorification}\label{traces}
%

In this section, we'll discuss trace decategorifications of linear 2-categories, in particular the vertical and horizontal traces of categorified quantum groups and 
foam 2-categories. See \cite{BHLZ,BGHL,BHLW} for a detailed study of the vertical trace decategorification of $\cal{U}_Q(\slm)$.

\subsection{Vertical trace}

Recall from \cite{BHLZ} that the trace of a small linear category $\mathbf{C}$ is the abelian group given by 
\[
\Tr(\cat{C}) := \bigoplus_{c \in \mathrm{Ob}(\mathbf{C})} \Hom_{\mathbf{C}}(c,c) \bigg/ \mathrm{span}\{fg - gf\}
\]
where the span is taken over all pairs $f \in \Hom_{\mathbf{C}}(c_1,c_2)$ and $g \in \Hom_{\mathbf{C}}(c_2,c_1)$. 
This directly extends to give a notion of trace for linear $2$-categories; recall that if $\cal{C}$ is a linear $2$-category and 
$c_1, c_2$ are objects in $\cal{C}$ then $\Hom_{\cal{C}}(c_1,c_2)$ is itself a linear category.
\begin{defn} The \emph{vertical trace} of a linear $2$-category $\cal{C}$ is the linear category $\vTr(\cal{C})$ whose objects 
are the same as those in $\cal{C}$ and with morphisms determined by
\[
\Hom_{\vTr(\cal{C})}(c_1,c_2) := \Tr \left( \Hom_{\cal{C}}(c_1,c_2) \right).
\]
\end{defn}
Given a $2$-morphism $\mathsf{D}$ in $\cal{C}$ whose domain and codomain $1$-morphisms agree, we'll denote the corresponding 
morphism in $\vTr(\cal{C})$ by $\vTr(\mathsf{D})$.
If the 2-category $\cal{C}$ is presented graphically via string diagrams, such as $\cal{U}_Q(\slm)$, the vertical trace admits a topological interpretation. 
Morphisms in $\vTr(\cal{C})$ are described by 2-morphisms in $\cal{C}$ with equal domain and codomain 1-morphisms, closed around a 
cylinder\footnote{In \cite{BGHL}, this cylinder is flattened to an annulus, but we'll not do this in order to avoid confusion with 
annular $\slnn{2}$ webs, and to highlight the relation with the horizontal trace.}, \eg 
\begin{equation}\label{vTrex}
\vTr\left(
\xy
(0,0)*{
\begin{tikzpicture}[scale=.5]
\draw[thick, ->] (0,-1) to [out=90,in=270] (1,1);
\draw[thick, ->] (1,-1) to [out=90,in=270] (0,1);
\node at (1.25,-.25) {\tiny$_\lambda$};
\node at (-.3,-.25) {\tiny$_{\lambda{+}4}$};
\node at (.25,.25) {$\bullet$};
\end{tikzpicture}
};
\endxy
\right) = \;
\xy
(0,0)*{
\begin{tikzpicture}[scale=.3]
\draw[gray] (-5,0) circle (1 and 2);
\draw[gray] (-5,2) to (4,2);
\draw[gray] (-5,-2) to (4,-2);
\draw[thick, dashed] (-1,2) arc (90:270:1 and 2); 
\draw[thick, dashed] (1,2) arc (90:270:1 and 2);
\draw[thick, directed=.85] (1,-2) to [out=0,in=0] (-1,2);
\draw[thick, directed=.85] (-1,-2) to [out=0,in=0] (1,2);
\draw[gray] (4,-2) arc (-90:90:1 and 2);
\draw[gray, dashed] (4,-2) arc (270:90:1 and 2);
\node at (2.5,1) {\tiny$\lambda$};
\node at (-3,1) {\tiny$\lambda{+}4$};
\node at (.625,1) {$\bullet$};
\end{tikzpicture}
};
\endxy
\end{equation}
in $\vTr(\cal{U}_Q(\slnn{2}))$. In this description of $\vTr(\cal{U}_Q(\slm))$, we must also specify a grading shift\footnote{The category $\vTr(\cal{U}_Q(\slm))$ is not graded
\emph{sensu stricto}, since there is no grading (shift) on objects.} $q^d$ on each cylindrical diagram, corresponding 
to the shift in degree of the (co)domain 1-morphism of the corresponding $2$-endomorphism in $\cal{U}_Q(\slm)$; if no such grading shift appears, as in 
equation \eqref{vTrex}, the diagram is understood to be in degree zero.
The relations on diagrams are extended to allow isotopy around the cylinder; composition is given by gluing two such cylinders together along a boundary.
In \cite{BHLZ,BGHL,BHLW}, the authors show that $\U_q(\slm) \cong \vTr(\UcatD_Q(\slm)) \cong \vTr(\cal{U}_Q(\slm))$, with the isomorphism determined by
\begin{equation}\label{grvTr}
q^k E_i 1_\l \mapsto
q^k \;
\xy
(0,0)*{
\begin{tikzpicture}[scale=.3]
\draw[gray] (-4,0) circle (1 and 2);
\draw[gray] (-4,2) to (3,2);
\draw[gray] (-4,-2) to (3,-2);
\draw[thick, directed=.75] (0,-2) arc (-90:90:1 and 2);
\draw[dashed, thick] (0,-2) arc (270:90:1 and 2);
\draw[gray] (3,-2) arc (-90:90:1 and 2);
\draw[gray, dashed] (3,-2) arc (270:90:1 and 2);
\node at (2.75,.75) {$_\lambda$};
\node at (.25,-1.25) {\tiny$i$};
\end{tikzpicture}
};
\endxy \;
\quad , \quad
q^k F_j 1_\l \mapsto 
q^k \;
\xy
(0,0)*{
\begin{tikzpicture}[scale=.3]
\draw[gray] (-4,0) circle (1 and 2);
\draw[gray] (-4,2) to (3,2);
\draw[gray] (-4,-2) to (3,-2);
\draw[thick, rdirected=.35] (0,-2) arc (-90:90:1 and 2);
\draw[dashed, thick] (0,-2) arc (270:90:1 and 2);
\draw[gray] (3,-2) arc (-90:90:1 and 2);
\draw[gray, dashed] (3,-2) arc (270:90:1 and 2);
\node at (2.75,.75) {$_\lambda$};
\node at (.25,1.25) {\tiny$j$};
\end{tikzpicture}
};
\endxy \;\; .
\end{equation}

A more interesting vertical trace results by considering a non-graded variant of $\cal{U}_Q(\slm)$. Let $\cal{U}^*_Q(\slm)$ be the version of $\cal{U}_Q(\slm)$ in which 
all of the different grading shifts of each 1-morphism are identified or, equivalently, where there are no grading shifts of 1-morphisms and 2-morphisms are not required 
to be degree-zero. In \cite{BHLZ,BHLW}, the vertical trace of this 2-category is related to the current algebra $\curm$, the universal enveloping algebra of 
$\slm[t]:= \slm \otimes \C[t]$.
Let
\[a_{ij} = 
\begin{cases}
2 & \text{if }i=j \\
-1 & \text{if }i=j\pm1 \\
0 & \text{else}
\end{cases} 
\] 
be the Cartan matrix of $\slm$.

\begin{defn} 
The \emph{current algebra} $\curm$ of $\slm$ is the $\C$-algebra generated by $E_{i,r}$, $F_{i,r}$ and $H_{i,r}$, for $1\leq i \leq m-1$ and $r\in \N$, subject to the following relations:
\begin{enumerate}
\item[\textbf{1.}]
$[H_{i,r},H_{j,s}]=0 \;\; , \;\; [H_{i,r},E_{j,s}] = a_{ij} E_{j,r+s} \;\; , \;\; [H_{i,r},F_{j,s}] = -a_{ij} F_{j,r+s}$ \label{ca1} \\
\item[\textbf{2.}]
$[E_{i,r+1},E_{j,s}]=[E_{i,r},E_{j,s+1}] \;\; , \;\; [F_{i,r+1},F_{j,s}]=[F_{i,r},F_{j,s+1}] \;\; , \;\; [E_{i,r},F_{j,s}]=\delta_{i,j}H_{i,r+s}$  \\
\item[\textbf{3.}]
for $|i-j| \neq1$: \;\; $E_{i,r}E_{j,s}=E_{j,s}E_{i,r} \;\; , \;\; F_{i,k}F_{j,l}=F_{j,l}F_{i,k}$ \\
\item[\textbf{4.}]
for $|i-j| = 1$: \;\; $[E_{i,r} , [E_{i,s} , E_{j,t} ] ] = 0 = [F_{i,r} , [F_{i,s} , F_{j,t} ] ]$.
\end{enumerate}
\end{defn}

\begin{rem}
Our relation $\mathbf{1}$ is the same as relations $\mathbf{C1}$, $\mathbf{C2}$, and $\mathbf{C3}$ from \cite{BHLW}, and 
our relation $\mathbf{2}$ is their $\mathbf{C4}$ and $\mathbf{C5}$. 
Our relation $\mathbf{3}$ is their relation $\mathbf{C6a}$ when $|i-j| > 1$ 
(and follows from our relation $\mathbf{2}$ when $i=j$ and when working over a field), and our relation 
$\mathbf{4}$ is their relation $\mathbf{C6b}$.
\end{rem}

The \emph{Lusztig idempotented form} $\dcurm$ of the current algebra is the additive category with objects $\slm$ weights $\lambda \in \Z^{m-1}$ 
and with morphisms generated by $H_{i,r}1_\l \in \End(\l)$, $E_{i,r} 1_\l \in \Hom(\l, \l+\alpha_i)$, and $F_{i,r} 1_\l \in \Hom(\l, \l-\alpha_i)$ 
subject to the above relations and the condition that $H_{i,0}1_\l = \l_i 1_\l$. As usual, we can also consider $\U(\glm[t])$ by passing to $\glm$ weights.

The main results of \cite{BHLZ,BHLW} give the following:
\begin{thm}\label{thm:Cur} 
There is an isomorphism $\dcurm \cong \vTr(\cal{U}^*_Q(\slm))$ of linear categories determined by
\[
E_{i,r}1_\l \mapsto 
\vTr\left(
\xy
(0,0)*{
\begin{tikzpicture}[scale=.5]
\draw[thick, ->] (0,-1) to (0,1);
\node at (-.25,-.75) {\tiny$i$};
\node at (1,.5) {$_\lambda$};
\node at (.15,0) {$\bullet^r$};
\end{tikzpicture}
};
\endxy
\right) = \;
\xy
(0,0)*{
\begin{tikzpicture}[scale=.3]
\draw[gray] (-4,0) circle (1 and 2);
\draw[gray] (-4,2) to (3,2);
\draw[gray] (-4,-2) to (3,-2);
\draw[thick, directed=.75] (0,-2) arc (-90:90:1 and 2);
\draw[dashed, thick] (0,-2) arc (270:90:1 and 2);
\draw[gray] (3,-2) arc (-90:90:1 and 2);
\draw[gray, dashed] (3,-2) arc (270:90:1 and 2);
\node at (2.75,.75) {$_\lambda$};
\node at (1.25,-.25) {$\bullet^r$};
\node at (.25,-1.25) {\tiny$i$};
\end{tikzpicture}
};
\endxy
\quad , \quad
F_{j,s}1_\l \mapsto 
\vTr\left(
\xy
(0,0)*{
\begin{tikzpicture}[scale=.5]
\draw[thick, <-] (0,-1) to (0,1);
\node at (-.25,.75) {\tiny$j$};
\node at (1,.5) {$_\lambda$};
\node at (.15,0) {$\bullet^s$};
\end{tikzpicture}
};
\endxy
\right) = \;
\xy
(0,0)*{
\begin{tikzpicture}[scale=.3]
\draw[gray] (-4,0) circle (1 and 2);
\draw[gray] (-4,2) to (3,2);
\draw[gray] (-4,-2) to (3,-2);
\draw[thick, rdirected=.35] (0,-2) arc (-90:90:1 and 2);
\draw[dashed, thick] (0,-2) arc (270:90:1 and 2);
\draw[gray] (3,-2) arc (-90:90:1 and 2);
\draw[gray, dashed] (3,-2) arc (270:90:1 and 2);
\node at (2.75,.75) {$_\lambda$};
\node at (1.25,.25) {$\bullet^s$};
\node at (.25,1.25) {\tiny$j$};
\end{tikzpicture}
};
\endxy
\]
Under this assignment, $H_{i,0} 1_\l \mapsto \lambda_i 1_\l$ and 
\begin{align*}
H_{i,r} \mapsto
\sum_{a+b=r}
(a+1)
\vTr \left(
\xy
(0,0)*{
\begin{tikzpicture}[scale=.75]
	\draw[thick, ->] (.5,0) arc (0:-360:0.5);
	\node at (0,-.5) {$\bullet$};
	\node at (-.25,-.75) {\tiny$\lambda_i {-} 1 {+} a$};
\end{tikzpicture}
};
\endxy
\;\;
\xy
(0,0)*{
\begin{tikzpicture}[scale=.75]
	\draw[thick, <-] (.5,0) arc (0:-360:0.5);
	\node at (0,-.5) {$\bullet$};
	\node at (.25,-.75) {\tiny$-\lambda_i {-} 1 {+} b$};
\end{tikzpicture}
};
\endxy
\right)
=&
- \sum_{a+b=r}
(b+1)
\vTr \left(
\xy
(0,0)*{
\begin{tikzpicture}[scale=.75]
	\draw[thick, ->] (.5,0) arc (0:-360:0.5);
	\node at (0,-.5) {$\bullet$};
	\node at (-.25,-.75) {\tiny$\lambda_i {-} 1 {+} a$};
\end{tikzpicture}
};
\endxy
\;\;
\xy
(0,0)*{
\begin{tikzpicture}[scale=.75]
	\draw[thick, <-] (.5,0) arc (0:-360:0.5);
	\node at (0,-.5) {$\bullet$};
	\node at (.25,-.75) {\tiny$-\lambda_i {-} 1 {+} b$};
\end{tikzpicture}
};
\endxy
\right) \\
=&
- \sum_{a+b=r}
a \;
\vTr \left(
\xy
(0,0)*{
\begin{tikzpicture}[scale=.75]
	\draw[thick, ->] (.5,0) arc (0:-360:0.5);
	\node at (0,-.5) {$\bullet$};
	\node at (-.25,-.75) {\tiny$\lambda_i {-} 1 {+} a$};
\end{tikzpicture}
};
\endxy
\;\;
\xy
(0,0)*{
\begin{tikzpicture}[scale=.75]
	\draw[thick, <-] (.5,0) arc (0:-360:0.5);
	\node at (0,-.5) {$\bullet$};
	\node at (.25,-.75) {\tiny$-\lambda_i {-} 1 {+} b$};
\end{tikzpicture}
};
\endxy
\right)
\end{align*}
for $r>0$ (here the bubbles are all $i$ colored).
\end{thm}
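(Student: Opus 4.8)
The plan is to define a functor $\Psi\colon\dcurm\to\vTr(\cal{U}^*_Q(\slm))$ by the formulas in the statement, check that it respects the defining relations of $\dcurm$, and then argue that it is an isomorphism on each $\Hom$-space. Throughout I will work with the topological model of $\vTr(\cal{U}^*_Q(\slm))$: a morphism $\l\to\l\pm\alpha_i$ is a linear combination of $2$-endomorphisms of a $1$-morphism of the appropriate weight, closed up around the cylinder, two such diagrams being equal precisely when they agree after isotopy on the cylinder and the relations of $\cal{U}^*_Q(\slm)$. The relations $\mathbf{1}$--$\mathbf{4}$ then become identities of closed diagrams. Relation $\mathbf{3}$ for $|i-j|>1$ is immediate from far-commutativity of string diagrams with distant colors; relation $\mathbf{4}$ is the image of the categorified Serre relation in $\cal{U}_Q$; the commutator relations of $\mathbf{1}$ away from the diagonal, and the dot-slide relations $\mathbf{2}$ (which say a $t$-degree shift, i.e.\ a dot, may be moved between factors), follow by resolving a single crossing and using cyclicity of the trace to cancel the error terms, exactly as in the rank-one computation of \cite{BHLZ,BGHL}. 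The condition $H_{i,0}1_\l=\l_i 1_\l$ is the $r=0$ case of the bubble formula, where a bubble with too few dots is a fake bubble contributing a scalar determined by the weight $\l$.

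The heart of the argument is the formula for $H_{i,r}$, which I would extract from $[E_{i,a},F_{i,b}]1_\l=H_{i,r}1_\l$ for any $a+b=r$. Using the rank-one $\slnn{2}$ relations inside $\cal{U}^*_Q(\slm)$, one rewrites $\sE_i\sF_i 1_\l$ and $\sF_i\sE_i 1_\l$ in terms of one another via the two biadjunction (curl) relations; the difference of the corresponding closed diagrams, after sliding the dots onto the resulting clockwise and counterclockwise $i$-bubbles and summing over $a+b=r$, is a signed sum of traces of products of such dotted bubbles. The three equivalent coefficient sequences $(a+1)$, $-(b+1)$, $-a$ then come from the infinite-Grassmannian (bubble) relations of $\cal{U}_Q$, which trade a bubble carrying few dots for a combination of fake bubbles, combined with cyclicity of the trace (so that the two orders of the two bubbles agree). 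I expect this step --- the sign and fake-bubble bookkeeping needed to land on exactly the stated closed forms, and to reconcile the normalization of the generators with the conventions of \cite{BHLW} --- to be the main obstacle.

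Finally, to see that $\Psi$ is an isomorphism I would invoke the algebra-level results of \cite{BHLZ,BHLW}, which identify the trace of the non-graded categorified quantum group with the current algebra and supply matching PBW-type spanning sets on the two sides. Surjectivity of $\Psi$ on $\Hom$-spaces follows because $\vTr(\cal{U}^*_Q(\slm))$ is spanned by closures of $2$-morphisms, and any $2$-morphism can be put into a normal form whose closure is a product of images of the generators $E_{i,r}1_\l$, $F_{i,r}1_\l$, $H_{i,r}1_\l$; injectivity follows by comparing the resulting spanning set with the PBW basis of $\dcurm$. Here I would use the coset decomposition $\cal{U}_Q(\glm)\cong\coprod_{\Z}\cal{U}_Q(\slm)$ of Remark~\ref{glmVslm} to pass between the $\glm$ and $\slm$ pictures and between the algebra and its idempotented form, and the fact noted earlier that the vertical trace does not distinguish $\cal{U}^*_Q(\slm)$ from its Karoubi completion, so that no idempotent-completion issue intervenes.
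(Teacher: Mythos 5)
The paper does not prove this theorem: it is stated verbatim as a consequence of the main results of \cite{BHLZ,BHLW} and no proof environment follows it. Your outline is therefore not competing with an argument in the paper, and at its core it does the same thing the paper does, since your final paragraph establishes that $\Psi$ is an isomorphism by ``invoking the algebra-level results of \cite{BHLZ,BHLW}, which identify the trace of the non-graded categorified quantum group with the current algebra'' --- but that identification \emph{is} the theorem. As a self-contained proof this step is circular; as a reconstruction of how \cite{BHLZ,BHLW} proceed (define the map on generators, verify the current-algebra relations via crossing resolutions, curl and infinite-Grassmannian relations together with cyclicity of the trace, extract $H_{i,r}$ from $[E_{i,a},F_{i,b}]$, and prove bijectivity by matching a diagrammatic spanning set against a PBW-type basis and a triangularity argument) it is essentially accurate.

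One concrete error: your claim that $H_{i,0}1_\l=\l_i 1_\l$ is ``the $r=0$ case of the bubble formula'' is false, and the theorem deliberately restricts the bubble formula to $r>0$ for exactly this reason. At $r=0$ the only term is $a=b=0$, and the two bubbles appearing carry $\l_i-1$ and $-\l_i-1$ dots, i.e.\ they are the degree-zero fake bubbles, which are normalized to $\pm 1$ independently of $\l$; so the first displayed sum evaluates to a constant, while the third (with coefficient $a$) evaluates to $0$. In particular the three expressions do not even agree at $r=0$, and none of them equals $\l_i$. The value of $H_{i,0}$ instead comes from the categorified commutator isomorphism $\sE_i\sF_i\onel\cong\sF_i\sE_i\onel\oplus\onel^{\oplus\l_i}$ (for $\l_i\geq 0$) after passing to the trace, which is how it is handled in \cite{BHLZ,BHLW} and why it is listed separately in the statement.
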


We can also consider the vertical trace of the $\slnn{2}$ foam $2$-category, which admits a topological description as well. We'll again work 
both with $\Foam{2}{}$ and a version $2\cat{Foam}^*$ in which we've forgotten the grading on 1-morphisms, and allow 2-morphisms of 
any degree. Morphisms in $\vTr(\Foam{2}{})$ or $\vTr(\Foam{2}{}^*)$ are given by closing foams with the same domain and codomain 1-morphisms 
around a horizontal axis, \eg
\[
\vTr\left(
\xy
(0,0)*{
\begin{tikzpicture} [fill opacity=0.2, scale=.35]
	\draw[double] (2,0) -- (.75,0);
	\draw[double] (-.75,0) -- (-2,0);
	\draw[very thick, rdirected=.55] (-.75,0) .. controls (-.5,-.5) and (.5,-.5) .. (.75,0);
	\draw[very thick, rdirected=.55] (-.75,0) .. controls (-.5,.5) and (.5,.5) .. (.75,0);
	\draw[very thick] (-2,0) -- (-2,4);
	\draw[very thick] (2,0) -- (2,4);
	\path [fill=yellow] (-2,4) -- (-.75,4) -- (-.75,0) -- (-2,0) -- cycle;
	\path [fill=yellow] (2,4) -- (.75,4) -- (.75,0) -- (2,0) -- cycle;
	\path [fill=yellow] (-.75,4) .. controls (-.75,2) and (.75,2) .. (.75,4) --
			(.75, 0) .. controls (.75,2) and (-.75,2) .. (-.75,0);
	\path [fill=red] (-.75,4) .. controls (-.5,4.5) and (.5,4.5) .. (.75,4) --
			(.75,4) .. controls (.75,2) and (-.75,2) .. (-.75,4);
	\path [fill=red] (-.75,4) .. controls (-.5,3.5) and (.5,3.5) .. (.75,4) --
			(.75,4) .. controls (.75,2) and (-.75,2) .. (-.75,4);
	\path [fill=red] (-.75, 0) .. controls (-.75,2) and (.75,2) .. (.75,0) --
			(.75,0) .. controls (.5,-.5) and (-.5,-.5) .. (-.75,0);
	\path [fill=red] (-.75, 0) .. controls (-.75,2) and (.75,2) .. (.75,0) --
			(.75,0) .. controls (.5,.5) and (-.5,.5) .. (-.75,0);
	\node [opacity=1] at (0,1) {\footnotesize$\bullet$};
	\draw [very thick, red] (.75,4) .. controls (.75,2) and (-.75,2) .. (-.75,4);
	\draw [very thick, red] (-.75, 0) .. controls (-.75,2) and (.75,2) .. (.75,0);
	\draw[double] (2,4) -- (.75,4);
	\draw[double] (-.75,4) -- (-2,4);
	\draw[very thick, rdirected=.55] (-.75,4) .. controls (-.5,3.5) and (.5,3.5) .. (.75,4);
	\draw[very thick, rdirected=.55] (-.75,4) .. controls (-.5,4.5) and (.5,4.5) .. (.75,4);
\end{tikzpicture}};
\endxy
\right)
= \;
\xy
(0,0)*{
\begin{tikzpicture}[fill opacity=.2, scale=.4]
\path[fill=yellow] (2,-1.25) arc (270:90:.625 and 1.25) to (-3,1.25) arc (90:270:.625 and 1.25);
\path[fill=yellow] (.25,0) to (2.625,0) arc (0:90:.625 and 1.25) to (-3,1.25) arc (90:-90:.625 and 1.25) to (2,-1.25) arc (270:360:.625 and 1.25) to (.25,0) arc (360:0:.75 and 1);
\path[fill=red] (-.5,0) circle (.75 and 1);
\path[fill=red] (-.5,0) circle (.75 and 1);
\draw[gray] (2,-2) arc (-90:90:1 and 2);
\draw[gray, dashed] (2,-2) arc (270:90:1 and 2);
\draw[double] (2,0) circle (.625 and 1.25);
\draw[gray] (2,-.25) arc (-90:90:0.125 and .25);
\draw[gray, dashed] (2,-.25) arc (270:90:0.125 and .25);
\draw[gray] (-3,2) to (2,2);
\draw[gray] (-3,-2) to (2,-2);
\draw[gray] (-3,.25) to (2,.25);
\draw[gray] (-3,-.25) to (2,-.25);
\draw (-3,1.25) to (2,1.25);
\draw (-3,-1.25) to (2,-1.25);
\draw[very thick, red] (-.5,0) circle (.75 and 1);
\draw[dashed, red] (-1.25,0) to [out=30,in=150] (.25,0);
\draw[dashed, red] (.25,0) to [out=210,in=330] (-1.25,0);
\node[opacity=1] at (-.5,.5) {$\bullet$};
\draw[gray] (-3,0) circle (1 and 2);
\draw[double] (-3,0) circle (.625 and 1.25);
\draw[gray] (-3,0) circle (.125 and .25);
\end{tikzpicture}
};
\endxy \quad .
\]
Note that objects in $\vTr(\Foam{2}{})$ or $\vTr(\Foam{2}{}^*)$ giving the domain and codomain of such a morphism, 
which are sequences of $1$'s and $2$'s, 
can be read off (radially) from the strand labelings of the circles on the right and left boundaries. As before, such a closure of a 
degree-zero foam only specifies the morphism in $\vTr(\Foam{2}{})$ up to grading shift, since we can simultaneously shift the 
domain and codomain webs in $\Foam{2}{}$ to obtain distinct 2-morphisms, hence different morphisms in $\vTr(\Foam{2}{})$.

Functoriality of the vertical trace, together with equation \eqref{LQRthm} and Theorem \ref{thm:Cur}, immediately gives the following result.

\begin{prop}\label{prop:vTrFunctor}
For each $m,N$, there exists a functor $\dcurm \xrightarrow{\varphi^{m,N}} \vTr(\Foam{2}{}^*)$ determined by $\lambda \mapsto [a_1, \ldots, a_m]$
for $\lambda_i = a_i - a_{i+1}$, $\sum a_i = N$ and
\[
E_{i,r}1_\l \mapsto 
\xy
(0,0)*{
\begin{tikzpicture}[fill opacity=.2, scale=.5]
\path[fill=red] (3,-.75) arc (-90:90:.375 and .75) to (-4,.75) arc (90:-90:.375 and .75);
\path[fill=red] (3,-.75) arc (270:90:.375 and .75) to (-4,.75) arc (90:270:.375 and .75);
\path[fill=blue] (0,-1.5) arc (-90:90:.75 and 1.5) to (-1,.75) arc (90:-90:.375 and .75);
\path[fill=blue] (0,-1.5) arc (270:90:.75 and 1.5) to (-1,.75) arc (90:270:.375 and .75);
\path[fill=red] (3,-1.5) arc (-90:90:.75 and 1.5) to (-4,1.5) arc (90:-90:.75 and 1.5);
\path[fill=red] (3,-1.5) arc (270:90:.75 and 1.5) to (-4,1.5) arc (90:270:.75 and 1.5);
\draw[gray] (3,-2) arc (-90:90:1 and 2);
\draw[gray, dashed] (3,-2) arc (270:90:1 and 2);
\draw[gray] (3,-.25) arc (-90:90:0.125 and .25);
\draw[gray, dashed] (3,-.25) arc (270:90:0.125 and .25);
\draw[very thick] (3,0) circle (.75 and 1.5);
\draw[very thick] (3,0) circle (.375 and .75);
\draw[gray] (-4,2) to (3,2);
\draw[gray] (-4,-2) to (3,-2);
\draw[gray] (-4,.25) to (3,.25);
\draw[gray] (-4,-.25) to (3,-.25);
\draw[very thick, red] (0,-1.5) arc (270:90:.75 and 1.5);
\draw[very thick,red] (-1,-.75) arc (270:90:.375 and .75);
\draw (-4,1.5) to (3,1.5);
\draw (-4,-1.5) to (3,-1.5);
\draw (-4,.75) to (3,.75);
\draw (-4,-.75) to (3,-.75);
\draw[very thick, red] (0,-1.5) arc (-90:90:.75 and 1.5);
\draw[very thick,red] (-1,-.75) arc (-90:90:.375 and .75);
\draw[gray] (-4,0) circle (1 and 2);
\draw[gray] (-4,0) circle (.125 and .25);
\draw[very thick] (-4,0) circle (.75 and 1.5);
\draw[very thick] (-4,0) circle (.375 and .75);
\node[opacity=1] at (0,1.125) {$\bullet^r$};
\node[red, opacity=1] at (3,1) {\tiny$_{a_{i+1}}$};
\node[red, opacity=1] at (3,.375) {\tiny$_{a_i}$};
\end{tikzpicture}
};
\endxy
\quad , \quad 
F_{j,s} 1_\l \mapsto
\xy
(0,0)*{
\begin{tikzpicture}[fill opacity=.2, scale=.5]
\path[fill=red] (3,-.75) arc (-90:90:.375 and .75) to (-4,.75) arc (90:-90:.375 and .75);
\path[fill=red] (3,-.75) arc (270:90:.375 and .75) to (-4,.75) arc (90:270:.375 and .75);
\path[fill=blue] (0,-.75) arc (-90:90:.375 and .75) to (-1,1.5) arc (90:-90:.75 and 1.5);
\path[fill=blue] (0,-.75) arc (270:90:.375 and .75) to (-1,1.5) arc (90:270:.75 and 1.5);
\path[fill=red] (3,-1.5) arc (-90:90:.75 and 1.5) to (-4,1.5) arc (90:-90:.75 and 1.5);
\path[fill=red] (3,-1.5) arc (270:90:.75 and 1.5) to (-4,1.5) arc (90:270:.75 and 1.5);
\draw[gray] (3,-2) arc (-90:90:1 and 2);
\draw[gray, dashed] (3,-2) arc (270:90:1 and 2);
\draw[gray] (3,-.25) arc (-90:90:0.125 and .25);
\draw[gray, dashed] (3,-.25) arc (270:90:0.125 and .25);
\draw[very thick] (3,0) circle (.75 and 1.5);
\draw[very thick] (3,0) circle (.375 and .75);
\draw[gray] (-4,2) to (3,2);
\draw[gray] (-4,-2) to (3,-2);
\draw[gray] (-4,.25) to (3,.25);
\draw[gray] (-4,-.25) to (3,-.25);
\draw[very thick, red] (-1,-1.5) arc (270:90:.75 and 1.5);
\draw[very thick,red] (0,-.75) arc (270:90:.375 and .75);
\draw (-4,1.5) to (3,1.5);
\draw (-4,-1.5) to (3,-1.5);
\draw (-4,.75) to (3,.75);
\draw (-4,-.75) to (3,-.75);
\draw[very thick, red] (-1,-1.5) arc (-90:90:.75 and 1.5);
\draw[very thick,red] (0,-.75) arc (-90:90:.375 and .75);
\draw[gray] (-4,0) circle (1 and 2);
\draw[gray] (-4,0) circle (.125 and .25);
\draw[very thick] (-4,0) circle (.75 and 1.5);
\draw[very thick] (-4,0) circle (.375 and .75);
\node[opacity=1] at (-1,1.125) {$\bullet^s$};
\node[red, opacity=1] at (3,1) {\tiny$_{a_{i+1}}$};
\node[red, opacity=1] at (3,.375) {\tiny$_{a_i}$};
\end{tikzpicture}
};
\endxy
\]
where we have omitted the remaining parallel cylindrical foam components in the images and, 
as usual, $0$-labeled facets are understood to be deleted.
\end{prop}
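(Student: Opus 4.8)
The plan is to realize $\varphi^{m,N}$ as a composite of maps that are already available, so that the only real content is an unwinding of the displayed formulas. First I would check that the graded $2$-functor $\Phi^{m,N} \colon \cal{U}_Q(\slm) \to 2\cat{Foam}$ of equation \eqref{LQRthm} descends to a $2$-functor $\cal{U}^*_Q(\slm) \to 2\cat{Foam}^*$ between the ungraded versions. This is because $\Phi^{m,N}$ is specified on generating $1$- and $2$-morphisms by formulas that never invoke internal degree, and $\cal{U}^*_Q(\slm)$ (resp.\ $2\cat{Foam}^*$) has the same generators and local relations as $\cal{U}_Q(\slm)$ (resp.\ $2\cat{Foam}$); passing to the starred versions only drops degree-homogeneity of $2$-morphisms and identifies the grading shifts of each $1$-morphism. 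Hence $\Phi^{m,N}$ still respects all relations and yields a well-defined $2$-functor on the ungraded categories.

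Next I would invoke functoriality of the vertical trace: any linear $2$-functor $F \colon \cal{C} \to \cal{D}$ induces a linear functor $\vTr(F) \colon \vTr(\cal{C}) \to \vTr(\cal{D})$ which agrees with $F$ on objects and sends $\vTr(\mathsf{D}) \mapsto \vTr(F(\mathsf{D}))$ on $2$-endomorphisms; this is well defined because $F$ carries $2$-endomorphisms of a $1$-morphism to $2$-endomorphisms and preserves horizontal and vertical composition, hence the commutators spanning the trace relations (see \cite{BHLZ}). Applying $\vTr$ to $\Phi^{m,N} \colon \cal{U}^*_Q(\slm) \to 2\cat{Foam}^*$ and precomposing with the isomorphism $\dcurm \xrightarrow{\sim} \vTr(\cal{U}^*_Q(\slm))$ of Theorem \ref{thm:Cur} defines the desired functor $\varphi^{m,N} \colon \dcurm \to \vTr(2\cat{Foam}^*)$. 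Being a composite of a functor with an isomorphism of categories, it automatically respects all relations of $\dcurm$; in particular I would \emph{not} check those relations by hand, and the images of the $H_{i,r} 1_\lambda$ are forced to be whatever the relations of $\dcurm$ dictate (and one may double-check this by pushing the bubble expressions of Theorem \ref{thm:Cur} through $\Phi^{m,N}$).

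It then remains to read off the explicit formulas. On objects the composite is just $\Phi^{m,N}$ on objects, i.e.\ $\lambda = (\lambda_1, \dots, \lambda_{m-1}) \mapsto [a_1, \dots, a_m]$ with $\sum_i a_i = N$ and $a_i - a_{i+1} = \lambda_i$, zero entries deleted, and the zero object when no solution with entries in $\{0,1,2\}$ exists, exactly as stated. For morphisms, Theorem \ref{thm:Cur} identifies $E_{i,r} 1_\lambda$ with the vertical trace of the $i$-colored upward strand at weight $\lambda$ carrying $r$ dots, that is, $r$ dots on the identity $2$-morphism of $\sE_i \onel$. Under $\Phi^{m,N}$ this $1$-morphism becomes the singular ``merge'' web of an $a_i$- and an $a_{i+1}$-labeled edge (the other $m-2$ edges remaining parallel), its identity $2$-morphism becomes the corresponding identity foam, and the $r$ dots become $r$ dots on the $1$-labeled facet; closing this foam around the cylinder produces precisely the displayed picture, the omitted parallel cylindrical components being the identity foams on the remaining $m-2$ facets. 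The computation for $F_{j,s} 1_\lambda$ is the mirror one with $\sE$ replaced by $\sF$.

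The step I expect to require the most care is exactly this last bookkeeping: tracking $\Phi^{m,N}$ on the relevant generating $2$-morphisms and dots through the cylindrical closure, and in particular verifying that the facet labels adjacent to the left- and right-hand boundary circles come out as $a_{i+1}$ and $a_i$ as drawn. This is routine given the definition of $\Phi^{m,N}$ recalled from \cite{LQR1, QR}, but it is the point at which a careful comparison of conventions is needed.
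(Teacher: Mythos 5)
Your proposal is correct and is exactly the paper's argument: the paper derives this proposition in one line from functoriality of the vertical trace applied to the $2$-functor of equation \eqref{LQRthm} (passed to the ungraded versions), composed with the isomorphism $\dcurm \cong \vTr(\cal{U}^*_Q(\slm))$ of Theorem \ref{thm:Cur}, with the displayed pictures read off from the definition of $\Phi^{m,N}$ on generators. Your more careful unwinding of the descent to the starred $2$-categories and of the cylindrical closure is a faithful expansion of what the paper treats as immediate.
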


In Section \ref{sH}, we'll see that the functor giving $\saKh$ is induced by the one in Proposition \ref{prop:vTrFunctor} 
and skew Howe duality.

\subsection{Horizontal trace}

A second notion of trace for a $2$-category $\cal{C}$ is introduced in \cite{BHLZ}, the \emph{horizontal trace} $\hTr(\cal{C})$; 
the definition, which is somewhat complicated in full generality, 
simplifies for $2$-categories in which every 1-morphism admits a biadjoint. 
Since $\cal{U}_Q(\slm)$ and $\Foam{2}{}$ are such 2-categories, the following two propositions are immediate.
\begin{prop}
The horizontal trace $\hTr(\cal{U}_Q(\slm))$ of $\cal{U}_Q(\slm)$ is equivalent to the category whose objects are direct sums of 1-morphisms in 
$\cal{U}_Q(\slm)$ with equal domain and codomain objects, and whose morphisms are matrices of linear combinations of degree-zero 
$\cal{U}_Q(\slm)$ string diagrams between such 1-morphisms, drawn on a cylinder.
\end{prop}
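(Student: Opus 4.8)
The plan is to unwind the definition of the horizontal trace and exploit the cyclic biadjoint structure of $\cal{U}_Q(\slm)$, so that the statement becomes an instance of the general principle from \cite{BHLZ}: for a linear $2$-category in which every $1$-morphism admits a biadjoint, the horizontal trace is computed by drawing string diagrams on a cylinder. Recall that $\hTr(\cal{U}_Q(\slm))$ has as objects the $1$-morphisms of $\cal{U}_Q(\slm)$ with equal domain and codomain objects, and that
\[
\Hom_{\hTr(\cal{U}_Q(\slm))}(x,y) \;=\; \bigoplus_{z} \Hom_{\cal{U}_Q(\slm)}\!\big(z\circ x,\, y\circ z\big)\;\Big/\;\text{(coend relations)},
\]
the sum running over $1$-morphisms $z$ between the relevant objects, where the coend relations push a $2$-morphism from one end of the $z$-strand to the other. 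Write $\cal{D}$ for the category described in the statement.

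First I would define a functor $\Psi\colon \hTr(\cal{U}_Q(\slm)) \to \cal{D}$ that is the identity on objects (both categories have direct sums of such $1$-morphisms as objects) and sends the class of a $2$-morphism $\alpha\colon z\circ x \to y\circ z$ to the cylindrical diagram obtained by \emph{closing up} $\alpha$: one draws $\alpha$ in a rectangle with the $z$-strands leaving at the top-left and re-entering at the bottom-left, then bends those strands around the back of the cylinder using the unit and counit of the biadjunction between $z$ and its biadjoint. Since $\cal{U}_Q(\slm)$ is cyclic---its biadjunctions satisfy the zig-zag identities, and bending a strand to the left agrees with bending it to the right---this closure depends only on $\alpha$, and, because every $2$-morphism in $\cal{U}_Q(\slm)$ is degree zero, the output is a degree-zero diagram on the cylinder. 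Well-definedness then rests on two points: the coend relation, which moves a $2$-morphism from one end of the $z$-strand to the other, becomes an isotopy of that strand around the cylinder; and the zig-zag relations ensure that $\Psi(\alpha)$ is independent of the chosen factorization of the source and target $1$-morphisms through $z$. Compatibility with composition and with the additive (matrix) structure is routine, since stacking rectangles before closing up is gluing cylinders.

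Essential surjectivity of $\Psi$ is immediate, as the two categories have the same objects, so it remains to prove full faithfulness, for which I would produce an inverse on $\Hom$-spaces by \emph{cutting}. Given a cylindrical diagram $D$ from $x$ to $y$, choose a vertical seam meeting $D$ transversally and away from all crossings, dots, caps, and cups; cutting along it presents $D$ as the closure of a planar diagram $\alpha\colon z\circ x \to y\circ z$, where $z$ records---with multiplicities and orientations---the strands the seam crosses. Sliding the seam across a generic diagram alters $\alpha$ by precisely the coend relations: pushing a crossing or a dot through the seam \emph{is} the coend move, and pushing a cap or cup through it is legitimate exactly because $\cal{U}_Q(\slm)$ is pivotal. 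Thus cutting descends to a well-defined two-sided inverse to $\Psi$ on morphisms, extends to matrices by additivity, and yields the claimed equivalence.

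The main obstacle is the bookkeeping in the full-faithfulness step: one must verify that cylinder isotopies and the local relations of $\cal{U}_Q(\slm)$ read on the cylinder are all accounted for by the coend relations together with the planar local relations, and, conversely, that the coend relations impose nothing beyond cylinder isotopy. This is entirely governed by the cyclicity of the biadjoints in $\cal{U}_Q(\slm)$---part of the Khovanov--Lauda presentation---which allows a strand to be slid across the seam in either direction without ambiguity; granted that structure, the identification is, as the text says, immediate. The argument for the companion statement about $\Foam{2}{}$ is formally identical, using that $\Foam{2}{}$ also has cyclic biadjoints.
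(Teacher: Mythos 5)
Your proposal is correct and matches the paper's (implicit) reasoning: the paper gives no written proof, declaring the proposition immediate from the fact that every $1$-morphism in $\cal{U}_Q(\slm)$ admits a biadjoint, which is exactly the cyclic-closure/cutting argument you spell out. The closing-up functor, its well-definedness via the coend relations, and the seam-cutting inverse are the standard unwinding of the horizontal trace from \cite{BHLZ} in the presence of cyclic biadjoints, so your write-up is a faithful elaboration of the intended argument.
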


For example, the following picture
\begin{equation}\label{hTrex}
\xy
(0,0)*{
\begin{tikzpicture}[scale=.3]
\draw[gray] (2,0) arc (360:180:2 and 1);
\draw[gray, dashed] (2,0) arc (0:180:2 and 1);
\draw[gray] (2,0) to (2,7);
\draw[gray] (-2,0) to (-2,7);
\draw[thick, dashed] (2,1) to [out=90,in=270] (-2,3);
\draw[thick, dashed] (2,2) to [out=90,in=270] (-2,4);
\draw[thick, directed=.55] (.5,-.95) to [out=90,in=270] (2,1);
\draw[thick,directed=.55] (-2,3) to [out=90,in=270] (.5,6.05);
\draw[thick,rdirected=.55] (-.5,-.95) to [out=90,in=270] (2,2);
\draw[thick,rdirected=.55] (-2,4) to [out=90,in=270] (-.5,6.05);
\draw[gray] (0,7) circle (2 and 1);
\node at (1,5) {\tiny$\lambda$};
\end{tikzpicture}
};
\endxy
\end{equation}
denotes a morphism $\sF \sE \onel \to \sF \sE \onel$ in $\hTr(\cal{U}_Q(\slnn{2}))$. 
In general, we can recover the (co)domain from such a ``cylindrical string diagram'' from the 
boundary points, which are required to be on the front of the top and bottom circles.
We have a similar result in the foam setting:

\begin{prop}
The horizontal trace $\hTr(\Foam{2}{})$ of $\Foam{2}{}$ is equivalent to $\AFoam$.
\end{prop}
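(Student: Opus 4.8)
The plan is to realize the equivalence as the functor that closes cylindrical foams up around the annular core. By the same reasoning as the previous proposition --- which uses only that every $1$-morphism of $\Foam{2}{}$ admits a biadjoint --- $\hTr(\Foam{2}{})$ is equivalent to the category whose objects are graded direct sums of left-directed $\slnn{2}$ webs $W\colon \vec{a}\to\vec{a}$ with equal source and target sequence, and whose morphisms are matrices of linear combinations of degree-zero foams between such webs drawn on a cylinder. I would define $\cal{G}\colon\hTr(\Foam{2}{})\to\AFoam$ by gluing: send $W\colon\vec a\to\vec a$ to the annular web obtained by identifying its two boundary copies of $\vec a$ around the core of $\cal{A}$, and send a cylindrical foam to the annular foam obtained by the analogous gluing in $\cal{A}\times[0,1]$. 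Well-definedness requires only that this gluing carries isotopies of cylindrical foams to isotopies of annular foams, carries the local foam relations to themselves, and sends the cyclic sliding relations built into the horizontal trace to ambient isotopies of $\cal{A}\times[0,1]$; and since the degree formula \eqref{sl2foamdeg} is additive under gluing, $\cal{G}$ preserves the grading.

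For essential surjectivity and fullness I would use cutting arguments, one dimension apart. Any trivalent graph in $\cal{A}$ can be isotoped to meet a fixed radial arc $\gamma\cong\{\ast\}\times[0,1]$ transversally and to avoid all of its vertices; cutting along $\gamma$ recovers a left-directed web $W$ with $\cal{G}(W)$ the original graph, so every object of $\AFoam$ is isomorphic to one in the image (direct sums handled componentwise). Likewise, given an annular foam $F\colon\cal{G}(W_0)\to\cal{G}(W_1)$, isotope $F$ so that it meets the radial curtain $\gamma\times[0,1]\subset\cal{A}\times[0,1]$ transversally in a left-directed web and meets $\gamma\times\{0,1\}$ in the source sequences of $W_0,W_1$; cutting $F$ along the curtain produces a cylindrical foam mapping to $F$ under $\cal{G}$. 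Matrices of linear combinations are treated entrywise, so $\cal{G}$ is full.

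Faithfulness is the step I expect to be the main obstacle. Suppose cylindrical foams $F,F'$ (with the same source and target object) have annular closures that are equal in $\AFoam$, i.e.\ related by a finite chain of ambient isotopies of $\cal{A}\times[0,1]$ and applications of the local relations. The key is to refine this chain so that each elementary step is either supported in the complement of the curtain --- hence an isotopy, respectively local relation, already imposed in $\hTr(\Foam{2}{})$ --- or is a finger move pushing a small disk of foam across the curtain, and then to identify each such finger move with an instance of the cyclicity relation defining the horizontal trace, which allows a sheet of foam to be slid from one side of the cut around the cylinder to the other. Making this decomposition precise --- choosing $\gamma$ generically with respect to the given chain and controlling the finitely many times at which transversality fails --- is the technical heart of the argument; once it is in place, $F=F'$ in $\hTr(\Foam{2}{})$ and $\cal{G}$ is an equivalence. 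One could alternatively build an explicit inverse to $\cal{G}$ from a single generic cut, but the ``cut generically and analyze finger moves'' approach makes the comparison of relations most transparent.
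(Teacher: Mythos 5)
Your proposal takes essentially the same route as the paper: the equivalence is the annular-closure (gluing) functor on objects and morphisms, with the inverse direction obtained by cutting along a radial arc/curtain. The paper's own proof is only a two-line sketch of this functor, so your additional care with essential surjectivity, fullness, and especially the faithfulness argument via generic cuts and finger moves supplies detail the paper leaves implicit rather than diverging from it.
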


\begin{proof}
The equivalence $\hTr(\Foam{2}{}) \to \AFoam$ is given on objects \eg by sending 
\[
\xy
(0,0)*{
\begin{tikzpicture}[scale=.5]
\draw[double] (.375,0) to (-.375,0);
\draw[very thick, directed=.45] (1.25,.5) to [out=180,in=60] (.375,0);
\draw[very thick, directed=.45] (1.25,-.5) to [out=180,in=300] (.375,0);
\draw[very thick, directed=.75] (-.375,0) to [out=120,in=0] (-1.25,.5);
\draw[very thick, directed=.75] (-.375,0) to [out=240,in=0] (-1.25,-.5);
\end{tikzpicture}
};
\endxy
\; \mapsto \;
\xy
(0,0)*{
\begin{tikzpicture}[scale=.5]
\draw[gray] (0,0) circle (2);
\draw[gray] (0,0) circle (.125);
\draw[double] (.25,-.75) to (-.25,-.75);
\draw[very thick] (.75,-1.25) to [out=180,in=315] (.25,-.75);
\draw[very thick] (-.25,-.75) to [out=225,in=0] (-.75,-1.25);
\draw[very thick, directed=.55] (-.75,0) arc (180:0:.75);
\draw[very thick, directed=.55] (-1.5,0) arc (180:0:1.5);
\draw[very thick] (-.25,-.75) to [out=150,in=270] (-.75,0);
\draw[very thick] (.75,0) to [out=270,in=30] (.25,-.75);
\draw[very thick] (-.75,-1.25) to [out=180,in=270] (-1.5,0);
\draw[very thick] (1.5,0) to [out=270,in=0] (.75,-1.25);
\end{tikzpicture}
};
\endxy
\]
where the first, non-annular web is viewed as an object of $\hTr(\Foam{2}{})$. 
The equivalence is similarly given on 2-morphisms by ``filling in'' an enhanced foam embedded in $\cal{A} \times[0,1]$ 
(which possibly wraps around the annulus) with vertical sheets.
\end{proof}

Both $\hTr(\cal{U}_Q(\slm))$ and $\hTr(\Foam{2}{})$ inherit gradings, and the latter coincides with 
the grading on $\AFoam$. Given a $2$-morphism $\mathsf{D}$ in either $\cal{U}_Q(\slm)$ or $\Foam{2}{}$ with 
equal domain and codomain objects, we'll denote by $\hTr(\mathsf{D})$ the obvious horizontal closure in 
$\hTr(\cal{U}_Q(\slm))$ or $\hTr(\Foam{2}{})$, \eg 
\[
\hTr \left(
\;
\xy
(0,0)*{
\begin{tikzpicture}[scale=.5]
\draw[thick, <-] (0,-1) to [out=90,in=180] (.5,0) to [out=0,in=90] (1,-1);
\node at (1.25,-1) {\tiny$_i$};
\node at (1,.5) {\tiny$_\lambda$};
\end{tikzpicture}
};
\endxy
\right) =
\xy
(0,0)*{
\begin{tikzpicture}[scale=.3]
\draw[gray] (2,0) arc (360:180:2 and 1);
\draw[gray, dashed] (2,0) arc (0:180:2 and 1);
\draw[gray] (2,0) to (2,5);
\draw[gray] (-2,0) to (-2,5);
\draw[thick, directed=.5] (1,-.9) to [out=90,in=0] (0,2) to [out=180,in=90] (-1,-.9);
\draw[gray] (0,5) circle (2 and 1);
\node at (1,3) {\tiny$\lambda$};
\node at (1.25,-.25) {\tiny$_i$};
\end{tikzpicture}
};
\endxy
\quad .
\]
Note, however, that there exist morphisms in $\hTr{(\cal{C})}$ which are not the horizontal closure 
of a $2$-morphism in $\cal{C}$, for example the 2-morphism in equation \eqref{hTrex}.
Observe also that functoriality of $\hTr$ implies that for each $m,N$ there exists a functor
\begin{equation}\label{hTrFunctor}
\hTr(\cal{U}_Q(\slm)) \xrightarrow{\phi^{m,N}} \AFoam
\end{equation}
which is given by the cylindrical/annular extension of equation \eqref{LQRthm}.

Next, recall from \cite{BHLZ} that there is a nice relationship between the vertical and horizontal traces of a $2$-category.
More specifically, given a $2$-category $\cal{C}$, there is a fully faithful functor
\begin{equation}\label{vTrinhTr}
\vTr(\cal{C}) \xhookrightarrow{\iota} \hTr(\cal{C})
\end{equation}
which is given on objects by sending $c \in \mathrm{Ob}(\cal{C})$ to the identity 1-morphism $\onell{c}$ of that object.
For the $2$-categories considered here, this functor is given on morphisms by simply turning the cylinder (or $\cal{A} \times [0,1]$) sideways, \eg 
\[
\xy
(0,0)*{
\begin{tikzpicture}[scale=.3]
\draw[gray] (-5,0) circle (1 and 2);
\draw[gray] (-5,2) to (4,2);
\draw[gray] (-5,-2) to (4,-2);
\draw[thick, dashed] (-1,2) arc (90:270:1 and 2); 
\draw[thick, dashed] (1,2) arc (90:270:1 and 2);
\draw[thick, directed=.85] (1,-2) to [out=0,in=0] (-1,2);
\draw[thick, directed=.85] (-1,-2) to [out=0,in=0] (1,2);
\draw[gray] (4,-2) arc (-90:90:1 and 2);
\draw[gray, dashed] (4,-2) arc (270:90:1 and 2);
\node at (2.5,1) {\tiny$\lambda$};
\node at (-3,1) {\tiny$\lambda{+}4$};
\node at (.625,1) {$\bullet$};
\end{tikzpicture}
};
\endxy
\; \xmapsto{\iota} \;
\xy
(0,0)*{
\begin{tikzpicture}[scale=.3,rotate=270]
\draw[gray] (-5,0) circle (1 and 2);
\draw[gray] (-5,2) to (4,2);
\draw[gray] (-5,-2) to (4,-2);
\draw[thick, dashed] (-1,2) arc (90:270:1 and 2); 
\draw[thick, dashed] (1,2) arc (90:270:1 and 2);
\draw[thick, directed=.85] (1,-2) to [out=0,in=0] (-1,2);
\draw[thick, directed=.85] (-1,-2) to [out=0,in=0] (1,2);
\draw[gray] (4,-2) arc (-90:90:1 and 2);
\draw[gray, dashed] (4,-2) arc (270:90:1 and 2);
\node at (4.5,0) {\tiny$\onel$};
\node at (-3.5,0) {\tiny$\onell{\lambda{+}4}$};
\node at (.625,1) {$\bullet$};
\end{tikzpicture}
};
\endxy
\]
and, in the case of $\cal{U}_Q(\slm)$ or $\Foam{2}{}$, shifting the gradings on the domain and codomain in $\hTr$ 
by the degree of the morphism in $\vTr$.

In our setting, we can actually extend the inclusion \eqref{vTrinhTr}, 
using the relationship between $\cal{U}_Q(\slm)$ and $\Foam{2}{}$ and their non-graded forms. 
Let $\cal{D}$ denote either $\cal{U}_Q(\slm)$ or $\Foam{2}{}$ and note that the morphisms in $\vTr(\cal{D}^*)$ inherit 
gradings via the grading on the corresponding 2-morphism in $\cal{D}$, independent of the choice of representative 
(in $\vTr(\cal{U}_Q^*(\slm))$, this is simply the grading on the current algebra). 
Following the discussion in \cite[Section 6]{BN2}, we can consider a graded version of $\vTr(\cal{D}^*)$ by introducing a formal 
grading shift $q^k$ on objects, imposing the condition that a morphism $q^{k_1} d_1 \xrightarrow{\mathsf{D}} q^{k_2} d_2$ has 
degree\footnote{Our somewhat backwards looking convention is chosen to best match the two existing conventions that 
a dot in $\cal{U}_Q(\mf{g})$ has degree two, and that the Khovanov differential ``raises'' the power of $q$.} $\deg(\mathsf{D})+k_1-k_2$, 
and restricting to degree-zero morphisms.

Denoting the additive closure of this graded version of the vertical trace of $\cal{D}^*$ by $\widetilde{\vTr}(\cal{D})$, we have the following enhancement of 
equation \eqref{vTrinhTr}.

\begin{prop}\label{prop:hTrReduction}
There is a degree-preserving, fully faithful functor $\grvTr(\cal{D}) \xhookrightarrow{\iota} \hTr(\cal{D})$.
\end{prop}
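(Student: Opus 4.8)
The plan is to deduce the statement by applying a ``graded closure'' construction to the fully faithful functor $\vTr(\cal{D}^*)\hookrightarrow\hTr(\cal{D}^*)$, which is the non-graded avatar of equation \eqref{vTrinhTr} and is supplied by \cite{BHLZ}. The key observation is that once the $\Hom$-spaces are unwound, this functor is induced by the \emph{identity} on the relevant $2$-morphism spaces: for a $2$-category $\cal{C}$ one has $\Hom_{\vTr(\cal{C})}(c_1,c_2)=\bigoplus_{G}\Hom_{\cal{C}}(G,G)/[\text{commutators}]$ and $\Hom_{\hTr(\cal{C})}(\onell{c_1},\onell{c_2})=\bigoplus_{G}\Hom_{\cal{C}}(G,G)/\!\sim$, with the sums over $1$-morphisms $G\colon c_1\to c_2$, and the inclusion $\iota$ (``turning the cylinder sideways,'' resp.\ ``filling in with vertical sheets'') is induced by the evident map $\bigoplus_G\Hom_{\cal{C}}(G,G)\to\bigoplus_G\Hom_{\cal{C}}(G,G)$. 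Taking $\cal{C}=\cal{D}^*$ and recalling that the $2$-morphism spaces of $\cal{D}^*$ carry the well-defined gradings inherited from $\cal{D}$, this shows that $\vTr(\cal{D}^*)\hookrightarrow\hTr(\cal{D}^*)$ is a functor of graded-vector-space-enriched categories that is the identity on the common graded $\Hom$-spaces; in particular it is degree-preserving, fully faithful (by \cite{BHLZ}), and an isomorphism on each graded piece.

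Next I would pass to graded closures on both sides, in the sense of \cite[Section 6]{BN2}: formally adjoin grading shifts $q^k$ on objects, declare a morphism $q^{k_1}x_1\xrightarrow{\mathsf{D}}q^{k_2}x_2$ to have degree $\deg(\mathsf{D})+k_1-k_2$, keep the degree-zero morphisms, and take the additive closure. On the $\vTr$-side this is, by definition, $\grvTr(\cal{D})$. On the $\hTr$-side I claim the same recipe recovers $\hTr(\cal{D})$: the $1$-morphisms of $\cal{D}$ are precisely the grading shifts $q^kG$ of those of $\cal{D}^*$, so the objects of $\hTr(\cal{D})$ are exactly $\{q^kG\}$; and a degree-zero closed $2$-morphism $q^{k_1}G_1\to q^{k_2}G_2$ in $\hTr(\cal{D})$ is the same thing as a degree-$(k_2-k_1)$ closed $2$-morphism in $\hTr(\cal{D}^*)$, i.e.\ a morphism in its graded closure. (This is exactly the grading shift on domain and codomain noted in the discussion preceding the statement.) Since graded closure followed by additive closure is functorial and carries degree-preserving fully faithful functors to degree-preserving fully faithful functors, the induced functor $\grvTr(\cal{D})\xrightarrow{\iota}\hTr(\cal{D})$ is degree-preserving and fully faithful, as required.

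The one point demanding genuine care — and what I expect to be the main obstacle — is the identification of the graded closure of $\hTr(\cal{D}^*)$ with $\hTr(\cal{D})$ itself, i.e.\ checking that forming the horizontal trace commutes with the passage to the graded $2$-category up to the stated grading-shift convention, together with the compatible fact that the geometric form of $\iota$ does not alter degrees. For $\cal{D}=\cal{U}_Q(\slm)$ both are immediate, since degrees of cylindrical string diagrams are computed from the same local combinatorial data (dots, crossings, cups/caps with their weight labels) used in $\cal{U}_Q(\slm)$, none of which is affected by closing up. For $\cal{D}=\Foam{2}{}$ one must additionally verify that ``filling in with vertical sheets'' is compatible with the grading formula \eqref{sl2foamdeg} under the equivalence $\hTr(\Foam{2}{})\cong\AFoam$ established above; this is a short Euler-characteristic bookkeeping showing that the vertical sheets glued in contribute no net change to $-\chi(C)+\tfrac{\#\partial}{2}$ (the dot count and any grading shifts on the source/target webs being transported unchanged). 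Once this bookkeeping is in place, the proof is complete.
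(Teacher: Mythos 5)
Your argument is correct and is essentially the paper's own proof, just unpacked: the paper likewise defines $\iota$ by $q^k d \mapsto q^k\onell{d}$ and ``turning the cylinder sideways,'' inherits full faithfulness from the ungraded statement of \cite{BHLZ}, and disposes of degree-preservation by noting that degrees on both sides are computed from the same local data. The extra step you isolate --- identifying $\hTr(\cal{D})$ with the graded closure of $\hTr(\cal{D}^*)$ under the convention $\deg = \deg(\mathsf{D}) + k_1 - k_2$ --- is left implicit in the paper but is exactly the right bookkeeping.
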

\begin{proof}
The functor is again given on objects by sending $q^k d \in \mathrm{Ob}(\grvTr(\cal{D}))$ to $q^k \onell{d} \in \mathrm{Ob}(\hTr(\cal{D}))$ and 
on morphisms by turning the cylinder (in the case that $\cal{D} = \cal{U}_Q(\slm)$) or $\cal{A} \times [0,1]$ (when $\cal{D} = \Foam{2}{}$) 
sideways. This functor preserves degree, as the degrees of morphisms in both $\grvTr(\cal{D})$ and $\hTr(\cal{D})$ are computed locally.
\end{proof}

This proposition, together with functoriality of the traces and the fact that 
$\cal{U}_Q(\glm) \cong \coprod_{\Z} \cal{U}_Q(\slm)$, now gives the diagram
\[
\xymatrix{
\grvTr(\cal{U}_Q(\glm)) \ar[rr] \ar@{^{(}->}[d] & & \grvTr(2\cat{Foam}) \ar@{^{(}->}[d] \\
\hTr(\cal{U}_Q(\glm)) \ar[rr] & & \hTr(2\cat{Foam}).
} 
\]
Since the horizontal morphisms clearly factor through the traces of $\cal{U}_Q(\glm)^{0 \leq 2}$, 
this gives the commutative diagram in equation \eqref{maindiag}.

%
\section{Skew Howe duality and $\saKh$}\label{sH} 
%

We begin this section by quickly recalling skew Howe duality, \emph{\`{a} la} Cautis, Kamnitzer, Licata, and Morrison  \cite{CKL,CKM}. 
We then show how the diagram in equation \eqref{maindiag} pairs with this to recover sutured annular Khovanov homology.

\subsection{A review of skew Howe duality}

Consider the vector space $\bigwedge^N(\C^n \otimes \C^m)$, which carries commuting actions of $\sln$ and $\glm$. 
The $\glm$ weight space decomposition 
\begin{equation}\label{eqskewHowe}
{\textstyle \bigwedge^N}(\C^n \otimes \C^m) \cong \bigoplus_{N=\sum a_i} \wedge^{a_1} \C^n \otimes \cdots \otimes \wedge^{a_m} \C^n
\end{equation}
gives a functor $\U(\glm) \to \Rep(\sln)$. 
Here $\U(\glm)$ is the idempotented enveloping algebra of $\glm$, $\Rep(\sln)$ is the category of finite-dimensional $\sln$-modules, 
and the functor sends a $\glm$ weight $\mathbf{a} = [a_1,\ldots,a_m]$ to the $\sln$ representation 
$\wedge^{a_1} \C^n \otimes \cdots \otimes \wedge^{a_m} \C^n$, and the elements 
$E_i 1_{\mathbf{a}}$ and $F_i 1_{\mathbf{a}}$ to the morphisms of $\sln$ representations determined by the $\glm$ action 
in equation \eqref{eqskewHowe}. 
As in Remark \ref{glmVslm}, we can view $\U(\glm)$ as $\coprod_{\Z} \U(\slm)$, 
so for each $N \geq 0$ we obtain functors $\U(\slm) \to \Rep(\sln)$, which send an $\sln$ weight 
$\lambda = (\lambda_1,\ldots,\lambda_{m-1})$ to $\wedge^{a_1} \C^n \otimes \cdots \otimes \wedge^{a_m} \C^n$ 
where $\lambda_i = a_i - a_{i+1}$ and $N=\sum a_i$ (and to the zero representation otherwise). 
These (or more precisely their quantum versions) are the functors categorified by equation \eqref{LQRthm}.

Our next result will be used to construct our annular link invariant.
Before stating it, we first introduce the category $\grRep(\slnn{2})$, 
a version of the category of finite-dimensional $\slnn{2}$ representations to which we've introduced a formal $q$-grading. 
Its objects are direct sums of $\slnn{2}$ representations which are formally shifted in $q$-degree, 
\eg they take the form $\bigoplus_{i=1}^{l} q^{k_i} V_i$, where each $V_i$ is an $\slnn{2}$ representation.
Morphisms are given by $q$-degree-zero maps of $\slnn{2}$ representations, \ie we have
\[
\Hom
(q^{k_1}V,q^{k_2}W) = 
\begin{cases}
\Hom
(V,W) & \text{if } k_1 = k_2 \\
0 & \text{else}
\end{cases}
\]
hence $\grRep(\slnn{2}) \cong \bigoplus_\Z \Rep(\slnn{2})$. 
Note that each finite-dimensional $\slnn{2}$-module is itself graded via its weight space decomposition (this is precisely the $w$ grading appearing earlier),
so the vector spaces in $\grRep(\slnn{2})$ are bi-graded.

\begin{lem}\label{lem:sl2Rep}
The skew Howe duality functor induces a functor $\grvTr(\cal{U}_Q(\glm)) \xrightarrow{\SHz} \grRep(\slnn{2})$ which factors through $\grvTr(\Foam{2}{})$.
The induced functor $\grvTr(\Foam{2}{}) \xrightarrow{\SH} \grRep(\slnn{2})$ is monoidal, maps essential 1-labeled circles to the vector representation $V_1$, 
and maps essential $2$-labeled circles to the trivial representation.
\end{lem}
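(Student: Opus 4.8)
The plan is to construct $\SHz$ directly from Theorem~\ref{thm:Cur} and classical skew Howe duality, then to descend it along the vertical trace of the skew Howe $2$-functor, and finally to read off the three asserted properties of the induced functor $\SH$. For the first step, use the $\glm$-analog of Theorem~\ref{thm:Cur} to identify the underlying linear category of $\grvTr(\cal{U}_Q(\glm))$ with the idempotented current algebra $\U(\glm[t])$, the formal $q$-shift on objects recording the internal grading (a dot having $q$-degree $2$). Composing the evaluation $t=0$ with the classical $n=2$ skew Howe functor $\U(\glm)\xrightarrow{\SH}\Rep(\slnn{2})$ of \eqref{eqskewHowe} then produces $\SHz$: a weight $q^{k}[a_1,\dots,a_m]$ is sent to $q^{k}\bigl(\wedge^{a_1}\C^2\otimes\cdots\otimes\wedge^{a_m}\C^2\bigr)$ with its weight-space ($w$-)grading, $E_{i,0}1_{\mathbf a}$ and $F_{i,0}1_{\mathbf a}$ act as the skew Howe intertwiners, and $E_{i,r},F_{i,r},H_{i,r}$ with $r>0$ go to $0$. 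The point requiring verification is that this assignment is degree-preserving (so that it lands in $\grRep(\slnn{2})$): one checks on the generating $2$-morphisms of $\cal{U}_Q(\glm)$, and on the dotted bubbles in the $H_{i,r}$-formulas of Theorem~\ref{thm:Cur}, that the $q$-shift between source and target of $\SHz\bigl(\vTr(\mathsf D)\bigr)$ equals $\deg(\mathsf D)$. Since $\wedge^{a}\C^2=0$ for $a\notin\{0,1,2\}$, the functor $\SHz$ automatically factors through the $2$-bounded quotient $\grvTr(\cal{U}_Q(\glm)^{0\leq2})$.

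For the factorization through $\grvTr(\Foam{2}{})$, recall from \cite{LQR1} (after passing to divided powers, as in Remark~\ref{rem:Ucheck}) that the skew Howe $2$-functor $\Phi\colon\cal{U}_Q(\glm)^{0\leq2}\to\Foam{2}{}$ of \eqref{LQRthm} is full and essentially surjective, so that $\Foam{2}{}$ is a quotient $\cal{U}_Q(\glm)^{0\leq2}/\ker\Phi$; since $\vTr$, hence $\grvTr$, preserves fullness and essential surjectivity, $\grvTr(\Phi)$ is full and essentially surjective (up to direct summands) with $\ker\bigl(\grvTr(\Phi)\bigr)$ the image of $\ker\Phi$. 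By the universal property of the quotient it thus suffices to check that $\SHz\bigl(\vTr(\mathsf D)\bigr)=0$ whenever $\Phi(\mathsf D)=0$, together with the automatic vanishing of the trace relations $\vTr(fg)-\vTr(gf)$ (which are already $0$ in $\grvTr(\cal{U}_Q(\glm))$). The needed vanishing is the decategorified shadow of the compatibility of $\Phi$ with categorified skew Howe from \cite{LQR1,QR}: the foam relations defining $\Foam{2}{}$ (the analogs of \eqref{BNrel}, arising from the Frobenius structure on $\wedge^{\bullet}\C^2\cong H^{*}(\P^1)$) become valid identities of $\slnn{2}$-intertwiners under skew Howe evaluation, and since $\SHz$ is precisely what $\Phi$ becomes after applying $\vTr$ and setting $t=0$, this forces $\vTr(\ker\Phi)\subseteq\ker\SHz$. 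The induced functor is the asserted $\grvTr(\Foam{2}{})\xrightarrow{\SH}\grRep(\slnn{2})$, and by construction $\SHz=\SH\circ\grvTr(\Phi)$.

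It remains to verify the properties of $\SH$. Monoidality: the monoidal structure on $\grvTr(\Foam{2}{})$ (nesting annuli, equivalently the $\boxtimes$ of $\Foam{2}{}$) corresponds under the identifications above to concatenating a $\glm$-weight with a $\glnn{m'}$-weight into a $\glnn{m+m'}$-weight, and skew Howe sends $[a_1,\dots,a_{m+m'}]$ to $\bigl(\wedge^{a_1}\C^2\otimes\cdots\otimes\wedge^{a_m}\C^2\bigr)\otimes\bigl(\wedge^{a_{m+1}}\C^2\otimes\cdots\bigr)$; the unit and associativity constraints match for the same reason, so $\SH$ is monoidal. For the circles, Proposition~\ref{prop:hTrReduction} and the equivalence $\hTr(\Foam{2}{})\cong\AFoam$ identify $\grvTr(\Foam{2}{})\hookrightarrow\AFoam$ as sending an object to the annular closure of its identity web, so the essential $1$-labeled circle is the image of the one-term sequence $(1)$, corresponding via $\grvTr(\Phi)$ to the $\glnn{1}$-weight $[1]$, hence goes to $\wedge^1\C^2=V_1$; likewise the essential $2$-labeled circle is the image of $(2)$, corresponding to $[2]$, hence goes to $\wedge^2\C^2\cong\C$, the trivial representation. (Combined with the delooping isomorphisms in $\Foam{2}{}$, which are genuine $2$-morphisms, this also recovers the value $qV_0\oplus q^{-1}V_0$ on a contractible circle and the functor of \cite{GLW}.) I expect the principal obstacle to be the grading bookkeeping in the first step — simultaneously controlling the formal $q$-grading and the internal $w$-grading, and confirming that $\SHz$ is genuinely degree-preserving — together with pinning down the precise (Karoubi-closed) sense in which $\grvTr(\Phi)$ is full and essentially surjective, since that is what underwrites both the existence and the uniqueness of $\SH$.
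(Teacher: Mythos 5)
Your construction of $\SHz$ and your overall strategy --- identify $\grvTr(\cal{U}_Q(\glm))$ with the idempotented current algebra via Theorem \ref{thm:Cur}, set $t=0$, apply classical skew Howe duality, and then descend to $\grvTr(\Foam{2}{})$ by exhibiting the latter as a quotient --- is exactly the paper's. The claims in the second sentence of the lemma (monoidality, the values on essential $1$- and $2$-labeled circles) then follow from the construction essentially as you describe.

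The one step where your argument has a genuine gap is the factorization itself. You correctly reduce it to showing $\vTr(\ker\Phi)\subseteq\ker\SHz$, but your justification --- ``since $\SHz$ is precisely what $\Phi$ becomes after applying $\vTr$ and setting $t=0$, this forces $\vTr(\ker\Phi)\subseteq\ker\SHz$'' --- presupposes that $\SHz$ descends to the foam side, which is the very thing being proved; and the preceding clause (that the foam relations become valid identities of $\slnn{2}$-intertwiners under skew Howe evaluation) would, pursued honestly, require a relation-by-relation check that every local foam relation, once closed up in the trace, is annihilated by $\SHz$. The paper sidesteps all of this by invoking a sharper structural result from \cite{QR}: $\Foam{2}{}$ is not merely a full quotient of the direct limit over $m$ of $\Ucatc_Q(\glm)$, it is the quotient by the two-sided ideal generated by the identity $2$-morphisms of the identity $1$-morphisms of weights with an entry outside $\{0,1,2\}$. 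This description persists after taking traces, and since $\SHz$ manifestly annihilates exactly those weights (because $\wedge^{a}\C^2=0$ for $a\notin\{0,1,2\}$), it kills the entire ideal and hence factors. The missing ingredient in your write-up is precisely this identification of $\ker\Phi$ with that ideal; once you have it, the ``principal obstacle'' you flag about the exact sense in which $\grvTr(\Phi)$ is full and essentially surjective also dissolves, since the quotient description delivers both existence and uniqueness of $\SH$ at once.
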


\begin{proof}
The functor $\grvTr(\cal{U}_Q(\glm)) \xrightarrow{\SHz} \grRep(\slnn{2})$ is given by ``setting $t=0$'' and then applying skew Howe duality. 
Explicitly, an object in $\grvTr(\cal{U}_Q(\glm))$ is a direct sum of objects of the form $q^k [a_1,\ldots,a_m]$, 
and we send each of these objects to the $q$-graded $\slnn{2}$ representation $q^k (\wedge^{a_1} \C^2 \otimes \cdots \otimes \wedge^{a_m} \C^2)$ 
if every $a_i \in \{0,1,2\}$ and to the zero representation otherwise, and then extend to direct sums. 
Morphisms in $\grvTr(\cal{U}_Q(\glm))$ are given by $q$-degree shifts of elements in $\dcurm$, 
and the functor is given by sending $E_{i,r}1_\l , F_{i,r}1_\l, H_{i,r}1_\l \mapsto 0$ if $r>0$, 
and using skew Howe duality if $r=0$. Here, we use the fact that $\U(\slm)$ is the $t$-degree-zero piece of $\dcurm$.

In \cite{QR}, we showed that $\Foam{2}{}$ is the quotient of $\displaystyle \lim_{\substack{\longrightarrow\\m}}\Ucatc_Q(\glm)$ by the ideal generated by 
(the identity 2-morphisms of identity 1-morphisms of) weights with entries not in $\{0, 1, 2\}$, hence the same is true after taking traces. The result then follows since 
$\grvTr(\Ucatc_Q(\glm)) \cong \grvTr(\cal{U}_Q(\glm))$ and
$\grvTr(\cal{U}_Q(\glm)) \xrightarrow{\SHz} \grRep(\slnn{2})$ kills such weights.
\end{proof}

\subsection{The annular $\slnn{2}$ invariant}

We'll now use the functor in Lemma \ref{lem:sl2Rep} to reconstruct sutured annular Khovanov homology.
Recall from \cite{LQR1} that equation \eqref{eskein} assigns a complex of enhanced webs and foams to any framed tangle. 
Since any framed, annular link can be presented as the annular closure of such a tangle, 
this same equation assigns a complex $C(\cal{L})$ in $\AFoam \cong \hTr(\Foam{2}{})$ to any such link $\cal{L}$. 
Our first result shows that this complex is in fact isomorphic to a complex in $\grvTr(\Foam{2}{})$.

\begin{prop}
The inclusion $\grvTr(\Foam{2}{}) \hookrightarrow \hTr(\Foam{2}{})$ is an equivalence of categories.
\end{prop}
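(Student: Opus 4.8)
The plan is to start from the fact, established in Proposition~\ref{prop:hTrReduction}, that $\iota \colon \grvTr(\Foam{2}{}) \to \hTr(\Foam{2}{})$ is degree-preserving and fully faithful; it then only remains to prove that $\iota$ is essentially surjective. Under the equivalence $\hTr(\Foam{2}{}) \cong \AFoam$ proved above, the objects of the target are direct sums of grading shifts of annular webs, while the objects in the essential image of $\iota$ are precisely the direct sums of grading shifts of horizontal closures of identity $1$-morphisms $\onell{d}$, $d \in \mathrm{Ob}(\Foam{2}{})$ --- that is, of disjoint unions of concentric essential circles labeled by $1$'s and $2$'s. So the content of the proposition is the claim that \emph{every annular web is isomorphic in $\AFoam$ to a direct sum of grading shifts of such concentric configurations}, and, since the essential image of $\iota$ is closed under direct sums and grading shifts, it suffices to treat a single annular web $W$.

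I would prove this in two steps. First, cut $W$ along a radial arc $\{\ast\} \times [0,1] \subset \cal{A}$ chosen in generic position --- transverse to all edges of $W$ and missing all of its trivalent vertices. The result is a web in $[0,1] \times [0,1]$ with matching boundary data on its two vertical sides, i.e.\ an endomorphism $1$-morphism $X \colon \mathbf{a} \to \mathbf{a}$ in $\Foam{2}{}$, where $\mathbf{a}$ lists the labels of the edges met by the arc, and by construction $W = \hTr(X)$. Second, reduce $X$ inside $\Foam{2}{}$: using the digon- and circle-removal relations of \cite{Blan} (equivalently, the $n=2$ case of the web relations in \cite{QR}), and inducting on the number of trivalent vertices and then on the number of closed components, one shows that $X$ is isomorphic to a direct sum of grading shifts of webs with no trivalent vertices and no closed components --- the $\slnn{2}$ incarnation of the reduction of a general web to crossingless matchings. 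Applying the functor $\hTr$ to this isomorphism presents $W = \hTr(X)$ as a direct sum of grading shifts of horizontal closures of vertex-free, circle-free webs; each such closure is a disjoint union of circles embedded in $\cal{A}$, and after an isotopy followed by removing the (necessarily nested, innermost-first) contractible circles via the circle-evaluation relations, it becomes a disjoint union of concentric essential circles --- namely the horizontal closure of $\onell{\mathbf{b}}$ for a suitable $\mathbf{b}$, hence an object in the essential image of $\iota$.

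The main obstacle is the second step, the reduction inside $\Foam{2}{}$: one must verify that the purely local relations of \cite{Blan,LQR1,QR} really are enough to eliminate every trivalent vertex in a web drawn in a disk (so that the induction terminates) and to keep careful track of the accompanying grading shifts, so that the isomorphism produced is degree-zero and hence a genuine isomorphism in the graded categories at hand. I expect this to be routine --- it is exactly the bookkeeping underlying the computations of \cite{Blan} and of the $n=2$ case of \cite{QR} --- but it is the only place where the argument requires anything beyond formal manipulation of traces.
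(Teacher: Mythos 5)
Your reduction to essential surjectivity, and your identification of the essential image of $\iota$ with direct sums of shifts of concentric labelled circles, agree with the paper. The gap is in your second step. It is simply not true that an endo-web $X$ of $\mathbf{a}$ can be decomposed \emph{inside} $\Foam{2}{}$ into a direct sum of grading shifts of vertex-free webs. Take $X = \cal{F}\cal{E}\onenn{[1,1]}$, the ``dumbbell'' in which two $1$-labelled strands merge into a $2$-labelled edge and split again: the only vertex-free webs from $(1,1)$ to $(1,1)$ are identity webs (in the Blanchet setting a turnback is itself built from trivalent vertices), and $\cal{F}\cal{E}\onenn{[1,1]}$ is not a direct sum of shifted identities --- it is indecomposable in $\Ucatc_Q(\glnn{2})^{0\leq 2}$ and hence in $\Foam{2}{}$, as the paper notes explicitly, corresponding to the canonical basis element $FE1_0 \neq 1_0$ (equivalently, the cap-cup element of the Temperley--Lieb algebra is not proportional to the identity). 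Its annular closure \emph{does} decompose, as $q^{-1}\onenn{[2,0]} \oplus q\,\onenn{[2,0]}$, but the isomorphisms realizing this, equations \eqref{eq:FEannularIso1} and \eqref{eq:FEannularIso2}, wrap around the annulus: they exploit the trace relation $\hTr(\cal{G}\cal{H}) \cong \hTr(\cal{H}\cal{G})$ (here turning the closure of $\cal{F}\cal{E}\onenn{[1,1]}$ into the closure of $\cal{E}\cal{F}\onenn{[2,0]}$, where the digon relation applies), and this relation is invisible to any computation performed in $\Foam{2}{}$ before closing up. So the strategy ``reduce $X$ in $\Foam{2}{}$, then apply $\hTr$'' cannot succeed; it would work for the unenhanced Bar-Natan category, where flat tangles literally are crossingless matchings plus circles, but not for the enhanced foam category the proposition is about.

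The paper's proof avoids this by working directly with the annular web: it applies the local isomorphisms \eqref{eq:sl2webiso} (square switches and digon splitting) anywhere in $\cal{A}$ --- in particular across your cut line --- and invokes an Euler-characteristic argument, or the annular evaluation algorithm of Lemma \ref{lem:annEvalAlgo}, to see that these moves terminate in nested circles. To repair your argument you would have to allow, after cutting, cyclic permutation of the tensor factors of $X$ (equivalently, sliding vertices of $W$ around the core of the annulus) as an isomorphism in $\hTr(\Foam{2}{})$; once that move is granted you are reproducing the paper's algorithm rather than giving an alternative to it.
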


\begin{proof}
Since $\grvTr(\Foam{2}{})$ embeds as a full subcategory of $\hTr(\Foam{2}{})$, 
it suffices to show that every object in the latter is isomorphic to one in the former. 
In other words, it suffices to show that every annular enhanced $\slnn{2}$ web is isomorphic 
to a direct sum of degree shifts of tensor products of essential 1- and 2-labeled circles. 
This then follows using the local web isomorphisms
\begin{gather}
\label{eq:sl2webiso}
\xy
(0,0)*{
\begin{tikzpicture}[scale=.5]
\draw[double] (3,0) to (2.25,0);
\draw[very thick, directed=.55] (2.25,0) to [out=225,in=315] (.75,0);
\draw[very thick, directed=.55] (2.25,0) to (1.75,.5);
\draw[double] (1.75,.5) to (1.25,.5);
\draw[very thick, directed=.55] (1.25,.5) to (.75,0);
\draw[double] (.75,0) to (0,0);
\draw[very thick, directed=.55] (3,1) to [out=180,in=45] (1.75,.5);
\draw [very thick,->] (1.25,.5) to [out=135,in=0] (0,1);
\end{tikzpicture}
};
\endxy
\cong 
\xy
(0,0)*{
\begin{tikzpicture}[scale=.5]
\draw[double] (2.5,0) to (0,0);
\draw[very thick, directed=.55] (2.5,1) to (0,1);
\end{tikzpicture}
};
\endxy
\;\; , \;\;
\xy
(0,0)*{
\begin{tikzpicture}[scale=.5]
\draw[double] (3,0) to (2.25,0);
\draw[very thick, directed=.55] (2.25,0) to [out=135,in=45] (.75,0);
\draw[very thick, directed=.55] (2.25,0) to (1.75,-.5);
\draw[double] (1.75,-.5) to (1.25,-.5);
\draw[very thick, directed=.55] (1.25,-.5) to (.75,0);
\draw[double] (.75,0) to (0,0);
\draw[very thick, directed=.55] (3,-1) to [out=180,in=315] (1.75,-.5);
\draw [very thick,->] (1.25,-.5) to [out=225,in=0] (0,-1);
\end{tikzpicture}
};
\endxy
\cong 
\xy
(0,0)*{
\begin{tikzpicture}[scale=.5]
\draw[double] (2.5,1) to (0,1);
\draw[very thick, directed=.55] (2.5,0) to (0,0);
\end{tikzpicture}
};
\endxy \\
\nonumber
\xy
(0,0)*{
\begin{tikzpicture}[scale=.5]
\draw[double] (3,0) to (2.25,0);
\draw[very thick, directed=.55] (2.25,0) to [out=225,in=315] (.75,0);
\draw[very thick, directed=.55] (2.25,0) to [out=135,in=45] (.75,0);
\draw[double] (.75,0) to (0,0);
\end{tikzpicture}
};
\endxy
\cong
q
\xy
(0,0)*{
\begin{tikzpicture}[scale=.5]
\draw[double] (2,0) to (0,0);
\end{tikzpicture}
};
\endxy
\oplus
q^{-1}
\xy
(0,0)*{
\begin{tikzpicture}[scale=.5]
\draw[double] (2,0) to (0,0);
\end{tikzpicture}
};
\endxy
\end{gather}
which are given \eg as the $n=2$ case of \cite[Remarks 3.6 and 3.12]{QR}.
It follows, either from a Euler characteristics argument or as in Lemma \ref{lem:annEvalAlgo} below, 
that these moves are sufficient to decompose any annular web into nested circles.
\end{proof}

Noting that the isomorphisms in equation \eqref{eq:sl2webiso} can be lifted to $\cal{U}_Q(\glm)^{0\leq2}$, 
we have actually shown the following.

\begin{cor}
The inclusion $\grvTr(\cal{U}_Q(\glm)^{0\leq2}) \hookrightarrow \hTr(\cal{U}_Q(\glm)^{0\leq2})$ is an equivalence of categories.
\end{cor}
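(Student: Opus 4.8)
The plan is to mirror the proof of the preceding proposition, with webs replaced by $1$-morphisms of $\cal{U}_Q(\glm)^{0\leq2}$. First, the inclusion $\iota$ is degree-preserving and fully faithful: this is Proposition~\ref{prop:hTrReduction} together with the identification $\cal{U}_Q(\glm)\cong\coprod_{\Z}\cal{U}_Q(\slm)$, and the argument is insensitive to the $0\leq2$ quotient since the degrees of morphisms in $\grvTr$ and in $\hTr$ are computed locally. So the substance of the corollary is essential surjectivity: every object of $\hTr(\cal{U}_Q(\glm)^{0\leq2})$ — that is, every direct sum of words in the $\sE_i$ and $\sF_i$ applied to weights with all entries in $\{0,1,2\}$, drawn on a cylinder — must be shown to be isomorphic to a direct sum of grading shifts $q^k\onenn{\mathbf{b}}$ of identity $1$-morphisms, these being precisely the objects in the essential image of $\grvTr(\cal{U}_Q(\glm)^{0\leq2})$ (their images under $\Phi$ are nested essential $1$- and $2$-labeled circles).

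The key step is to lift the three local web isomorphisms in equation~\eqref{eq:sl2webiso} to isomorphisms of $1$-morphisms in $\cal{U}_Q(\glm)^{0\leq2}$, as indicated in the remark preceding the statement. Under $\Phi$ these webs and the maps between them are the images of $\sE_i\sF_i\onel$, $\sF_i\sE_i\onel$ and $\onel$ at the weights $[\dots,1,1,\dots]$, $[\dots,2,0,\dots]$ and $[\dots,0,2,\dots]$, and the $\mathfrak{sl}_2$ ``$\sE\sF$ versus $\sF\sE$'' isomorphisms of $\cal{U}_Q(\glm)$ realize them; since the source, target and all intermediate weights occurring in these moves already have entries in $\{0,1,2\}$, the isomorphisms survive in the quotient $\cal{U}_Q(\glm)^{0\leq2}$ (this is spelled out in the $n=2$ case of \cite[Remarks~3.6 and~3.12]{QR}). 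With these lifts in hand I would run exactly the reduction from the proof of the previous proposition: starting from a cylindrical $1$-morphism, apply the lifted moves locally around the cylinder to eliminate each merge--split square or digon, decreasing a suitable complexity (e.g.\ the number of trivalent vertices, as in Lemma~\ref{lem:annEvalAlgo} below) until only a disjoint union of essential $1$- and $2$-labeled circles remains, i.e.\ a direct sum of grading shifts of identity $1$-morphisms.

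The main obstacle is purely bookkeeping: one must check that the reduction never forces a $1$-morphism supported at a weight with an entry outside $\{0,1,2\}$ (so every step is legal in the quotient) and that the chosen complexity genuinely strictly decreases, guaranteeing termination and that the end result is a sum of identity $1$-morphisms. Both points are inherited from the corresponding $\Foam{2}{}$ computation via $\Phi$ — equivalently, one may transport the conclusion along the cylindrical/annular extension of $\Phi$ and the equivalence $\hTr(\Foam{2}{})\cong\AFoam$ of the proof of the preceding proposition — so no difficulty arises beyond what was already handled there.
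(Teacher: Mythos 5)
Your proposal is correct and follows essentially the same route as the paper: the paper simply observes that the local web isomorphisms of equation \eqref{eq:sl2webiso} lift to isomorphisms in $\cal{U}_Q(\glm)^{0\leq2}$ (via the categorified $\sE\sF$/$\sF\sE$ commutation relations, with terms through weights outside $\{0,1,2\}$ killed in the quotient), so the reduction argument of the preceding proposition applies verbatim. Your only imprecision is the claim that ``all intermediate weights already have entries in $\{0,1,2\}$'' --- in fact some intermediate weights are bad and it is precisely their vanishing in the quotient that produces the clean isomorphisms --- but this does not affect the argument.
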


Given a framed annular link $\cal{L}$, we can now pass from $C(\cal{L})$ to an isomorphic complex $\tilde{C}(\cal{L})$ which lies in $\grvTr(\Foam{2}{})$. 
Applying the functor $\grvTr(\Foam{2}{}) \xrightarrow{\SH} \grRep(\slnn{2})$ from Lemma \ref{lem:sl2Rep},
we obtain a complex $\SH(\tilde{C}(\cal{L}))$ in $\grRep(\slnn{2})$.

\begin{prop}
The homology of the complex $\SH(\tilde{C}(\cal{L}))$ is isomorphic to $\saKh(\cal{L})$.
\end{prop}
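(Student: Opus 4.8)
The plan is to identify the complex $\SH(\tilde{C}(\cal{L}))$, term by term and differential by differential, with the complex $F(\llbracket\cal{L}\rrbracket)$ whose homology is $\saKh(\cal{L})$. Here $\llbracket\cal{L}\rrbracket$ is the complex in $\cal{ABN}$ assigned to $\cal{L}$ by the Bar-Natan skein relations \eqref{BNskein}, and $F\colon \cal{ABN}\to\grRep(\slnn{2})$ (with \eqref{2dots} imposed) is the monoidal functor of \eqref{safunctor}, whose existence is the Grigsby--Licata--Wehrli theorem \cite{GLW}. The argument then has two independent parts: matching the underlying complexes, and matching the two functors.

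For the complexes, recall that the embedding $\cal{BN}\hookrightarrow\Foam{2}{}$ of \cite{Blan,LQR1} sends $1$-labeled cups and caps to the trivalent generating webs (as in the footnote to \eqref{eskein}), and therefore carries the resolution \eqref{BNskein} of a crossing to the resolution \eqref{eskein} term by term; hence it sends $\llbracket\cal{L}\rrbracket$ to $C(\cal{L})$. Applying the horizontal trace---which is functorial, and which satisfies $\hTr(\Foam{2}{})\simeq\AFoam$ and, by the same argument, $\hTr(\cal{BN})\simeq\cal{ABN}$---shows that the annular Bar-Natan complex $\llbracket\cal{L}\rrbracket$ corresponds to $C(\cal{L})$, and transporting along the equivalence $\AFoam\simeq\grvTr(\Foam{2}{})$ established above, that it corresponds to $\tilde{C}(\cal{L})$. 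This is all compatible with imposing \eqref{2dots}, through which $\SH$ also factors.

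For the functors, it remains to check that $\SH$ agrees with $F$ under these equivalences. Both are monoidal ($\SH$ by Lemma \ref{lem:sl2Rep}), so using the web isomorphisms \eqref{eq:sl2webiso} to decompose every annular object into tensor products of (shifts of) essential $1$- and $2$-labeled circles, it suffices to compare them on objects and on a generating set of morphisms. On objects, both send the essential $1$-circle to $V_1$ (Lemma \ref{lem:sl2Rep}), hence the trivial $1$-circle to $qV_0\oplus q^{-1}V_0$ by \eqref{eq:sl2webiso}, as in \eqref{safunctor}. On the subcategory of trivial circles and core-avoiding cobordisms, $\SH$ restricts to the ordinary $\slnn{2}$ foam evaluation---this is the non-annular content of the $2$-functor $\Phi$ and of skew Howe duality at $t=0$---which coincides with $F$ there by construction; in particular both kill the dotted essential torus of \eqref{safunctor} for degree reasons. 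The remaining generators are the saddles relating essential circles, in particular the merge and split foams of \eqref{safunctor}, which $\SH$ sends to $\slnn{2}$-equivariant maps of the appropriate bidegree; since the relevant equivariant Hom-spaces (such as $\Hom_{\slnn{2}}(V_1\otimes V_1,V_0)$) are at most one-dimensional, these are scalar multiples of the invariant (co)pairings, and the scalars are computed directly using Proposition \ref{prop:vTrFunctor} (and Theorem \ref{thm:Cur}) via skew Howe duality on $\bigwedge(\C^2\otimes\C^m)$, checking agreement with the normalization in \eqref{safunctor} (consistently with the image of the trivial circle). This yields $\SH(\tilde{C}(\cal{L}))\cong F(\llbracket\cal{L}\rrbracket)$; taking homology gives the statement, and the isomorphism is $\slnn{2}$-equivariant, recovering the Grigsby--Licata--Wehrli action.

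The only genuinely non-formal step is the last one: pinning down the scalars (with signs) by which $\SH$ acts on the merge and split foams of \eqref{safunctor}, which is a concrete but convention-sensitive calculation in skew Howe duality. Everything else follows from functoriality of the horizontal and vertical traces, the established comparison between Blanchet's and Bar-Natan's cobordism categories, and the web decompositions \eqref{eq:sl2webiso}.
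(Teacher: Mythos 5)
Your overall strategy is the paper's: reduce the comparison to the saddle cobordisms appearing as edge maps in the cube of resolutions, and verify that $\SH$ sends each one to the map prescribed by \eqref{safunctor} via explicit skew Howe computations in the (traces of the) categorified quantum group. Your organization differs in that you invoke monoidality of $\SH$ and one-dimensionality of the relevant $q$-degree-zero equivariant Hom-spaces to reduce most of the verification to scalars, whereas the paper computes the full maps directly; the paper also notes (citing the standard argument of \cite{ORS}) that agreement of the edge maps up to sign suffices, which would spare you the sign-sensitivity you flag at the end.

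The genuine gap is in the inessential, ``non-annular'' case. First, for a merge of two trivial circles the source and target are direct sums of several $q$-shifted copies of $V_0$, and the space of degree-zero equivariant maps is more than one-dimensional, so Schur's lemma does not pin the map down up to a single scalar. Second, and more importantly, your claim that on trivial circles and core-avoiding cobordisms $\SH$ ``coincides with $F$ by construction'' is not justified: $\SH$ is defined on $\grvTr(\Foam{2}{})$ by setting $t=0$ in the current algebra and applying skew Howe duality, not via the representable functor $\HOM(\varnothing,-)$ used to define Khovanov homology, so there is nothing constructional forcing it to reproduce the Frobenius algebra $\C[x]/(x^2)$. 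A trivial circle is not an object of the vertical trace; one must first decompose it as $qV_0\oplus q^{-1}V_0$ worth of identity $1$-morphisms, after which the saddle becomes a matrix whose entries are linear combinations of dotted bubbles, i.e.\ central elements of $\grvTr(\cal{U}_Q(\glm)^{0\leq 2})$. These must actually be evaluated: the essential point, which the paper verifies explicitly in the final computation of its proof, is that the positive-degree bubbles correspond under Theorem \ref{thm:Cur} to the elements $H_{i,r}$ with $r>0$ and are therefore killed by $\SHz$ (Lemma \ref{lem:sl2Rep}), leaving exactly the non-annular Khovanov differential. This is the most involved step of the paper's proof and the one your argument asserts rather than proves; the rest of your proposal is sound and matches the paper's route.
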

\begin{proof}

It is easy to check that the chain groups in the complex $\SH(\tilde{C}(\cal{L}))$ are isomorphic to those in the complex 
used to compute $\saKh(\cal{L})$ (up to shifts coming from differing grading conventions), 
so it remains to check that the differentials in $\SH(\tilde{C}(\cal{L}))$ give a complex isomorphic to the one used for $\saKh(\cal{L})$. 
By a standard argument (see \eg \cite{ORS}), it suffices to check that the maps on the edges of the ``cube of resolutions'' 
assigned to a link in both theories agree up to a sign. 
All of the maps in the $\saKh$ cube of resolutions are given by saddle cobordisms between circles, 
so we analyze the images of the analogous foams, 
and compare them to the maps assigned to the saddle cobordisms determined by equation \eqref{safunctor}.

We first consider the differential corresponding to an ``enhanced saddle'' in $\AFoam$ between two essential circles and one enhanced inessential circle:
\[
\xy
(0,0)*{
\begin{tikzpicture}[scale=.5]
\draw[gray] (0,0) circle (2);
\draw[gray] (0,0) circle (.125);
\draw[very thick, rdirected=.5] (0,0) circle (.75);
\draw[very thick, rdirected=.5] (0,0) circle (1.5);
\end{tikzpicture}
};
\endxy
\;\;
\longrightarrow
\;\;
q \;
\xy
(0,0)*{
\begin{tikzpicture}[scale=.5]
\draw[gray] (0,0) circle (2);
\draw[gray] (0,0) circle (.125);
\draw[double] (.25,-.75) to (-.25,-.75);
\draw[very thick] (.75,-1.25) to [out=180,in=315] (.25,-.75);
\draw[very thick] (-.25,-.75) to [out=225,in=0] (-.75,-1.25);
\draw[very thick, directed=.55] (-.75,0) arc (180:0:.75);
\draw[very thick, directed=.55] (-1.5,0) arc (180:0:1.5);
\draw[very thick] (-.25,-.75) to [out=150,in=270] (-.75,0);
\draw[very thick] (.75,0) to [out=270,in=30] (.25,-.75);
\draw[very thick] (-.75,-1.25) to [out=180,in=270] (-1.5,0);
\draw[very thick] (1.5,0) to [out=270,in=0] (.75,-1.25);
\end{tikzpicture}
};
\endxy
\]
which is the image under $\hTr(\Phi)$ of the morphism
\[
\onenn{[1,1]}
\xrightarrow{
\hTr\left(\;
\xy
(0,0)*{
\begin{tikzpicture}[scale=.5]
\draw[thick, ->] (0,1) to [out=-90,in=180] (.5,0) to [out=0,in=-90] (1,1);
\end{tikzpicture}
};
\endxy
\; \right)}
q
\cal{F}\cal{E}\onenn{[1,1]}
\]
in $\hTr(\Ucat_Q(\glnn{2}))$. 
Although the latter term is indecomposable in $\Ucat_Q(\glnn{2})^{0 \leq 2}$, the following map is an isomorphism in $\hTr(\Ucat_Q(\glnn{2})^{0 \leq 2})$:
\begin{equation}\label{eq:FEannularIso1}
\begin{gathered}
\cal{F}\cal{E}\onenn{[1,1]}
\xrightarrow{
\;
\begin{pmatrix} \;
\xy
(0,0)*{
\begin{tikzpicture}[scale=.3]
\draw[gray] (-2,-4) arc (-180:0:2 and 1);
\draw[gray, dashed] (-2,-4) arc (180:0:2 and 1);
\draw[gray] (0,2) circle (2 and 1);
\draw[gray] (2,2) to (2,-4);
\draw[gray] (-2,2) to (-2,-4);
\draw[thick] (1,-4.9) to [out=90,in=-90] (2,-1);
\draw[thick, dashed] (2,-1) arc (0:180:2 and 1); 
\draw[thick,<-] (-1,-4.9) to [out=90,in=-90] (-2,-1);
\end{tikzpicture}
};
\endxy
\;
& , \quad
\xy
(0,0)*{
\begin{tikzpicture}[scale=.3]
\draw[gray] (-2,-4) arc (-180:0:2 and 1);
\draw[gray, dashed] (-2,-4) arc (180:0:2 and 1);
\draw[gray] (0,2) circle (2 and 1);
\draw[gray] (2,2) to (2,-4);
\draw[gray] (-2,2) to (-2,-4);
\draw[thick] (1,-4.9) to [out=90,in=-90] (2,-1);
\draw[thick, dashed] (2,-1) arc (0:180:2 and 1); 
\draw[thick,<-] (-1,-4.9) to [out=90,in=-90] (-2,-1);
\node at (-1.5,-3) {$\bullet$};
\end{tikzpicture}
};
\endxy
\;
\end{pmatrix}
}
q^{-1} \onenn{[2,0]} \oplus q \onenn{[2,0]}
\end{gathered}
\end{equation}
with inverse given by
\begin{equation}\label{eq:FEannularIso2}
\begin{gathered}
q^{-1} \onenn{[2,0]} \oplus q \; \onenn{[2,0]}
\xrightarrow{
\begin{pmatrix} \;
\xy
(0,0)*{
\begin{tikzpicture}[scale=.3]
\draw[gray] (-2,-1) arc (-180:0:2 and 1);
\draw[gray, dashed] (-2,-1) arc (180:0:2 and 1);
\draw[gray] (0,5) circle (2 and 1);
\draw[gray] (2,5) to (2,-1);
\draw[gray] (-2,5) to (-2,-1);
\draw[thick,<-] (1,4.2) to [out=-90,in=90] (2,1.5);
\draw[thick, dashed] (2,1.5) arc (0:-180:2 and 1); 
\draw[thick] (-1,4.2) to [out=-90,in=90] (-2,1.5);
\node at (1.5,2.75) {$\bullet$};
\end{tikzpicture}
};
\endxy
\;+\;
\xy
(0,0)*{
\begin{tikzpicture}[scale=.3]
\draw[gray] (-2,-1) arc (-180:0:2 and 1);
\draw[gray, dashed] (-2,-1) arc (180:0:2 and 1);
\draw[gray] (0,5) circle (2 and 1);
\draw[gray] (2,5) to (2,-1);
\draw[gray] (-2,5) to (-2,-1);
\draw[thick,<-] (1,4.2) to [out=-90,in=90] (2,1.5);
\draw[thick, dashed] (2,1.5) arc (0:-180:2 and 1); 
\draw[thick] (-1,4.2) to [out=-90,in=90] (-2,1.5);
\draw [thick,->] (.75,0) arc (0:360:.75);
\node at (0,-.75) {$\bullet$};
\node at (0,-1.25) {\tiny $-2$};
\end{tikzpicture}
};
\endxy
\; & , \quad
\:
\xy
(0,0)*{
\begin{tikzpicture}[scale=.3]
\draw[gray] (-2,-1) arc (-180:0:2 and 1);
\draw[gray, dashed] (-2,-1) arc (180:0:2 and 1);
\draw[gray] (0,5) circle (2 and 1);
\draw[gray] (2,5) to (2,-1);
\draw[gray] (-2,5) to (-2,-1);
\draw[thick,<-] (1,4.2) to [out=-90,in=90] (2,1.5);
\draw[thick, dashed] (2,1.5) arc (0:-180:2 and 1); 
\draw[thick] (-1,4.2) to [out=-90,in=90] (-2,1.5);
\end{tikzpicture}
};
\endxy
\; \end{pmatrix}
}
\cal{F}\cal{E}\onenn{[1,1]}.
\end{gathered}
\end{equation}
Hence, the complex in the vertical trace corresponding to the enhanced saddle is:
\[
[1,1]
\xrightarrow{
\begin{pmatrix}
\xy
(0,0)*{
\begin{tikzpicture}[scale=.3]
\draw[gray] (3,-2) arc (-90:90:1 and 2);
\draw[gray, dashed] (3,-2) arc (270:90:1 and 2);
\draw[gray] (-5,0) circle (1 and 2);
\draw[gray] (-5,2) to (3,2);
\draw[gray] (-5,-2) to (3,-2);
\draw[thick, dashed] (-1,2) arc (90:270:1 and 2); 
\draw[thick, rdirected=.85] (-1,2) to [out=0,in=0] (-1,-2);
\node at (2,1) {\tiny$[1,1]$};
\node at (-3,1) {\tiny$[2,0]$};
\end{tikzpicture}
};
\endxy
& , \quad
\xy
(0,0)*{
\begin{tikzpicture}[scale=.3]
\draw[gray] (3,-2) arc (-90:90:1 and 2);
\draw[gray, dashed] (3,-2) arc (270:90:1 and 2);
\draw[gray] (-5,0) circle (1 and 2);
\draw[gray] (-5,2) to (3,2);
\draw[gray] (-5,-2) to (3,-2);
\draw[thick, dashed] (-1,2) arc (90:270:1 and 2); 
\draw[thick, rdirected=.85] (-1,2) to [out=0,in=0] (-1,-2);
\node at (2,1) {\tiny$[1,1]$};
\node at (-3,1) {\tiny$[2,0]$};
\node at (0,1) {$\bullet$};
\end{tikzpicture}
};
\endxy
\end{pmatrix}
}
[2,0] \oplus q^2
[2,0] .
\]
Under skew Howe duality, this is mapped to:
\[
V_1 \otimes V_1 \xrightarrow{
\begin{pmatrix}
\begin{Bmatrix}
   v_-\otimes v_-\mapsto 0\\
   v_+\otimes v_-\mapsto v_0\\
   v_-\otimes v_+\mapsto -v_0\\
   v_+\otimes v_+\mapsto 0
\end{Bmatrix}
& ,\quad 
0
\end{pmatrix}}
V_0\bigoplus q^2 V_0
\]
which matches equation \eqref{safunctor}.

Similarly,
\[
\xy
(0,0)*{
\begin{tikzpicture}[scale=.5]
\draw[gray] (0,0) circle (2);
\draw[gray] (0,0) circle (.125);
\draw[double] (.25,-.75) to (-.25,-.75);
\draw[very thick] (.75,-1.25) to [out=180,in=315] (.25,-.75);
\draw[very thick] (-.25,-.75) to [out=225,in=0] (-.75,-1.25);
\draw[very thick, directed=.55] (-.75,0) arc (180:0:.75);
\draw[very thick, directed=.55] (-1.5,0) arc (180:0:1.5);
\draw[very thick] (-.25,-.75) to [out=150,in=270] (-.75,0);
\draw[very thick] (.75,0) to [out=270,in=30] (.25,-.75);
\draw[very thick] (-.75,-1.25) to [out=180,in=270] (-1.5,0);
\draw[very thick] (1.5,0) to [out=270,in=0] (.75,-1.25);
\end{tikzpicture}
};
\endxy
\;\; \longrightarrow
q \;
\xy
(0,0)*{
\begin{tikzpicture}[scale=.5]
\draw[gray] (0,0) circle (2);
\draw[gray] (0,0) circle (.125);
\draw[very thick, rdirected=.5] (0,0) circle (.75);
\draw[very thick, rdirected=.5] (0,0) circle (1.5);
\end{tikzpicture}
};
\endxy
\]
lifts to the complex 
\[
q^{-1}[2,0] \oplus q[2,0]
\xrightarrow{
\begin{pmatrix}
\xy
(0,0)*{
\begin{tikzpicture}[scale=.3]
\draw[gray] (3,-2) arc (-90:90:1 and 2);
\draw[gray, dashed] (3,-2) arc (270:90:1 and 2);
\draw[gray] (-5,0) circle (1 and 2);
\draw[gray] (-5,2) to (3,2);
\draw[gray] (-5,-2) to (3,-2);
\draw[thick, dashed] (-1,2) arc (90:270:1 and 2); 
\draw[thick, directed=.85] (-1,2) to [out=0,in=0] (-1,-2);
\node at (2,1) {\tiny$[2,0]$};
\node at (-3,1) {\tiny$[1,1]$};
\node at (0,1) {$\bullet$};
\end{tikzpicture}
};
\endxy
\;+\;
\xy
(0,0)*{
\begin{tikzpicture}[scale=.3]
\draw[gray] (3,-2) arc (-90:90:1 and 2);
\draw[gray, dashed] (3,-2) arc (270:90:1 and 2);
\draw[gray] (-5,0) circle (1 and 2);
\draw[gray] (-5,2) to (3,2);
\draw[gray] (-5,-2) to (3,-2);
\draw[thick, dashed] (-1,2) arc (90:270:1 and 2); 
\draw[thick, directed=.85] (-1,2) to [out=0,in=0] (-1,-2);
\draw [thick,directed=.75] (2,-1) arc (0:360:.75);
\node at (2,-1) {$\bullet$};
\node at (2.75,-.5) {\tiny $-2$};
\node at (2,1) {\tiny$[2,0]$};
\node at (-3,1) {\tiny$[1,1]$};
\end{tikzpicture}
};
\endxy
& , \quad
\xy
(0,0)*{
\begin{tikzpicture}[scale=.3]
\draw[gray] (3,-2) arc (-90:90:1 and 2);
\draw[gray, dashed] (3,-2) arc (270:90:1 and 2);
\draw[gray] (-5,0) circle (1 and 2);
\draw[gray] (-5,2) to (3,2);
\draw[gray] (-5,-2) to (3,-2);
\draw[thick, dashed] (-1,2) arc (90:270:1 and 2); 
\draw[thick, directed=.85] (-1,2) to [out=0,in=0] (-1,-2);
\node at (2,1) {\tiny$[2,0]$};
\node at (-3,1) {\tiny$[1,1]$};
\end{tikzpicture}
};
\endxy
\end{pmatrix}
}
\;
q[1,1]
\]
which under skew Howe duality is sent to:
\[
q^{-1}V_0\oplus qV_0
\xrightarrow{
\begin{pmatrix}
0 & , \quad
\{v_0\mapsto v_+\otimes v_--v_-\otimes v_+\}
\end{pmatrix}}
qV_1 \otimes V_1.
\]

We now turn to the analog in the foam setting of an essential circle merging with an 
inessential one under a saddle cobordism:
\[
\xy
(0,0)*{
\begin{tikzpicture}[scale=.5]
\draw[gray] (0,0) circle (2.25);
\draw[gray] (0,0) circle (.125);
\draw[very thick, ->] (.75,0) arc (360:0:.75);
\draw[double] (-1.5,0) arc (180:0:1.5);
\draw[double] (-.75,-1.25) to [out=180,in=270] (-1.5,0);
\draw[double] (1.5,0) to [out=270,in=0] (.75,-1.25);
\draw[very thick] (.75,-1.25) to [out=170,in=10] (-.75,-1.25);
\draw[very thick] (.75,-1.25) to [out=250,in=0] (.25,-2) to (-.25,-2) to [out=180,in=290] (-.75,-1.25);
\end{tikzpicture}
};
\endxy
\longrightarrow
q\;
\xy
(0,0)*{
\begin{tikzpicture}[scale=.5]
\draw[gray] (0,0) circle (2.25);
\draw[gray] (0,0) circle (.125);
\draw[very thick] (.75,0) to [out=270,in=60] (.5,-1);
\draw[very thick] (-.5,-1) to [out=120,in=270] (-.75,0);
\draw[very thick, ->] (-.75,0) arc (180:0:.75);
\draw[double] (-1.5,0) arc (180:0:1.5);
\draw[double] (-.75,-1.25) to [out=180,in=270] (-1.5,0);
\draw[double] (1.5,0) to [out=270,in=0] (.75,-1.25);
\draw[double] (.5,-1) to (-.5,-1);
\draw[very thick] (.75,-1.25) to (.5,-1);
\draw[very thick] (-.5,-1) to (-.75,-1.25);
\draw[very thick] (.75,-1.25) to [out=250,in=0] (.25,-2) to (-.25,-2) to [out=180,in=290] (-.75,-1.25);
\end{tikzpicture}
};
\endxy \quad .
\]
This lifts to the following complex in $\hTr( \cal{U}_Q(\glnn{3})^{0 \leq 2})$:
\[
\cal{E}_2 \cal{F}_2\onenn{[1,2,0]}
\xrightarrow{
\xy
(0,0)*{
\begin{tikzpicture}[scale=.3]
\draw[gray] (-2,-2) arc (-180:0:2 and 1);
\draw[gray, dashed] (-2,-2) arc (180:0:2 and 1);
\draw[gray] (0,3) circle (2 and 1);
\draw[gray] (2,3) to (2,-2);
\draw[gray] (-2,3) to (-2,-2);
\draw[thick, green,<-] (1,-2.8) to [out=90,in=-90] (1.6,2.4);
\draw[thick, green,->] (-1,-2.8) to [out=90,in=-90] (-1.6,2.4);
\draw [thick, red,->] (1,2.2) to [out=-90,in=0] (0,0) to [out=180,in=-90] (-1,2.2);
\end{tikzpicture}
};
\endxy
}
q\;
\cal{E}_2 \cal{E}_1 \cal{F}_1 \cal{F}_2\onenn{[1,2,0]}
\]
where we color our string diagrams according to the chromatic ordering 
$\xy
(0,0)*{
\begin{tikzpicture} [scale=1]
\draw[thick] (-.5,0) to (0,0);
\node[red] at (-.5,0) {\Large$\bullet$};
\node[black!30!green] at (0,0) {\Large$\bullet$};
\node[red] at (-.5,.25) {\tiny$1$};
\node[black!30!green] at (0,.25) {\tiny$2$};
\node[red] at (-.5,-.25) {};
\node[black!30!green] at (0,-.25) {};
\end{tikzpicture}
}
\endxy$ 
of the nodes in the $\slnn{3}$ Dynkin diagram. We next note that we have the isomorphism:
\[
q^{-1} \onell{[1,2,0]} \oplus q \onell{[1,2,0]}
\xrightarrow{
\begin{pmatrix}
\xy
(0,0)*{
\begin{tikzpicture}[scale=.3]
\draw[gray] (-2,-2) arc (-180:0:2 and 1);
\draw[gray, dashed] (-2,-2) arc (180:0:2 and 1);
\draw[gray] (0,3) circle (2 and 1);
\draw[gray] (2,3) to (2,-2);
\draw[gray] (-2,3) to (-2,-2);
\draw [thick, green,->] (1,2.2) to [out=-90,in=0] (0,0) to [out=180,in=-90] (-1,2.2);
\node at (0,0) {$\bullet$};
\end{tikzpicture}
};
\endxy
+
\xy
(0,0)*{
\begin{tikzpicture}[scale=.3]
\draw[gray] (-2,-2) arc (-180:0:2 and 1);
\draw[gray, dashed] (-2,-2) arc (180:0:2 and 1);
\draw[gray] (0,3) circle (2 and 1);
\draw[gray] (2,3) to (2,-2);
\draw[gray] (-2,3) to (-2,-2);
\draw [thick, green,->] (1,2.2) to [out=-90,in=0] (0,0) to [out=180,in=-90] (-1,2.2);
\draw [thick, green,->] (.75,-1.25) arc (0:360:.75);
\node at (0,-2) {$\bullet$};
\node at (.2,-2.5) {\tiny ${-}2$};
\end{tikzpicture}
};
\endxy
& , \quad
\xy
(0,0)*{
\begin{tikzpicture}[scale=.3]
\draw[gray] (-2,-2) arc (-180:0:2 and 1);
\draw[gray, dashed] (-2,-2) arc (180:0:2 and 1);
\draw[gray] (0,3) circle (2 and 1);
\draw[gray] (2,3) to (2,-2);
\draw[gray] (-2,3) to (-2,-2);
\draw [thick, green,->] (1,2.2) to [out=-90,in=0] (0,0) to [out=180,in=-90] (-1,2.2);
\end{tikzpicture}
};
\endxy
\end{pmatrix}
}
\quad
\cal{E}_2\cal{F}_2 \onell{[1,2,0]}
\]
in $\hTr(\cal{U}_Q(\glnn{3})^{0 \leq 2})$ with inverse:
\[
\cal{E}_2\cal{F}_2 \onell{[1,2,0]}
\quad
\xrightarrow{
\begin{pmatrix}
\xy
(0,0)*{
\begin{tikzpicture}[scale=.3]
\draw[gray] (-2,-3) arc (-180:0:2 and 1);
\draw[gray, dashed] (-2,-3) arc (180:0:2 and 1);
\draw[gray] (0,1) circle (2 and 1);
\draw[gray] (2,1) to (2,-3);
\draw[gray] (-2,1) to (-2,-3);
\draw [thick, green,<-] (1,-3.9) to [out=90,in=0] (0,-1.5) to [out=180,in=90] (-1,-3.9);
\end{tikzpicture}
};
\endxy
& ,  \quad
\xy
(0,0)*{
\begin{tikzpicture}[scale=.3]
\draw[gray] (-2,-3) arc (-180:0:2 and 1);
\draw[gray, dashed] (-2,-3) arc (180:0:2 and 1);
\draw[gray] (0,1) circle (2 and 1);
\draw[gray] (2,1) to (2,-3);
\draw[gray] (-2,1) to (-2,-3);
\draw [thick, green,<-] (1,-3.9) to [out=90,in=0] (0,-1.5) to [out=180,in=90] (-1,-3.9);
\node at (0,-1.5) {$\bullet$};
\end{tikzpicture}
};
\endxy
\end{pmatrix}
}
q^{-1} \onell{[1,2,0]} \oplus q \onell{[1,2,0]}
\]
and the isomorphism
\[
\cal{E}_2 \cal{E}_1\cal{F}_1 \cal{F}_2 \onell{[1,2,0]}
\xrightarrow{
\xy
(0,0)*{
\begin{tikzpicture}[scale=.3]
\draw[gray] (0,1) circle (2 and 1);
\draw[gray] (-2,-3) arc (-180:0:2 and 1);
\draw[gray, dashed] (-2,-3) arc (180:0:2 and 1);
\draw[gray] (2,1) to (2,-3);
\draw[gray] (-2,1) to (-2,-3);
\draw [thick, red,<-] (1,-3.9) to [out=90,in=0] (0,-1.5) to [out=180,in=90] (-1,-3.9);
\draw [thick, green,<-] (1.5,-3.7) to [out=90,in=0] (0,-.8) to [out=180,in=90] (-1.5,-3.7);
\end{tikzpicture}
};
\endxy
}
\onell{[1,2,0]}
\]
with inverse:
\[
\onell{[1,2,0]}
\xrightarrow{
\xy
(0,0)*{
\begin{tikzpicture}[scale=.3]
\draw[gray] (-2,-1) arc (-180:0:2 and 1);
\draw[gray, dashed] (-2,-1) arc (180:0:2 and 1);
\draw[gray] (0,3) circle (2 and 1);
\draw[gray] (2,3) to (2,-1);
\draw[gray] (-2,3) to (-2,-1);
\draw [thick, red,->] (1,2.15) to [out=-90,in=0] (0,.5) to [out=180,in=-90] (-1,2.15);
\draw [thick, green,->] (1.5,2.4) to [out=-90,in=0] (0,-.3) to [out=180,in=-90] (-1.5,2.4);
\end{tikzpicture}
};
\endxy
}
\cal{E}_2 \cal{E}_1\cal{F}_1\cal{F}_2 \onell{[1,2,0]}.
\]
Using the following relations (where $l > 0$ and which also hold with the colors switched):
\begin{align*}
\xy
(0,0)*{
\begin{tikzpicture}[scale=.75]
\draw [thick,red,->] (-1.25,0) arc (180:-180:.5);
\draw [thick,green,->] (0,-.75) -- (0,.75);
\node at (-.32,-.25) {$\bullet$};
\node at (-.15,-.5) {\tiny $k$};
\end{tikzpicture}
};
\endxy
\; = \;
\xy
(0,0)*{
\begin{tikzpicture}[scale=.75]
\draw [thick,red,->] (.25,0) arc (180:-180:.5);
\draw [thick,green,->] (0,-.75) -- (0,.75);
\node at (1.18,-.25) {$\bullet$};
\node at (1.5,-.5) {\tiny $k+1$};
\end{tikzpicture}
};
\endxy
\; - \;
\xy
(0,0)*{
\begin{tikzpicture}[scale=.75]
\draw [thick,red,->] (.25,0) arc (180:-180:.5);
\draw [thick,green,->] (0,-.75) -- (0,.75);
\node at (1.18,-.25) {$\bullet$};
\node at (1.3,-.5) {\tiny $k$};
\node at (0,0) {$\bullet$};
\end{tikzpicture}
};
\endxy
\quad&,\quad
\xy
(0,0)*{
\begin{tikzpicture}[scale=.75]
\draw [thick,red,->] (.25,0) arc (180:-180:.5);
\draw [thick,green,->] (0,-.75) -- (0,.75);
\node at (1.18,-.25) {$\bullet$};
\node at (1.3,-.5) {\tiny $l$};
\end{tikzpicture}
};
\endxy
\; = \;
\sum_{r+s=l-1}
\xy
(0,0)*{
\begin{tikzpicture}[scale=.75]
\draw [thick,red,->] (-1.25,0) arc (180:-180:.5);
\draw [thick,green,->] (0,-.75) -- (0,.75);
\node at (-.32,-.25) {$\bullet$};
\node at (-.15,-.5) {\tiny $s$};
\node at (0,0) {$\bullet$};
\node at (.25,0) {\tiny $r$};
\end{tikzpicture}
};
\endxy
\\
\xy
(0,0)*{
\begin{tikzpicture}[scale=.75]
\draw [thick,green,<-] (.25,0) arc (180:-180:.5);
\draw [thick,red,->] (0,-.75) -- (0,.75);
\node at (1.18,-.25) {$\bullet$};
\node at (1.3,-.5) {\tiny $k$};
\end{tikzpicture}
};
\endxy
\; = \;
\xy
(0,0)*{
\begin{tikzpicture}[scale=.75]
\draw [thick,green,<-] (-1.25,0) arc (180:-180:.5);
\draw [thick,red,->] (0,-.75) -- (0,.75);
\node at (-.32,-.25) {$\bullet$};
\node at (-.36,-.6) {\tiny $k$+$1$};
\node at (-.38,.7) {};
\end{tikzpicture}
};
\endxy
\; - \;
\xy
(0,0)*{
\begin{tikzpicture}[scale=.75]
\draw [thick,green,<-] (-1.25,0) arc (180:-180:.5);
\draw [thick,red,->] (0,-.75) -- (0,.75);
\node at (-.32,-.25) {$\bullet$};
\node at (-.3,-.5) {\tiny $k$};
\node at (0,0) {$\bullet$};
\end{tikzpicture}
};
\endxy
\quad&,\quad
\xy
(0,0)*{
\begin{tikzpicture}[scale=.75]
\draw [thick,green,<-] (-1.25,0) arc (180:-180:.5);
\draw [thick,red,->] (0,-.75) -- (0,.75);
\node at (-.32,-.25) {$\bullet$};
\node at (-.15,-.5) {\tiny $l$};
\end{tikzpicture}
};
\endxy
\; = \;
\sum_{r+s=l-1}
\xy
(0,0)*{
\begin{tikzpicture}[scale=.75]
\draw [thick,green,<-] (.25,0) arc (180:-180:.5);
\draw [thick,red,->] (0,-.75) -- (0,.75);
\node at (1.18,-.25) {$\bullet$};
\node at (1.3,-.5) {\tiny $s$};
\node at (0,0) {$\bullet$};
\node at (-.25,0) {\tiny $r$};
\end{tikzpicture}
};
\endxy
\end{align*}
we lift the complex to the following one in the vertical trace:
\[
q^{-1} \onell{[1,2,0]} \oplus q \onell{[1,2,0]}
\xrightarrow{
\begin{pmatrix}
\xy
(0,0)*{
\begin{tikzpicture}[scale=.3]
\draw[gray] (-2,-2) arc (-180:0:2 and 1);
\draw[gray, dashed] (-2,-2) arc (180:0:2 and 1);
\draw[gray] (0,3) circle (2 and 1);
\draw[gray] (2,3) to (2,-2);
\draw[gray] (-2,3) to (-2,-2);
\draw [thick, green,->] (1.5,0) arc (0:-360:1.5);
\draw [thick, red,->] (.75,0) arc (0:-360:.75);
\node at (-1.5,0) {$\bullet$};
\end{tikzpicture}
};
\endxy
+
\xy
(0,0)*{
\begin{tikzpicture}[scale=.3]
\draw[gray] (-2,-2) arc (-180:0:2 and 1);
\draw[gray, dashed] (-2,-2) arc (180:0:2 and 1);
\draw[gray] (0,3) circle (2 and 1);
\draw[gray] (2,3) to (2,-2);
\draw[gray] (-2,3) to (-2,-2);
\draw [thick, green,->] (.75,.5) arc (0:-360:.75);
\node at (0,-.25) {$\bullet$};
\node at (.5,-.55) {\tiny $2$};
\end{tikzpicture}
};
\endxy
& , \quad 
\id
\end{pmatrix}
}
q \onell{[1,2,0]}.
\]
Under skew Howe duality, this is sent to:
\[
q^{-1}V_1 \oplus qV_1
\xrightarrow{\left(
0, \id
\right)}
qV_1
\]
as desired. 
Similarly,
\[
\xy
(0,0)*{
\begin{tikzpicture}[scale=.5]
\draw[gray] (0,0) circle (2.25);
\draw[gray] (0,0) circle (.125);
\draw[very thick] (.75,0) to [out=270,in=60] (.5,-1);
\draw[very thick] (-.5,-1) to [out=120,in=270] (-.75,0);
\draw[very thick, ->] (-.75,0) arc (180:0:.75);
\draw[double] (-1.5,0) arc (180:0:1.5);
\draw[double] (-.75,-1.25) to [out=180,in=270] (-1.5,0);
\draw[double] (1.5,0) to [out=270,in=0] (.75,-1.25);
\draw[double] (.5,-1) to (-.5,-1);
\draw[very thick] (.75,-1.25) to (.5,-1);
\draw[very thick] (-.5,-1) to (-.75,-1.25);
\draw[very thick] (.75,-1.25) to [out=250,in=0] (.25,-2) to (-.25,-2) to [out=180,in=290] (-.75,-1.25);
\end{tikzpicture}
};
\endxy
\longrightarrow
q\;
\xy
(0,0)*{
\begin{tikzpicture}[scale=.5]
\draw[gray] (0,0) circle (2.25);
\draw[gray] (0,0) circle (.125);
\draw[very thick, ->] (.75,0) arc (360:0:.75);
\draw[double] (-1.5,0) arc (180:0:1.5);
\draw[double] (-.75,-1.25) to [out=180,in=270] (-1.5,0);
\draw[double] (1.5,0) to [out=270,in=0] (.75,-1.25);
\draw[very thick] (.75,-1.25) to [out=170,in=10] (-.75,-1.25);
\draw[very thick] (.75,-1.25) to [out=250,in=0] (.25,-2) to (-.25,-2) to [out=180,in=290] (-.75,-1.25);
\end{tikzpicture}
};
\endxy
\]
lifts to the complex 
\[
\onell{[1,2,0]}
\xrightarrow{
\begin{pmatrix}
\id 
&, \quad
\xy
(0,0)*{
\begin{tikzpicture}[scale=.3]
\draw[gray] (-2,-2) arc (-180:0:2 and 1);
\draw[gray, dashed] (-2,-2) arc (180:0:2 and 1);
\draw[gray] (0,3) circle (2 and 1);
\draw[gray] (2,3) to (2,-2);
\draw[gray] (-2,3) to (-2,-2);
\draw [thick, green,->] (1.5,0) arc (0:-360:1.5);
\draw [thick, red,->] (.75,0) arc (0:-360:.75);
\node at (-1.5,0) {$\bullet$};
\end{tikzpicture}
};
\endxy
\end{pmatrix}
}
\onell{[1,2,0]} \oplus q^2 \onell{[1,2,0]}
\]
in $\hTr(\cal{U}_Q(\glnn{3}^{0 \leq 2}))$ which maps to:
\[
V_1
\xrightarrow{\left(
\id,
0
\right)}
V_1 \bigoplus q^2 V_1.
\]
The other arrangement, where the trivial circle merging lies inside the essential circle, yields a similar result.

Finally, we consider the inessential, ``non-annular'' situation. The complex
\[
\xy
(0,0)*{
\begin{tikzpicture}[scale=.5]
\draw[gray] (0,0) circle (2.25);
\draw[gray] (0,0) circle (.125);
\draw[double] (-1.5,0) arc (180:0:1.5);
\draw[double] (-1,-1) to [out=180,in=270] (-1.5,0);
\draw[double] (1.5,0) to [out=270,in=0] (1,-1);
\draw[double] (.25,-1) to (-.25,-1);
\draw[very thick] (1,-1) to [out=170,in=10] (.25,-1);
\draw[very thick] (-.25,-1) to [out=170,in=10] (-1,-1);
\draw[very thick] (1,-1) to [out=250,in=0] (.75,-1.75) to (.5,-1.75) to [out=180,in=290] (.25,-1);
\draw[very thick] (-.25,-1) to [out=250,in=0] (-.5,-1.75) to (-.75,-1.75) to [out=180,in=290] (-1,-1);
\end{tikzpicture}
};
\endxy
\longrightarrow
q \;
\xy
(0,0)*{
\begin{tikzpicture}[scale=.5]
\draw[gray] (0,0) circle (2.25);
\draw[gray] (0,0) circle (.125);
\draw[double] (-1.5,0) arc (180:0:1.5);
\draw[double] (-1,-1) to [out=180,in=270] (-1.5,0);
\draw[double] (1.5,0) to [out=270,in=0] (1,-1);
\draw[very thick] (1,-1) to [out=170,in=10] (-1,-1);
\draw[very thick] (1,-1) to [out=250,in=0] (.5,-1.75) to (-.5,-1.75) to [out=180,in=290] (-1,-1);
\end{tikzpicture}
};
\endxy
\]
lifts to:
\[
\cal{E}\cal{F}\cal{E}\cal{F}\onell{[2,0]}
\xrightarrow{
\xy
(0,0)*{
\begin{tikzpicture}[scale=.3]
\draw[gray] (-2,-3) arc (-180:0:2 and 1);
\draw[gray, dashed] (-2,-3) arc (180:0:2 and 1);
\draw[gray] (0,2) circle (2 and 1);
\draw[gray] (2,2) to (2,-3);
\draw[gray] (-2,2) to (-2,-3);
\draw [thick,->] (-1.5,-3.7) to [out=90,in=-90] (-1,1.15);
\draw [thick,<-] (1.5,-3.7) to [out=90,in=-90] (1,1.15);
\draw [thick,->] (1,-3.9) to [out=90,in=0] (0,-1) to [out=180,in=90] (-1,-3.9);
\end{tikzpicture}
};
\endxy}
\quad
q \cal{E} \cal{F} \onell{[2,0]}
\]
in $\hTr(\U_Q(\glnn{2}))$. 
This is isomorphic to the complex
\[
q^{-2}[2,0] \oplus [2,0] \oplus [2,0] \oplus q^2 [2,0]
\xrightarrow{
\begin{pmatrix}
3\;
\xy
(0,0)*{
\begin{tikzpicture}[scale=.75]
\draw [thick,<-] (.75,0) arc (-180:180:.25);
\node at (1.25,0) {$\bullet$};
\node at (1.45,-.15) {\tiny $_2$};
\end{tikzpicture}
};
\endxy
& \id
& \id
& 0
\\
& & & 
\\
\xy
(0,0)*{
\begin{tikzpicture}[scale=.75]
\draw [thick,<-] (.75,0) arc (-180:180:.25);
\node at (1.25,0) {$\bullet$};
\node at (1.45,-.15) {\tiny $_3$};
\end{tikzpicture}
};
\endxy
+
3 \;
\xy
(0,2.5)*{
\begin{tikzpicture}[scale=.75]
\draw [thick,<-] (.75,0) arc (-180:180:.25);
\node at (1.25,0) {$\bullet$};
\node at (1.45,-.15) {\tiny $_2$};
\end{tikzpicture}
};
(0,-2.5)*{
\begin{tikzpicture}[scale=.75]
\draw [thick,<-] (.75,0) arc (-180:180:.25);
\node at (1.25,0) {$\bullet$};
\node at (1.45,-.15) {\tiny $_2$};
\end{tikzpicture}
};
\endxy
& 
2 \;
\xy
(0,0)*{
\begin{tikzpicture}[scale=.75]
\draw [thick,<-] (.75,0) arc (-180:180:.25);
\node at (1.25,0) {$\bullet$};
\node at (1.45,-.15) {\tiny $_2$};
\end{tikzpicture}
};
\endxy
&
2\;
\xy
(0,0)*{
\begin{tikzpicture}[scale=.75]
\draw [thick,<-] (.75,0) arc (-180:180:.25);
\node at (1.25,0) {$\bullet$};
\node at (1.45,-.15) {\tiny $_2$};
\end{tikzpicture}
};
\endxy
& \id
\end{pmatrix}
}
[2,0] \oplus q^2 [2,0]
\]
in $\grvTr(\cal{U}_Q(\glnn{2})^{0 \leq 2})$, where the (linear combinations of) bubbles are understood as the corresponding elements in the vertical trace.
This gives under skew Howe duality:
\[
q^{-2}V_0 \oplus V_0 \oplus V_0 \oplus q^2 V_0 \xrightarrow{
\begin{pmatrix}
0 & \id & \id & 0 \\
0 & 0 & 0 & \id
\end{pmatrix}
}
V_0\bigoplus q^2V_0
\]
which agrees with the ``non-annular Khovanov differential'':
\[
\begin{Bmatrix}
1\otimes 1 \mapsto 1 \\
1 \otimes x \mapsto x \\
x\otimes 1 \mapsto x \\
x\otimes x \mapsto 0
\end{Bmatrix}.
\]
A similar computation confirms the case when an enhanced inessential circle splits into two.
\end{proof}

The following result, originally due to Grigsby-Licata-Wehrli \cite{GLW}, now follows easily from our construction of $\saKh$.

\begin{cor}
For any annular link $\cal{L}$, $\saKh(\cal{L})$ carries an action of $\slnn{2}$.
\end{cor}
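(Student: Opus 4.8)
The plan is to observe that this corollary is an immediate consequence of the construction of $\saKh$ carried out in the preceding proposition. There we showed that $\saKh(\cal{L})$ is computed as the homology of the complex $\SH(\tilde{C}(\cal{L}))$, where $\tilde{C}(\cal{L})$ is the complex in $\grvTr(\Foam{2}{})$ (lifting, via the isomorphisms in \eqref{eq:sl2webiso}, to a complex in $\grvTr(\cal{U}_Q(\glm)^{0\leq2})$) obtained by applying the skein relations \eqref{eskein} to an annular tangle presentation of $\cal{L}$, and $\SH$ is the monoidal functor $\grvTr(\Foam{2}{}) \to \grRep(\slnn{2})$ furnished by skew Howe duality in Lemma \ref{lem:sl2Rep}.

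First I would record what the target of $\SH$ buys us: by construction every chain group of $\SH(\tilde{C}(\cal{L}))$ is an object of $\grRep(\slnn{2})$, i.e.\ a finite-dimensional $\slnn{2}$-representation that has been formally shifted in $q$-degree, and every differential is a morphism in $\grRep(\slnn{2})$, i.e.\ a degree-zero $\slnn{2}$-intertwiner. Thus $\SH(\tilde{C}(\cal{L}))$ is a bounded complex in the abelian category $\grRep(\slnn{2}) \cong \bigoplus_{\Z} \Rep(\slnn{2})$. Next I would take homology inside $\grRep(\slnn{2})$: since this category is abelian and the forgetful functor to bi-graded vector spaces is exact and faithful, $H_\ast(\SH(\tilde{C}(\cal{L})))$ is an object of $\grRep(\slnn{2})$ whose underlying bi-graded vector space is the homology of the underlying complex of vector spaces. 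By the preceding proposition that underlying homology is exactly $\saKh(\cal{L})$ (up to the usual grading-convention shifts), so $\saKh(\cal{L})$ inherits the structure of an $\slnn{2}$-representation — equivalently, it carries an action of $\slnn{2}$, with the $w$-grading encoding the weight-space decomposition.

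There is no genuine obstacle at this stage: the content of the statement is entirely in the preceding proposition, which identifies $\SH(\tilde{C}(\cal{L}))$ with the $\saKh$ chain complex on the nose (not merely up to quasi-isomorphism), so the action transported from the representation-category structure is an action on $\saKh(\cal{L})$ itself. The one thing worth spelling out is naturality/well-definedness — that the identification with $\saKh$ is independent of the annular tangle presentation and of the chosen representatives used to lift $C(\cal{L})$ from $\hTr(\Foam{2}{})$ to $\grvTr(\Foam{2}{})$ — but this is exactly the invariance already established for the underlying complex, and the $\SH$-functor is applied uniformly, so the $\slnn{2}$-module structure is canonical up to isomorphism. (The analogous statement for general $n$, proved identically with $\Foam{2}{}$ replaced by $\Foam{n}{}$ and $\grRep(\slnn{2})$ by $\grRep(\sln)$, will then give the $\saKhR$ version.)
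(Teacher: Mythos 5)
Your argument is exactly the paper's: the differentials of $\SH(\tilde{C}(\cal{L}))$ are $\slnn{2}$-intertwiners, so the homology is computed in $\grRep(\slnn{2})$ and inherits the $\slnn{2}$-action, with the identification of that homology with $\saKh(\cal{L})$ supplied by the preceding proposition. The extra remarks on well-definedness are fine but not needed beyond the invariance already established there.
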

\begin{proof}
This follows immediately since the differential in $\SH(\tilde{C}(\cal{L}))$ consists of intertwiners between $\slnn{2}$-modules.
\end{proof}

\section{Sutured annular Khovanov-Rozansky homology}\label{saKhR}

We now show that the specialization to $\slnn{2}$ link homology in Section \ref{sH} only served to simplify one technical result, 
and hence generalize this to construct our annular $\sln$ link homology theory.

Recall that Khovanov extended his categorification of the Jones polynomial to a link homology theory categorifying the $\slnn{3}$ Reshetikhin-Turaev link invariant
using the theory of $\slnn{3}$ foams \cite{Kh5}.  
Khovanov and Rozansky subsequently addressed the case of general $\sln$ Reshetikhin-Turaev invariants, 
using the homotopy category of matrix factorizations to construct $\sln$ 
link homology \cite{KhR}.
This $\sln$ homology theory was then interpreted using foams by Mackaay, Sto{\v{s}}i{\'c} and Vaz \cite{MSV}; 
however, that construction was not completely elementary/combinatorial in nature.
Recently, the authors introduced the 2-category $\Foam{n}{}$ of enhanced $\sln$ foams  to give an entirely elementary foam-based description 
of (colored) Khovanov-Rozansky homology using categorified skew Howe duality \cite{QR}.

Our foam-based presentation of Khovanov-Rozansky $\sln$ link homology was motivated by a desired relation between $\Foam{n}{}$ 
and the categorified quantum group $\Ucat_Q(\glm)$. 
The crucial point is that equation \eqref{LQRthm} remains valid for general $n$, and we obtain a $2$-functor:
\begin{equation}\label{QRthm}
\Ucat_Q(\glm)\xrightarrow{\Phi_n} n\cat{Foam}.
\end{equation}
This functor again factors through the $n$-bounded quotient $\Ucat_Q(\glm)^{0\leq n}$.

We can now emulate our construction of the $\slnn{2}$ annular link invariant, with $\AFoam$ replaced by $\hTr(\Foam{n}{})$ to construct an annular $\sln$ link homology, 
which we call \emph{sutured annular Khovanov-Rozansky homology}. 
We'll assume some familiarity with the 2-category $\Foam{n}{}$ for the duration; see \cite{QR} for full details.

Recall that the above construction of $\saKh$ consisted of the $n=2$ specialization of the following steps.
\begin{enumerate}
\item \label{step1} Assign to any framed annular link a complex in $\hTr(\Foam{n}{})$ which is invariant up to homotopy under Reidemeister moves. 
\item  \label{step2} Lift this complex to an isomorphic complex in $\grvTr(\Foam{n}{})$.
\item \label{step3} Apply a functor $\grvTr(\Foam{n}{}) \to \grRep(\sln)$ to obtain a complex of formally $q$-graded $\sln$-modules, then take homology. 
\end{enumerate}
For general $n$, the first and third steps follow exactly as they did in the $\slnn{2}$ case. 
Indeed, for step \eqref{step1} we can use the $\sln$ skein relations 
\begin{equation}\label{eq:slnSkein}
\left \llbracket
\xy
(0,0)*{
\begin{tikzpicture} [scale=.5]
\draw[very thick, directed=.99] (1,0) to (0,1);
\draw[very thick] (.6,.6) to (1,1);
\draw[very thick,directed=.99] (.4,.4) to (0,0);
\end{tikzpicture}
}
\endxy
\right \rrbracket_n
=
\left(
q^{-1} \;
\xy
(0,0)*{
\begin{tikzpicture} [scale=.7, rotate=90]
\draw[very thick, directed=.99] (0,0) to [out=45,in=315] (0,1);
\draw[very thick, directed=.99] (1,0) to [out=135,in=225] (1,1);
\end{tikzpicture}
}
\endxy
\xrightarrow{
\xy
(0,0)*{
\begin{tikzpicture} [scale=.2,fill opacity=0.2]
	\path [fill=red] (4.25,2) to (4.25,-.5) to [out=170,in=10] (-.5,-.5) to (-.5,2) to
		[out=0,in=225] (.75,2.5) to [out=270,in=180] (1.625,1.25) to [out=0,in=270] 
			(2.5,2.5) to [out=315,in=180] (4.25,2);
	\path [fill=red] (3.75,3) to (3.75,.5) to [out=190,in=350] (-1,.5) to (-1,3) to [out=0,in=135]
		(.75,2.5) to [out=270,in=180] (1.625,1.25) to [out=0,in=270] 
			(2.5,2.5) to [out=45,in=180] (3.75,3);
	\path[fill=blue] (2.5,2.5) to [out=270,in=0] (1.625,1.25) to [out=180,in=270] (.75,2.5);
	\draw [very thick] (4.25,-.5) to [out=170,in=10] (-.5,-.5);
	\draw [very thick] (3.75,.5) to [out=190,in=350] (-1,.5);
	\draw [very thick, red] (2.5,2.5) to [out=270,in=0] (1.625,1.25) to [out=180,in=270] (.75,2.5);
	\draw [very thick] (3.75,3) to (3.75,.5);
	\draw [very thick] (4.25,2) to (4.25,-.5);
	\draw [very thick] (-1,3) to (-1,.5);
	\draw [very thick] (-.5,2) to (-.5,-.5);
	\draw [very thick] (2.5,2.5) to (.75,2.5);
	\draw [very thick] (.75,2.5) to [out=135,in=0] (-1,3);
	\draw [very thick] (.75,2.5) to [out=225,in=0] (-.5,2);
	\draw [very thick] (3.75,3) to [out=180,in=45] (2.5,2.5);
	\draw [very thick] (4.25,2) to [out=180,in=315] (2.5,2.5);		
\end{tikzpicture}
};
\endxy
}
\;
\uwave{
\xy
(0,0)*{
\begin{tikzpicture} [scale=.7, rotate=90]
\draw[very thick, directed=.85] (.5,.75) to (1,1);
\draw[very thick, directed=.85] (.5,.75) to (0,1);
\draw[very thick, directed=.65] (.5,.25) to (.5,.75);
\draw[very thick, directed=.55] (1,0) to (.5,.25);
\draw[very thick, directed=.55] (0,0) to (.5,.25);
\node at (.75,.5) {\tiny $2$};
\end{tikzpicture}
}
\endxy} \;
\right) 
\quad , \quad
\left \llbracket
\xy
(0,0)*{
\begin{tikzpicture} [scale=.5]
\draw[very thick, directed=.99] (1,1) to (0,0);
\draw[very thick, directed=.99] (.4,.6) to (0,1);
\draw[very thick] (1,0) to (.6,.4);
\end{tikzpicture}
}
\endxy
\right \rrbracket_n
=
\left(
\;
\uwave{
\xy
(0,0)*{
\begin{tikzpicture} [scale=.7, rotate=90]
\draw[very thick, directed=.85] (.5,.75) to (1,1);
\draw[very thick, directed=.85] (.5,.75) to (0,1);
\draw[very thick, directed=.65] (.5,.25) to (.5,.75);
\draw[very thick, directed=.55] (1,0) to (.5,.25);
\draw[very thick, directed=.55] (0,0) to (.5,.25);
\node at (.75,.5) {\tiny $2$};
\end{tikzpicture}
}
\endxy}
\xrightarrow{
\xy
(0,0)*{
\begin{tikzpicture} [scale=.2,fill opacity=0.2]
	\path [fill=red] (4.25,-.5) to (4.25,2) to [out=170,in=10] (-.5,2) to (-.5,-.5) to 
		[out=0,in=225] (.75,0) to [out=90,in=180] (1.625,1.25) to [out=0,in=90] 
			(2.5,0) to [out=315,in=180] (4.25,-.5);
	\path [fill=red] (3.75,.5) to (3.75,3) to [out=190,in=350] (-1,3) to (-1,.5) to 
		[out=0,in=135] (.75,0) to [out=90,in=180] (1.625,1.25) to [out=0,in=90] 
			(2.5,0) to [out=45,in=180] (3.75,.5);
	\path[fill=blue] (.75,0) to [out=90,in=180] (1.625,1.25) to [out=0,in=90] (2.5,0);
	\draw [very thick] (2.5,0) to (.75,0);
	\draw [very thick] (.75,0) to [out=135,in=0] (-1,.5);
	\draw [very thick] (.75,0) to [out=225,in=0] (-.5,-.5);
	\draw [very thick] (3.75,.5) to [out=180,in=45] (2.5,0);
	\draw [very thick] (4.25,-.5) to [out=180,in=315] (2.5,0);
	\draw [very thick, red] (.75,0) to [out=90,in=180] (1.625,1.25) to [out=0,in=90] (2.5,0);
	\draw [very thick] (3.75,3) to (3.75,.5);
	\draw [very thick] (4.25,2) to (4.25,-.5);
	\draw [very thick] (-1,3) to (-1,.5);
	\draw [very thick] (-.5,2) to (-.5,-.5);
	\draw [very thick] (4.25,2) to [out=170,in=10] (-.5,2);
	\draw [very thick] (3.75,3) to [out=190,in=350] (-1,3);	
\end{tikzpicture}
};
\endxy
}
q \;
\xy
(0,0)*{
\begin{tikzpicture} [scale=.7, rotate=90]
\draw[very thick, directed=.99] (0,0) to [out=45,in=315] (0,1);
\draw[very thick, directed=.99] (1,0) to [out=135,in=225] (1,1);
\end{tikzpicture}
}
\endxy \;
\right)
\end{equation}
and the $a=1$ case of the formulae 
\begin{equation}\label{eq:CapsAndCups}
\begin{aligned}
\left \llbracket
\xy
(0,0)*{\begin{tikzpicture} [scale=.5]
\draw[very thick, directed=.99] (0,1) to [out=180,in=90] (-1.25,.5) to [out=270,in=180] (0,0);
\node at (-1.5,1) {\small$a$};
\end{tikzpicture}};
\endxy 
\;\; \right \rrbracket_n
\quad = \quad
\xy
(0,0)*{\begin{tikzpicture} [scale=.5]
 \draw [double] (-1.25,.5) to (-2.25,.5);
 \draw [very thick, directed=.55] (0,0) to [out=180,in=300] (-1.25,.5);
\draw [very thick, directed=.55] (0,1) to [out=180,in=60] (-1.25,.5);
 \node at (.875,0) {\small$n{-}a$};
 \node at (.5,1) {\small$a$};
 \node at (-2.75,.5) {\small$n$}; 
\end{tikzpicture}};
\endxy
\quad &, \quad
\left \llbracket \;\;
\xy
(0,0)*{\begin{tikzpicture} [scale=.5]
\draw[very thick, directed=.99] (0,0) to [out=0,in=270] (1.25,.5) to [out=90,in=0] (0,1);
\node at (1.5,1) {\small$a$};
\end{tikzpicture}};
\endxy
\right \rrbracket_n
\quad = \quad
\xy
(0,0)*{\begin{tikzpicture} [scale=.5]
 \draw [double] (2.25,.5) -- (1.25,.5);
 \draw [very thick, directed=.55] (1.25,.5) to [out=240,in=0] (0,0);
\draw [very thick, directed=.55] (1.25,.5) to [out=120,in=0] (0,1);
 \node at (-.875,0) {\small$n{-}a$};
 \node at (-.5,1) {\small$a$};
 \node at (2.75,.5) {\small$n$}; 
\end{tikzpicture}};
\endxy \\
\left \llbracket
\xy
(0,0)*{\begin{tikzpicture} [scale=.5]
\draw[very thick, rdirected=.05] (0,1) to [out=180,in=90] (-1.25,.5) to [out=270,in=180] (0,0);
\node at (-1.5,1) {\small$a$};
\end{tikzpicture}};
\endxy
\;\; \right \rrbracket_n
\quad = \quad
\xy
(0,0)*{\begin{tikzpicture} [scale=.5]
 \draw [double] (-1.25,.5) to (-2.25,.5);
 \draw [very thick, directed=.55] (0,0) to [out=180,in=300] (-1.25,.5);
\draw [very thick, directed=.55] (0,1) to [out=180,in=60] (-1.25,.5);
 \node at (.875,1) {\small$n{-}a$};
 \node at (.5,0) {\small$a$};
 \node at (-2.75,.5) {\small$n$}; 
\end{tikzpicture}};
\endxy
\quad &, \quad
\left \llbracket \;\;
\xy
(0,0)*{\begin{tikzpicture} [scale=.5]
\draw[very thick, rdirected=.05] (0,0) to [out=0,in=270] (1.25,.5) to [out=90,in=0] (0,1);
\node at (1.5,1) {\small$a$};
\end{tikzpicture}};
\endxy
\right \rrbracket_n
\quad = \quad
\xy
(0,0)*{\begin{tikzpicture} [scale=.5]
 \draw [double] (2.25,.5) -- (1.25,.5);
 \draw [very thick, directed=.55] (1.25,.5) to [out=240,in=0] (0,0);
\draw [very thick, directed=.55] (1.25,.5) to [out=120,in=0] (0,1);
 \node at (-.875,1) {\small$n{-}a$};
 \node at (-.5,0) {\small$a$};
 \node at (2.75,.5) {\small$n$}; 
\end{tikzpicture}};
\endxy 
\end{aligned}
\end{equation}
to assign a complex $C_n(\cal{L})$ in $\hTr(\Foam{n}{})$ to any framed annular link 
(recall that edges of webs in $\Foam{n}{}$ carry labels in the set $\{0,1,\ldots,n\}$).
More generally, we can use Rickard complexes \cite{CR}, 
in particular the shifted Rickard complexes\footnote{These complexes live in $\Ucatc_Q(\glm)$, 
so this formally requires using trace decategorifications of the extended 2-functor $\Ucatc_Q(\glm)\xrightarrow{\Phi_n} n\cat{Foam}$. 
As we mentioned in Remark \ref{rem:Ucheck}, $\grvTr(\Ucatc_Q(\glm)) \cong \grvTr(\Ucat_Q(\glm))$, so the consideration of $\Ucatc_Q(\glm)$
is only technical in nature.} 
from \cite[equations (4.1) and (4.2)]{QR}, 
and the general form of equation \eqref{eq:CapsAndCups} to assign a complex $C_n(\cal{L})$ in $\hTr(\Foam{n}{})$ 
to any framed annular link whose components are colored by fundamental\footnote{Even more generally, we can use Cautis-Rozansky 
categorified projectors (see \cite{Roz,Rose,Cautis}) to assign a complex to an annular link with components colored by arbitrary irreducible 
representations of $\sln$.} representations of $\sln$, 
\eg for $b\geq a$
\[
\left \llbracket
\xy
(0,0)*{
\begin{tikzpicture} [scale=.5]
\draw[very thick, directed=.99] (1,0) to (0,1);
\draw[very thick] (.6,.6) to (1,1);
\draw[very thick,directed=.99] (.4,.4) to (0,0);
\node at (1.25,1) {\tiny$a$};
\node at (1.25,0) {\tiny$b$};
\end{tikzpicture}
}
\endxy
\right \rrbracket_n
=
\left(
q^{-a} \;
\xy
(0,0)*{
\begin{tikzpicture}[scale=.4]
\draw[very thick, directed=.65] (1.5,0) to (.5,0);
\draw[very thick, directed=.65] (.5,0) to [out=240,in=0] (-1.5,0);
\draw[very thick, directed=.65] (.5,0) to [out=120,in=300] (-.5,1);
\draw[very thick, directed=.65] (1.5,1) to [out=180,in=60] (-.5,1);
\draw[very thick, directed=.65] (-.5,1) to (-1.5,1);
\node at (1.75,1) {\tiny$a$};
\node at (1.75,0) {\tiny$b$};
\node at (-1.75,1) {\tiny$b$};
\node at (-1.75,0) {\tiny$a$};
\end{tikzpicture}
};
\endxy
\longrightarrow
q^{-a+1} \;
\xy
(0,0)*{
\begin{tikzpicture}[scale=.4]
\draw[very thick, directed=.65] (2,1) to (1.25,1);
\draw[very thick, directed=.65] (2,0) to [out=180,in=300] (.5,0);
\draw[very thick, directed=.65] (1.25,1) to [out=240,in=60] (.5,0);
\draw[very thick, directed=.65] (.5,0) to (-.5,0);
\draw[very thick, directed=.65] (1.25,1) to [out=120,in=60] (-1.25,1);
\draw[very thick, directed=.65] (-.5,0) to [out=120,in=300] (-1.25,1);
\draw[very thick, directed=.65] (-.5,0) to [out=240,in=0] (-2,0);
\draw[very thick, directed=.65] (-1.25,1) to (-2,1);
\node at (1.25,.5) {\tiny$1$};
\node at (2.25,1) {\tiny$a$};
\node at (2.25,0) {\tiny$b$};
\node at (-2.25,1) {\tiny$b$};
\node at (-2.25,0) {\tiny$a$};
\end{tikzpicture}
};
\endxy
\longrightarrow \cdots \longrightarrow
\uwave{
\xy
(0,0)*{
\begin{tikzpicture}[scale=.4]
\draw[very thick, directed=.65] (2,1) to (1.25,1);
\draw[very thick, directed=.65] (2,0) to [out=180,in=300] (.5,0);
\draw[very thick, directed=.65] (1.25,1) to [out=180,in=60] (.5,0);
\draw[very thick, directed=.65] (.5,0) to (-.5,0);
\draw[very thick, directed=.65] (-.5,0) to [out=120,in=0] (-1.25,1);
\draw[very thick, directed=.65] (-.5,0) to [out=240,in=0] (-2,0);
\draw[very thick, directed=.65] (-1.25,1) to (-2,1);
\node at (2.25,1) {\tiny$a$};
\node at (2.25,0) {\tiny$b$};
\node at (-2.25,1) {\tiny$b$};
\node at (-2.25,0) {\tiny$a$};
\end{tikzpicture}
};
\endxy
}
\right).
\]
As in the $\slnn{2}$ case, the complex $C_n(\cal{L})$ is invariant under Reidemeister moves, 
up to homotopy equivalence (and up to tensoring with essential $n$-labeled circles, which won't affect the resulting homology).
To see that step (3) carries over to the $\sln$ case, simply repeat the arguments in Lemma \ref{lem:sl2Rep} with $2$ replaced by $n$.

Step (2), however, is more difficult in the $\sln$ case. 
Recall, the crucial fact was the equivalence of categories between $\grvTr(\Foam{2}{})$ and $\hTr(\Foam{2}{})$, 
and we are currently unable to extend this result to the $\sln$ setting.
Nevertheless, we can show that it holds ``up to homotopy,'' \ie we have the following,
which suffices to lift the complex $C_n(\cal{L})$ to a complex $\tilde{C}_n(\cal{L})$ in $\grvTr(\Foam{n}{})$. 

\begin{prop} \label{prop:equivHomCat}
There is an equivalence of categories $\Kom(\hTr(\Foam{n}{})) \cong \Kom(\grvTr(\Foam{n}{}))$, 
where $\Kom(-)$ denotes the homotopy category of bounded complexes over an additive category.
\end{prop}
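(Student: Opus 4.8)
The plan is to show that the canonical inclusion $\iota$ of Proposition~\ref{prop:hTrReduction} --- whose construction goes through verbatim with $\Foam{2}{}$ replaced by $\Foam{n}{}$, since degrees of morphisms are computed locally in both categories --- becomes an equivalence after passing to homotopy categories. Since $\iota\colon \grvTr(\Foam{n}{}) \hookrightarrow \hTr(\Foam{n}{})$ is a degree-preserving, fully faithful additive functor, the induced functor $\Kom(\iota)$ is automatically fully faithful: a fully faithful additive functor is a bijection on Hom-sets, hence a bijection both on chain maps between any fixed pair of bounded complexes and on chain homotopies between them, and therefore a bijection on $\Hom$-sets in $\Kom$. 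Thus the entire content of the proposition is the essential surjectivity of $\Kom(\iota)$.

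First I would isolate the combinatorial core. Recall that the objects in the essential image of $\iota$ are exactly the direct sums of grading shifts of nested/concentric-circle webs, i.e.\ annular closures of identity $1$-morphisms (straight-line webs). The key step will be the following ``annular evaluation'' statement --- the analogue, weakened to the homotopy category, of the web decomposition \eqref{eq:sl2webiso} used in the $n=2$ case, and the referenced Lemma~\ref{lem:annEvalAlgo}: \emph{every annular enhanced $\sln$ web, regarded as a complex concentrated in degree zero, is homotopy equivalent in $\hTr(\Foam{n}{})$ to a bounded complex with concentric-circle terms.} Granting this, essential surjectivity follows by a standard convolution argument: an arbitrary object $C$ of $\Kom(\hTr(\Foam{n}{}))$ is an iterated mapping cone of the (shifted) one-term complexes $W^i[-i]$ on its terms $W^i$, the mapping cone is invariant up to homotopy under replacing either input by a homotopy-equivalent complex, and so replacing each $W^i$ by its concentric-circle resolution and flattening the resulting --- finite, since $C$ is bounded --- iterated cone produces a bounded complex $D$ of concentric-circle webs with $\Kom(\iota)(D) \simeq C$.

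The proof of the annular evaluation lemma is where I expect the real difficulty, and where the $n=2$ argument genuinely fails: for $n \geq 3$ the local web isomorphisms \eqref{eq:sl2webiso} no longer suffice to collapse a general annular web to concentric circles. The plan is an induction on a complexity measure of the web $W$ (number of trivalent vertices, refined by the essential-face data): if any genuine web isomorphism or direct-sum decomposition coming from the relations of $\Foam{n}{}$ applies, use it to decrease the complexity or to express $W$ as a direct sum of strictly simpler webs; if $W$ is ``reduced'' but still not a union of concentric circles, use the Rickard complexes of \cite{CR} together with the categorified skein relations \eqref{eq:slnSkein}--\eqref{eq:CapsAndCups} to write $W$ up to homotopy as a bounded complex of strictly simpler webs --- concretely, inserting a cancelling pair of crossings on an appropriately chosen essential edge of $W$ yields a contractible complex whose differential realises $W$ as an iterated cone of webs in which the obstructing essential configuration has been locally broken. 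Because these complexes are naturally defined in $\Ucatc_Q(\glm)$, this step formally uses the extended $2$-functor $\Ucatc_Q(\glm) \xrightarrow{\Phi_n} n\cat{Foam}$ and the identification $\grvTr(\Ucatc_Q(\glm)) \cong \grvTr(\Ucat_Q(\glm))$ from Remark~\ref{rem:Ucheck}.

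The point requiring care --- and the main obstacle --- is verifying that the complexity strictly decreases at each non-isomorphism step, so that the recursion terminates and produces a \emph{bounded} complex; this amounts to a finite but intricate case analysis of reduced annular $\sln$ webs, and is the place where I would expect to spend the bulk of the effort.
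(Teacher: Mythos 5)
Your reduction to essential surjectivity of $\Kom(\iota)$ is correct and consistent with the paper's strategy, and you have correctly isolated the key lemma: every annular $\sln$ web must be shown homotopy equivalent in $\Kom(\hTr(\Foam{n}{}))$ to a bounded complex whose terms are nested labeled circles. But your proposed proof of that lemma has a genuine gap, precisely at the point you yourself flag as the main obstacle. The mechanism you suggest --- inserting a cancelling pair of crossings and invoking Rickard complexes --- goes the wrong way: the homotopy equivalence between $R_+\otimes R_-$ and the identity replaces a local identity configuration by a complex whose terms carry \emph{extra} rungs, so it does not manifestly decrease any complexity measure; and a contractible complex having $W$ as one of its terms does not express $W$ up to homotopy in terms of the remaining terms. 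More importantly, you have not identified the actual obstruction, which the paper isolates precisely: the rung-switch relation \eqref{eq:FEswitch_decat} lifts to a direct sum decomposition \eqref{eq:FEswitch_cat} in $\Foam{n}{}$ only when $k-l\geq j_1-j_2$; otherwise the quantum binomial coefficients are negative, and the web on the left-hand side is the indecomposable one, occurring as a \emph{summand} of the right-hand side rather than decomposing into it.

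The paper's resolution of this is elementary but indispensable. Using ${a \brack b}=(-1)^b{-a+b-1 \brack b}$ one rewrites \eqref{eq:FEswitch_decat} with all coefficients positive, yielding an honest isomorphism of the form $A\oplus B\cong C$ in $\hTr(\Foam{n}{})$ (equation \eqref{eq:FEswitch_Ho}); one then uses the observation that, given $A\oplus B\cong C$, any bounded complex with a summand $A$ in homological degree $i$ is homotopy equivalent to one obtained by adjoining $B$ in degree $i-1$ and replacing $A\oplus B$ by $C$ in degree $i$ (adjoin the contractible complex $B\xrightarrow{\id}B$ and regroup). This is exactly what lets the annular evaluation algorithm of Lemma \ref{lem:annEvalAlgo} --- whose structure (slides \eqref{easywebmoves}, the leftmost-upright induction, the untrapping of rungs) you should follow, not merely cite --- be run term by term on a complex, trading the failed decompositions for summands in adjacent homological degrees; termination is inherited from the decategorified algorithm, since edge labels are bounded by $n$ times the number of uprights. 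Without this idea your recursion has no valid step to perform when the obstructing configuration arises, and no termination argument.
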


In order to prove this, we first detail the decategorified version of this result. 
The setting for this is the $\Z[q,q^{-1}]$-module of Cautis, Kamnitzer, and Morrison's $\sln$ webs \cite{CKM} embedded in the annulus. 
We believe this result is of independent interest; see \eg \cite{QS} and \cite{RTub} where this result is used in the study of the 
HOMFLY-PT polynomial via a doubled Schur algebra and a symmetric web formulation of the colored Jones polynomial (respectively).

\begin{lem}[Annular web evaluation algorithm] \label{lem:annEvalAlgo}
The annular closure of any $\sln$ web is equal to a $\Z[q,q^{-1}]$-linear combination of nested, labeled essential circles.
\end{lem}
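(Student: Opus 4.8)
The plan is to establish the lemma by exhibiting an explicit, terminating \emph{annular evaluation algorithm}: a rewriting procedure that uses only planar isotopy in the annulus together with the local $\sln$ web relations of \cite{CKM} (circle removal $\bigcirc_k = \qbin{n}{k}$, digon removal, and the square/associativity relations) to reduce an arbitrary annular web $W$ to a $\Z[q,q^{-1}]$-linear combination of concentric labelled essential circles. The argument is an induction whose principal invariant is $v(W)$, the number of trivalent vertices of $W$. When $v(W)=0$ the web is a disjoint union of simple closed curves in the annulus: each contractible circle of label $k$ is replaced by the coefficient $\qbin{n}{k}$ via circle removal, and the remaining essential circles, being disjoint and parallel, can be radially rescaled into standard concentric position; this is the base case.

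For the inductive step, present $W$ as the annular closure of a web tangle $T$ in a rectangle obtained by cutting the annulus along a radial arc, so that the web of $T$ meets the two vertical edges in a common labelled, oriented sequence $\underline{k}$ and avoids the horizontal edges; after using the ladder presentation and pivotal structure of \cite{CKM}, isotope $T$ into a vertical stack of elementary rungs placed at distinct heights. If $v(W)>0$, a face-counting argument — the Euler-characteristic computation for the annulus, exactly as invoked for the moves \eqref{eq:sl2webiso} in the $n=2$ case (\cf also the $a=1$ specialization of \eqref{eq:CapsAndCups}) and as in the classical MOY calculus — produces a face of $W$ that is an embedded disk bounded by a monogon, a digon, or a square. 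Applying circle removal, digon removal, or the square/associativity relation at such a face rewrites $W$ as a $\Z[q,q^{-1}]$-linear combination of annular webs, whose \emph{leading} terms either have strictly fewer trivalent vertices or have the same number of vertices but lie ``closer'' to a stack of concentric circles; iterating and invoking the inductive hypothesis finishes the proof.

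The step I expect to be the main obstacle is \textbf{termination} when only square/associativity moves are available, since these do not decrease $v(W)$. The proposal here is to equip annular webs with a lexicographic well-ordering $(v(W),\, \iota(W))$, where $\iota(W)$ is a secondary complexity — for instance the number of ``inversions'' in the ladder word of $T$ with respect to a fixed total order on the rung types, equivalently the PBW-straightening distance of that word in $U(\glm)$ under skew Howe duality, or alternatively a weighted count of non-contractible faces — and to verify that every square/associativity move either exposes a new disk-bounded digon (so $v$ drops) or strictly decreases $\iota(W)$. This is the annular analogue of the usual proof that PBW monomials span $U(\glm)$, combined with the sphere-web evaluation of \cite{CKM}; one must additionally check that the finitely many genuinely wrapping configurations (a theta-graph or a pinched circle around the core) always possess a disk-bounded face, which follows since such a graph has negative Euler characteristic while the annulus has Euler characteristic zero. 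For $n=2$ this recovers \eqref{eq:sl2webiso}, for $n=3$ it is Kuperberg's annular $A_2$-spider calculus, and the argument is uniform in $n$; the resulting normal form is of independent interest (\cf \cite{QS,RTub}).
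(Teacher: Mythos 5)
There is a genuine gap, and it sits exactly where you flagged trouble, plus one step earlier. First, the face-counting step fails in the annulus: for a trivalent web $W$ the computation $\chi(\cal{A}) = \chi(W) + \sum_i \chi(R_i)$ with $\chi(\cal{A})=0$ only shows that the \emph{average} number of sides of a contractible complementary region is at most $6$, not at most $4$ as in the disk. Already for $n=3$ there are closed annular webs (e.g.\ a hexagonal ``nanotube'' pattern wrapped around the core) all of whose disk faces are hexagons, so no monogon, digon, or square move is available; and for general $n$ the CKM relations are not organized by face size in the first place, so ``monogon/digon/square'' is not the right dichotomy. Second, the termination scheme is asserted rather than proved: the square-switch relation \eqref{eq:FEswitch_decat} replaces $E^{(j_1)}F^{(j_2)}$ by $F^{(j_2)}E^{(j_1)}$ plus lower-order terms, which preserves the number of trivalent vertices of the leading term; and for the \emph{cyclic} word attached to an annular closure an ``inversion count'' or PBW-straightening distance is not obviously well defined, let alone strictly decreasing (this is precisely the sense in which the trace of $\U_q(\glm)$ is subtler than $\U_q(\glm)$ itself). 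Your proposal explicitly defers the verification of this monotonicity, and that verification is the crux of the lemma.

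The paper's proof avoids both problems by a different mechanism. After passing to ladder form via \cite[Theorem 5.3.1]{CKM} and merging parallel rungs with \eqref{easywebmoves}, it frees the \emph{leftmost upright}: one locates a rightward rung followed by a leftward rung in the first column and applies \eqref{eq:FEswitch_decat}, and each application either decreases the number of rungs meeting the leftmost upright or strictly \emph{increases a label on that upright}; since edge labels are bounded above, this terminates. The labels on the innermost upright, not the vertex count, are the correct complexity measure. The genuinely delicate point, which your sketch does not address at all, is that the chosen pair of rungs may be separated by ``trapped'' rungs in the adjacent column that cannot be slid out of the way (even after sliding around the annulus); the paper handles this by an inner induction on the number of trapped rungs. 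To repair your argument you would need to either replace the face-counting step by the ladder-rung analysis, or exhibit a secondary invariant that provably decreases under \eqref{eq:FEswitch_decat} read cyclically; as written, neither is in place.
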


Note that in order to consider the annular closure of a web, the labelings on the top and bottom web endpoints must agree.

\begin{proof}
By \cite[Theorem 5.3.1]{CKM}, it suffices to consider ``ladder webs,'' and our argument proceeds by increasing the labelings on the 
leftmost ladder upright, until this upright is freed from the remainder of the web closure. The result then follows inductively.

To begin, we can use the following web relations to combine as many consecutive ladder rungs pointing in the 
same direction as possible:
\begin{equation}\label{easywebmoves}
\xy
(0,0)*{
\begin{tikzpicture}[scale=.55]
\draw [very thick, ->] (0,0) -- (0,3);
\draw [very thick, ->] (1,0) -- (1,3);
\draw [very thick, directed=.55] (0,.75) -- (1,1.25);
\node at (.5,.625) {\tiny $a$};
\draw [very thick, directed=.55] (0,1.75) -- (1,2.25);
\node at (.5,2.375) {\tiny $b$};
\end{tikzpicture}
};
\endxy
=
{a+b \brack a}
\;\;
\xy
(0,0)*{
\begin{tikzpicture}[scale=.55]
\draw [very thick, ->] (0,0) -- (0,3);
\draw [very thick, ->] (1,0) -- (1,3);
\draw [very thick, directed=.55] (0,1.25) -- (1,1.75);
\node at (.5,1.125) {\tiny $a{+}b$};
\end{tikzpicture}
};
\endxy
\quad,\quad
\xy
(0,0)*{
\begin{tikzpicture}[scale=.55]
\draw [very thick, ->] (0,0) -- (0,3);
\draw [very thick, ->] (1,0) -- (1,3);
\draw [very thick, directed=.55] (1,.75) -- (0,1.25);
\node at (.5,.62) {\tiny $a$};
\draw [very thick, directed=.55] (1,1.75) -- (0,2.25);
\node at (.5,2.375) {\tiny $b$};
\end{tikzpicture}
};
\endxy
=
{a+b \brack a}
\;\;
\xy
(0,0)*{
\begin{tikzpicture}[scale=.55]
\draw [very thick, ->] (0,0) -- (0,3);
\draw [very thick, ->] (1,0) -- (1,3);
\draw [very thick, directed=.55] (1,1.25) -- (0,1.75);
\node at (.5,1.125) {\tiny $a{+}b$};
\end{tikzpicture}
};
\endxy \\
\quad , \quad
\xy
(0,0)*{
\begin{tikzpicture}[scale=.55]
\draw[very thick, ->] (0,0) -- (0,3);
\draw[very thick, ->] (1,0) -- (1,3);
\draw[very thick, ->] (2,0) -- (2,3);
\draw [very thick, directed=.55] (0,.75) -- (1,1.25);
\draw [very thick, directed=.55] (2,1.75) -- (1,2.25);
\end{tikzpicture}
};
\endxy
=
\xy
(0,0)*{
\begin{tikzpicture}[scale=.55]
\draw[very thick, ->] (0,0) -- (0,3);
\draw[very thick, ->] (1,0) -- (1,3);
\draw[very thick, ->] (2,0) -- (2,3);
\draw [very thick, directed=.55] (0,1.75) -- (1,2.25);
\draw [very thick, directed=.55] (2,.75) -- (1,1.25);
\end{tikzpicture}
};
\endxy
\quad,\quad
\xy
(0,0)*{
\begin{tikzpicture}[scale=.55]
\draw[very thick, ->] (0,0) -- (0,3);
\draw[very thick, ->] (1,0) -- (1,3);
\draw[very thick, ->] (2,0) -- (2,3);
\draw [very thick, directed=.55] (1,.75) -- (2,1.25);
\draw [very thick, directed=.55] (1,1.75) -- (0,2.25);
\end{tikzpicture}
};
\endxy
=
\xy
(0,0)*{
\begin{tikzpicture}[scale=.55]
\draw[very thick, ->] (0,0) -- (0,3);
\draw[very thick, ->] (1,0) -- (1,3);
\draw[very thick, ->] (2,0) -- (2,3);
\draw [very thick, directed=.55] (1,1.75) -- (2,2.25);
\draw [very thick, directed=.55] (1,.75) -- (0,1.25);
\end{tikzpicture}
};
\endxy
\end{equation}

Next, we consider the leftmost upright of the ladder web. If no rungs touch it, then it already yields a nested circle, and we are done (by inducting on the number of uprights).
If not, locate rungs between the leftmost and second-to-leftmost uprights which successively point to the right, then to the left, 
\ie arranged as in the web on the left-hand side of equation \eqref{eq:FEswitch_decat} below, 
but possibly with other rungs between the second-to- and third-to-leftmost upright separating them. 
Such a pair of rungs necessarily exists, since the web is an annular closure (this may require sliding rungs ``around the annulus'').

We now aim to use the following relation:
\begin{equation}\label{eq:FEswitch_decat}
\xy
(0,0)*{
\begin{tikzpicture}[scale=.3]
	\draw [very thick, directed=.55] (-2,-4) to (-2,-2);
	\draw [very thick, directed=1] (-2,-2) to (-2,0.25);
	\draw [very thick, directed=.55] (2,-4) to (2,-2);
	\draw [very thick, directed=1] (2,-2) to (2,0.25);
	\draw [very thick, directed=.55] (-2,-2.5) to (2,-1.5);
	\draw [very thick] (-2,0.25) to (-2,2);
	\draw [very thick, directed=.55] (-2,2) to (-2,4);
	\draw [very thick] (2,0.25) to (2,2);
	\draw [very thick, directed=.55] (2,2) to (2,4);
	\draw [very thick, rdirected=.55] (-2,2.5) to (2,1.5);
	\node at (-2,-4.5) {\tiny $k$};
	\node at (2,-4.5) {\tiny $l$};
	\node at (-2.25,4.5) {\tiny $k{-}j_1{+}j_2$};
	\node at (2.25,4.5) {\tiny $l{+}j_1{-}j_2$};
	\node at (-3.5,0) {\tiny $k{-}j_1$};
	\node at (3.5,0) {\tiny $l{+}j_1$};
	\node at (0,-1.25) {\tiny $j_1$};
	\node at (0,2.75) {\tiny $j_2$};
\end{tikzpicture}
};
\endxy=\sum_{j^{\prime} = 0}^{\min(j_1,j_2)} {k-j_1-l+j_2 \brack j^{\prime}}\xy
(0,0)*{
\begin{tikzpicture}[scale=.3]
	\draw [very thick, directed=.55] (-2,-4) to (-2,-2);
	\draw [very thick, directed=1] (-2,-2) to (-2,0.25);
	\draw [very thick, directed=.55] (2,-4) to (2,-2);
	\draw [very thick, directed=1] (2,-2) to (2,0.25);
	\draw [very thick, rdirected=.55] (-2,-1.5) to (2,-2.5);
	\draw [very thick] (-2,0.25) to (-2,2);
	\draw [very thick, directed=.55] (-2,2) to (-2,4);
	\draw [very thick] (2,0.25) to (2,2);
	\draw [very thick, directed=.55] (2,2) to (2,4);
	\draw [very thick, directed=.55] (-2,1.5) to (2,2.5);
	\node at (-2,-4.5) {\tiny $k$};
	\node at (2,-4.5) {\tiny $l$};
	\node at (-2.25,4.5) {\tiny $k{-}j_1{+}j_2$};
	\node at (2.25,4.5) {\tiny $l{+}j_1{-}j_2$};
	\node at (-4.25,0) {\tiny $k{+}j_2{-}j^{\prime}$};
	\node at (4.25,0) {\tiny $l{-}j_2{+}j^{\prime}$};
	\node at (0,-1.25) {\tiny $j_2{-}j^{\prime}$};
	\node at (0,2.75) {\tiny $j_1{-}j^{\prime}$};
\end{tikzpicture}
};
\endxy
\end{equation}
to express our web in terms of webs where either one label on the leftmost upright is larger than our web (and all others are the same), 
or where the number of rungs between the leftmost and second-to-leftmost uprights decreases.
Iterating this procedure, it eventually terminates, since there is a maximal possible labeling of an edge in our web (a crude bound is $n$ multiplied by the number of uprights), 
and expresses our web as a linear combination of webs where there are no rungs connected to the leftmost upright.

We now show that it is always possible to apply equation \eqref{eq:FEswitch_decat} to our chosen pair of rungs, 
after possibly expressing our web as a linear combination of webs 
using relations which affect neither the labels on the leftmost upright nor the number of rungs connected to the leftmost upright. 
Consider the set of rungs which are ``trapped'' between our chosen rungs, \ie those rungs which cannot be moved using the latter two ``slide'' relations in equation \eqref{easywebmoves} 
to the part of the web outside the region between our two rungs. 
For example, the red rungs in the following web are trapped, while the blue ones are not:
\[
\xy
(0,0)*{
\begin{tikzpicture}[scale=.35]
\draw [very thick, ->] (0,0) -- (0,13);
\draw [very thick, ->] (1,0) -- (1,13);
\draw [very thick, ->] (2,0) -- (2,13);
\draw [very thick, ->] (3,0) -- (3,13);
\draw [very thick, ->] (4,0) -- (4,13);
\draw [very thick, directed=.65] (0,.75) -- (1,1.25);
\draw [very thick, directed=.65, red] (1,1.75) -- (2,2.25);
\draw [very thick, directed=.65, blue] (3,1.75) -- (4,2.25);
\draw [very thick, directed=.65, red] (2,2.75) -- (3,3.25);
\draw [very thick, directed=.65, red] (3,3.75) -- (4,4.25);
\draw [very thick, directed=.65, red] (2,4.75) -- (3,5.25);
\draw [very thick, directed=.65, red] (2,5.75) -- (1,6.25);
\draw [very thick, directed=.65, red] (3,6.75) -- (2,7.25);
\draw [very thick, directed=.65, red] (4,7.75) -- (3,8.25);
\draw [very thick, directed=.65, blue] (3,8.75) -- (4,9.25);
\draw [very thick, directed=.65, red] (3,9.75) -- (2,10.25);
\draw [very thick, directed=.65, red] (2,10.75) -- (1,11.25);
\draw [very thick, directed=.65, black] (1,11.75) -- (0,12.25);
\end{tikzpicture}
};
\endxy
\]

We'll now apply relations to express our web in terms of those with fewer trapped rungs, noting that when there are no trapped rungs, we can apply 
equation \eqref{eq:FEswitch_decat} as desired.
Begin by using slide relations to move away all untrapped rungs. 
Next, find a leftward oriented trapped rung which can be slid down so that there are no leftward oriented rungs 
anywhere between this rung and the rightward oriented rung in the chosen pair (\ie no leftward oriented rungs below it).
Sliding this rung as far down as possible, we must arrive at the configuration given on the left-hand side of equation \eqref{eq:FEswitch_decat}. 
Applying this relation, we obtain webs with either fewer trapped rungs, or with a leftward oriented rung which can be slid even further down. 
Repeating this procedure eventually allows us to slide the lowest leftward oriented rung out the bottom of the region (\ie it becomes untrapped). 
\end{proof}

Unfortunately, this process doesn't directly lift to the categorified case of $\hTr(\Foam{n}{})$.
Indeed, we can try to mimic the above argument to express any annular web closure as a graded direct sum of essential circles. 
However, the categorical analog of equation \eqref{eq:FEswitch_decat}:
\begin{equation}\label{eq:FEswitch_cat}
\xy
(0,0)*{
\begin{tikzpicture}[scale=.3]
	\draw [very thick, directed=.55] (-2,-4) to (-2,-2);
	\draw [very thick, directed=1] (-2,-2) to (-2,0.25);
	\draw [very thick, directed=.55] (2,-4) to (2,-2);
	\draw [very thick, directed=1] (2,-2) to (2,0.25);
	\draw [very thick, directed=.55] (-2,-2.5) to (2,-1.5);
	\draw [very thick] (-2,0.25) to (-2,2);
	\draw [very thick, directed=.55] (-2,2) to (-2,4);
	\draw [very thick] (2,0.25) to (2,2);
	\draw [very thick, directed=.55] (2,2) to (2,4);
	\draw [very thick, rdirected=.55] (-2,2.5) to (2,1.5);
	\node at (-2,-4.5) {\tiny $k$};
	\node at (2,-4.5) {\tiny $l$};
	\node at (-2.25,4.5) {\tiny $k{-}j_1{+}j_2$};
	\node at (2.25,4.5) {\tiny $l{+}j_1{-}j_2$};
	\node at (-3.5,0) {\tiny $k{-}j_1$};
	\node at (3.5,0) {\tiny $l{+}j_1$};
	\node at (0,-1.25) {\tiny $j_1$};
	\node at (0,2.75) {\tiny $j_2$};
\end{tikzpicture}
};
\endxy
\cong
\bigoplus_{j' = 0}^{\min(j_1,j_2)}
\bigoplus_{{k{-}j_1{-}l{+}j_2 \brack j^{\prime}}}
\xy
(0,0)*{
\begin{tikzpicture}[scale=.3]
	\draw [very thick, directed=.55] (-2,-4) to (-2,-2);
	\draw [very thick, directed=1] (-2,-2) to (-2,0.25);
	\draw [very thick, directed=.55] (2,-4) to (2,-2);
	\draw [very thick, directed=1] (2,-2) to (2,0.25);
	\draw [very thick, rdirected=.55] (-2,-1.5) to (2,-2.5);
	\draw [very thick] (-2,0.25) to (-2,2);
	\draw [very thick, directed=.55] (-2,2) to (-2,4);
	\draw [very thick] (2,0.25) to (2,2);
	\draw [very thick, directed=.55] (2,2) to (2,4);
	\draw [very thick, directed=.55] (-2,1.5) to (2,2.5);
	\node at (-2,-4.5) {\tiny $k$};
	\node at (2,-4.5) {\tiny $l$};
	\node at (-2.25,4.5) {\tiny $k{-}j_1{+}j_2$};
	\node at (2.25,4.5) {\tiny $l{+}j_1{-}j_2$};
	\node at (-4.25,0) {\tiny $k{+}j_2{-}j^{\prime}$};
	\node at (4.25,0) {\tiny $l{-}j_2{+}j^{\prime}$};
	\node at (0,-1.25) {\tiny $j_2{-}j^{\prime}$};
	\node at (0,2.75) {\tiny $j_1{-}j^{\prime}$};
\end{tikzpicture}
};
\endxy
\end{equation}
holds in $\Foam{n}{}$ only when $k-l \geq j_1-j_2$, since otherwise the right-hand side of equation \eqref{eq:FEswitch_decat} contains negative numbers.
When this inequality doesn't hold, we cannot replace the web on the left-hand side with a direct sum of webs of the form on the 
right\footnote{The reason is essentially that $F^{(b)}E^{(a)}1_\l$ is not a canonical basis element in $\U_q(\slnn{2})$ when $\l < b-a$.}. 
Instead, the web appears as a term on the right-hand side of the isomorphism
\[
\xy
(0,0)*{
\begin{tikzpicture}[scale=.3]
	\draw [very thick, directed=.55] (-2,-4) to (-2,-2);
	\draw [very thick, directed=1] (-2,-2) to (-2,0.25);
	\draw [very thick, directed=.55] (2,-4) to (2,-2);
	\draw [very thick, directed=1] (2,-2) to (2,0.25);
	\draw [very thick, directed=.55] (2,-2.5) to (-2,-1.5);
	\draw [very thick] (-2,0.25) to (-2,2);
	\draw [very thick, directed=.55] (-2,2) to (-2,4);
	\draw [very thick] (2,0.25) to (2,2);
	\draw [very thick, directed=.55] (2,2) to (2,4);
	\draw [very thick, rdirected=.55] (2,2.5) to (-2,1.5);
	\node at (-2,-4.5) {\tiny $k$};
	\node at (2,-4.5) {\tiny $l$};
	\node at (-2.25,4.5) {\tiny $k{+}j_1{-}j_2$};
	\node at (2.25,4.5) {\tiny $l{-}j_1{+}j_2$};
	\node at (-3.5,0) {\tiny $k{+}j_1$};
	\node at (3.5,0) {\tiny $l{-}j_1$};
	\node at (0,-1.25) {\tiny $j_1$};
	\node at (0,2.75) {\tiny $j_2$};
\end{tikzpicture}
};
\endxy
\cong
\bigoplus_{j' = 0}^{\min(j_1,j_2)}
\bigoplus_{{l{-}j_1{-}k{+}j_2 \brack j^{\prime}}}
\xy
(0,0)*{
\begin{tikzpicture}[scale=.3]
	\draw [very thick, directed=.55] (-2,-4) to (-2,-2);
	\draw [very thick, directed=1] (-2,-2) to (-2,0.25);
	\draw [very thick, directed=.55] (2,-4) to (2,-2);
	\draw [very thick, directed=1] (2,-2) to (2,0.25);
	\draw [very thick, rdirected=.55] (2,-1.5) to (-2,-2.5);
	\draw [very thick] (-2,0.25) to (-2,2);
	\draw [very thick, directed=.55] (-2,2) to (-2,4);
	\draw [very thick] (2,0.25) to (2,2);
	\draw [very thick, directed=.55] (2,2) to (2,4);
	\draw [very thick, directed=.55] (2,1.5) to (-2,2.5);
	\node at (-2,-4.5) {\tiny $k$};
	\node at (2,-4.5) {\tiny $l$};
	\node at (-2.25,4.5) {\tiny $k{+}j_1{-}j_2$};
	\node at (2.25,4.5) {\tiny $l{-}j_1{+}j_2$};
	\node at (-4.25,0) {\tiny $k{-}j_2{+}j^{\prime}$};
	\node at (4.25,0) {\tiny $l{+}j_2{-}j^{\prime}$};
	\node at (0,-1.25) {\tiny $j_2{-}j^{\prime}$};
	\node at (0,2.75) {\tiny $j_1{-}j^{\prime}$};
\end{tikzpicture}
};
\endxy .
\]
Nevertheless, we'll see that we can express the web on the left-hand side of equation \eqref{eq:FEswitch_cat} in terms of those on the right-hand side 
in the homotopy category, which suffices to deduce Proposition \ref{prop:equivHomCat}.

\begin{proof}[Proof of Proposition \ref{prop:equivHomCat}]
Given a complex in $\Kom(\hTr(\Foam{n}{}))$, we'd like to apply the above annular evaluation algorithm 
to each term in the complex, utilizing the categorical analogs of equations \eqref{easywebmoves} and \eqref{eq:FEswitch_decat}. 
However, we need a replacement for the isomorphism in equation \eqref{eq:FEswitch_cat} when $k-l < j_1-j_2$.

In this case, we can use the equality 
$
{a \brack b} = (-1)^b {-a+b-1 \brack b}
$
from \cite[1.3]{Lus4} to rearrange equation \eqref{eq:FEswitch_decat} to give the relation
\begin{align*}
\xy
(0,0)*{
\begin{tikzpicture}[scale=.3]
	\draw [very thick, directed=.55] (-2,-4) to (-2,-2);
	\draw [very thick, directed=1] (-2,-2) to (-2,0.25);
	\draw [very thick, directed=.55] (2,-4) to (2,-2);
	\draw [very thick, directed=1] (2,-2) to (2,0.25);
	\draw [very thick, directed=.55] (-2,-2.5) to (2,-1.5);
	\draw [very thick] (-2,0.25) to (-2,2);
	\draw [very thick, directed=.55] (-2,2) to (-2,4);
	\draw [very thick] (2,0.25) to (2,2);
	\draw [very thick, directed=.55] (2,2) to (2,4);
	\draw [very thick, rdirected=.55] (-2,2.5) to (2,1.5);
	\node at (-2,-4.5) {\tiny $k$};
	\node at (2,-4.5) {\tiny $l$};
	\node at (-2.25,4.5) {\tiny $k{-}j_1{+}j_2$};
	\node at (2.25,4.5) {\tiny $l{+}j_1{-}j_2$};
	\node at (-3.5,0) {\tiny $k{-}j_1$};
	\node at (3.5,0) {\tiny $l{+}j_1$};
	\node at (0,-1.25) {\tiny $j_1$};
	\node at (0,2.75) {\tiny $j_2$};
\end{tikzpicture}
};
\endxy
+
\sum_{\substack{j^{\prime}=1 \\ j^{\prime} \text{odd}}}^{\min(j_1,j_2)}
& {l-k+j_1-j_2+j^{\prime} -1 \brack j^{\prime}}
\xy
(0,0)*{
\begin{tikzpicture}[scale=.3]
	\draw [very thick, directed=.55] (-2,-4) to (-2,-2);
	\draw [very thick, directed=1] (-2,-2) to (-2,0.25);
	\draw [very thick, directed=.55] (2,-4) to (2,-2);
	\draw [very thick, directed=1] (2,-2) to (2,0.25);
	\draw [very thick, rdirected=.55] (-2,-1.5) to (2,-2.5);
	\draw [very thick] (-2,0.25) to (-2,2);
	\draw [very thick, directed=.55] (-2,2) to (-2,4);
	\draw [very thick] (2,0.25) to (2,2);
	\draw [very thick, directed=.55] (2,2) to (2,4);
	\draw [very thick, directed=.55] (-2,1.5) to (2,2.5);
	\node at (-2,-4.5) {\tiny $k$};
	\node at (2,-4.5) {\tiny $l$};
	\node at (-2.25,4.5) {\tiny $k{-}j_1{+}j_2$};
	\node at (2.25,4.5) {\tiny $l{+}j_1{-}j_2$};
	\node at (-4.25,0) {\tiny $k{+}j_2{-}j^{\prime}$};
	\node at (4.25,0) {\tiny $l{-}j_2{+}j^{\prime}$};
	\node at (0,-1.25) {\tiny $j_2{-}j^{\prime}$};
	\node at (0,2.75) {\tiny $j_1{-}j^{\prime}$};
\end{tikzpicture}
};
\endxy \\
&=
\sum_{\substack{j^{\prime}=0 \\ j^{\prime} \text{even}}}^{\min(j_1,j_2)}
{l-k+j_1-j_2+j^{\prime} -1 \brack j^{\prime}}
\xy
(0,0)*{
\begin{tikzpicture}[scale=.3]
	\draw [very thick, directed=.55] (-2,-4) to (-2,-2);
	\draw [very thick, directed=1] (-2,-2) to (-2,0.25);
	\draw [very thick, directed=.55] (2,-4) to (2,-2);
	\draw [very thick, directed=1] (2,-2) to (2,0.25);
	\draw [very thick, rdirected=.55] (-2,-1.5) to (2,-2.5);
	\draw [very thick] (-2,0.25) to (-2,2);
	\draw [very thick, directed=.55] (-2,2) to (-2,4);
	\draw [very thick] (2,0.25) to (2,2);
	\draw [very thick, directed=.55] (2,2) to (2,4);
	\draw [very thick, directed=.55] (-2,1.5) to (2,2.5);
	\node at (-2,-4.5) {\tiny $k$};
	\node at (2,-4.5) {\tiny $l$};
	\node at (-2.25,4.5) {\tiny $k{-}j_1{+}j_2$};
	\node at (2.25,4.5) {\tiny $l{+}j_1{-}j_2$};
	\node at (-4.25,0) {\tiny $k{+}j_2{-}j^{\prime}$};
	\node at (4.25,0) {\tiny $l{-}j_2{+}j^{\prime}$};
	\node at (0,-1.25) {\tiny $j_2{-}j^{\prime}$};
	\node at (0,2.75) {\tiny $j_1{-}j^{\prime}$};
\end{tikzpicture}
};
\endxy
\end{align*}
where all the coefficients appearing are positive. 
It follows that the categorical analog of this equation holds as well, 
so we have an isomorphism
\begin{equation}\label{eq:FEswitch_Ho}
\begin{aligned}
\xy
(0,0)*{
\begin{tikzpicture}[scale=.3]
	\draw [very thick, directed=.55] (-2,-4) to (-2,-2);
	\draw [very thick, directed=1] (-2,-2) to (-2,0.25);
	\draw [very thick, directed=.55] (2,-4) to (2,-2);
	\draw [very thick, directed=1] (2,-2) to (2,0.25);
	\draw [very thick, directed=.55] (-2,-2.5) to (2,-1.5);
	\draw [very thick] (-2,0.25) to (-2,2);
	\draw [very thick, directed=.55] (-2,2) to (-2,4);
	\draw [very thick] (2,0.25) to (2,2);
	\draw [very thick, directed=.55] (2,2) to (2,4);
	\draw [very thick, rdirected=.55] (-2,2.5) to (2,1.5);
	\node at (-2,-4.5) {\tiny $k$};
	\node at (2,-4.5) {\tiny $l$};
	\node at (-2.25,4.5) {\tiny $k{-}j_1{+}j_2$};
	\node at (2.25,4.5) {\tiny $l{+}j_1{-}j_2$};
	\node at (-3.5,0) {\tiny $k{-}j_1$};
	\node at (3.5,0) {\tiny $l{+}j_1$};
	\node at (0,-1.25) {\tiny $j_1$};
	\node at (0,2.75) {\tiny $j_2$};
\end{tikzpicture}
};
\endxy
\oplus
& \left(
\bigoplus_{\substack{j^{\prime}=1 \\ j^{\prime} \text{odd}}}^{\min(j_1,j_2)}
\bigoplus_{{l-k+j_1-j_2+j^{\prime} -1 \brack j^{\prime}}} 
\xy
(0,0)*{
\begin{tikzpicture}[scale=.3]
	\draw [very thick, directed=.55] (-2,-4) to (-2,-2);
	\draw [very thick, directed=1] (-2,-2) to (-2,0.25);
	\draw [very thick, directed=.55] (2,-4) to (2,-2);
	\draw [very thick, directed=1] (2,-2) to (2,0.25);
	\draw [very thick, rdirected=.55] (-2,-1.5) to (2,-2.5);
	\draw [very thick] (-2,0.25) to (-2,2);
	\draw [very thick, directed=.55] (-2,2) to (-2,4);
	\draw [very thick] (2,0.25) to (2,2);
	\draw [very thick, directed=.55] (2,2) to (2,4);
	\draw [very thick, directed=.55] (-2,1.5) to (2,2.5);
	\node at (-2,-4.5) {\tiny $k$};
	\node at (2,-4.5) {\tiny $l$};
	\node at (-2.25,4.5) {\tiny $k{-}j_1{+}j_2$};
	\node at (2.25,4.5) {\tiny $l{+}j_1{-}j_2$};
	\node at (-4.25,0) {\tiny $k{+}j_2{-}j^{\prime}$};
	\node at (4.25,0) {\tiny $l{-}j_2{+}j^{\prime}$};
	\node at (0,-1.25) {\tiny $j_2{-}j^{\prime}$};
	\node at (0,2.75) {\tiny $j_1{-}j^{\prime}$};
\end{tikzpicture}
};
\endxy
\right) \\
& \cong
 \bigoplus_{\substack{j^{\prime}=0 \\ j^{\prime} \text{even}}}^{\min(j_1,j_2)}
\bigoplus_{{l-k+j_1-j_2+j^{\prime} -1 \brack j^{\prime}}}
\xy
(0,0)*{
\begin{tikzpicture}[scale=.3]
	\draw [very thick, directed=.55] (-2,-4) to (-2,-2);
	\draw [very thick, directed=1] (-2,-2) to (-2,0.25);
	\draw [very thick, directed=.55] (2,-4) to (2,-2);
	\draw [very thick, directed=1] (2,-2) to (2,0.25);
	\draw [very thick, rdirected=.55] (-2,-1.5) to (2,-2.5);
	\draw [very thick] (-2,0.25) to (-2,2);
	\draw [very thick, directed=.55] (-2,2) to (-2,4);
	\draw [very thick] (2,0.25) to (2,2);
	\draw [very thick, directed=.55] (2,2) to (2,4);
	\draw [very thick, directed=.55] (-2,1.5) to (2,2.5);
	\node at (-2,-4.5) {\tiny $k$};
	\node at (2,-4.5) {\tiny $l$};
	\node at (-2.25,4.5) {\tiny $k{-}j_1{+}j_2$};
	\node at (2.25,4.5) {\tiny $l{+}j_1{-}j_2$};
	\node at (-4.25,0) {\tiny $k{+}j_2{-}j^{\prime}$};
	\node at (4.25,0) {\tiny $l{-}j_2{+}j^{\prime}$};
	\node at (0,-1.25) {\tiny $j_2{-}j^{\prime}$};
	\node at (0,2.75) {\tiny $j_1{-}j^{\prime}$};
\end{tikzpicture}
};
\endxy
\end{aligned}
\end{equation}
Next note that if we have an isomorphism $A \oplus B \cong C$ between objects in an additive category, 
then any complex of the form
\[
\cdots \to A^{i-1} \to A^i \oplus A \to A^{i+1} \to \cdots
\]
is homotopy equivalent to a complex of the form
\[
\cdots \to A^{i-1} \oplus B \to A^i \oplus C \to A^{i+1} \to \cdots
\]
Given this, we can perform the annular evaluation algorithm as in the decategorified case, 
at each step using either equation \eqref{eq:FEswitch_cat} to replace a term in the complex with a direct sum of terms in the same 
homological degree, or equation \eqref{eq:FEswitch_Ho} to replace a term with two direct sums of terms differing by one in homological degree.
\end{proof}

Again, all steps in the above proof actually lift to the quotient $\Ucat_Q(\glm)^{0\leq}$ of categorified quantum $\glm$, so we have in fact proven the following.

\begin{cor} \label{cor:equivHomCat}
There is an equivalence of categories $\Kom(\hTr(\Ucat_Q(\glm)^{0\leq})) \cong \Kom(\grvTr(\Ucat_Q(\glm)^{0\leq}))$.
\end{cor}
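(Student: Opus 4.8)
The plan is to prove Corollary \ref{cor:equivHomCat} by running the proof of Proposition \ref{prop:equivHomCat} verbatim, but inside $\Ucat_Q(\glm)^{0\leq}$ and its trace categories rather than inside $\Foam{n}{}$. Recall that the proof of Proposition \ref{prop:equivHomCat} rested on exactly three things: the reduction of an arbitrary object to a ladder web, the categorified web moves \eqref{easywebmoves}, and the two forms \eqref{eq:FEswitch_cat}, \eqref{eq:FEswitch_Ho} of the ``$EF$-switch'' isomorphism. I would check that each has an avatar on the categorified quantum $\glm$ side. The reduction to ladder webs is automatic here: a $1$-morphism of $\Ucat_Q(\glm)^{0\leq}$ is by construction a direct sum of composites of the generators $\cal{E}_i\onenn{\mathbf{a}}$, $\cal{F}_i\onenn{\mathbf{a}}$ (and divided powers in the $\Ucatc$ version), which is precisely a ladder web whose uprights record the successive $\glm$ weights --- indeed the $2$-functor $\Phi_n$ of equation \eqref{QRthm} was built so as to send such composites to ladder webs.

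Next I would observe that all the relations used are pulled back from, or hold independently in, $\Ucatc_Q(\glm)$. The moves \eqref{easywebmoves} are the $\Phi_n$-images of the divided-power isomorphisms $\cal{E}_i^{(a)}\cal{E}_i^{(b)}\onenn{\mathbf{a}} \cong \bigoplus_{{a+b \brack a}}\cal{E}_i^{(a+b)}\onenn{\mathbf{a}}$ (and its $\cal{F}$-counterpart) together with the far-commutation isomorphisms $\cal{E}_i\cal{E}_j \cong \cal{E}_j\cal{E}_i$ for $|i-j| > 1$; the isomorphism \eqref{eq:FEswitch_cat}, valid whenever the weight inequality $k-l \geq j_1-j_2$ makes all exponents non-negative, is the $\Phi_n$-image of the categorified straightening relation for $\cal{F}_i^{(j_1)}\cal{E}_i^{(j_2)}\onel$. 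These hold already in $\Ucatc_Q(\glm)$, hence in $\hTr(\Ucatc_Q(\glm)^{0\leq})$ and $\grvTr(\Ucatc_Q(\glm)^{0\leq})$, and by Remark \ref{rem:Ucheck} in $\grvTr(\Ucat_Q(\glm)^{0\leq})$. The homotopy-level relation \eqref{eq:FEswitch_Ho} was derived in the proof of Proposition \ref{prop:equivHomCat} from the \emph{decategorified} identity \eqref{eq:FEswitch_decat} purely by the algebraic rewriting ${a \brack b} = (-1)^b {-a+b-1 \brack b}$ and the general principle that a positive-coefficient identity among classes lifts to a direct-sum decomposition; I would simply re-observe that this derivation takes place in any additive category where \eqref{eq:FEswitch_decat} holds at the level of classes and \eqref{eq:FEswitch_cat} holds as a genuine direct-sum decomposition, both of which are true for the trace categories of $\Ucat_Q(\glm)^{0\leq}$.

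With these ingredients in hand, the algorithm proceeds as in Proposition \ref{prop:equivHomCat}: given a bounded complex in $\Kom(\hTr(\Ucat_Q(\glm)^{0\leq}))$, I would free the leftmost ladder upright one step at a time, at each step using either \eqref{eq:FEswitch_cat} to replace a term by a direct sum of terms in the same homological degree, or \eqref{eq:FEswitch_Ho} together with the elementary fact that an isomorphism $A \oplus B \cong C$ turns a complex $\cdots \to A^{i-1} \to A^i \oplus A \to A^{i+1} \to \cdots$ into a homotopy equivalent complex $\cdots \to A^{i-1} \oplus B \to A^i \oplus C \to A^{i+1} \to \cdots$, to replace a term by two direct sums of terms one apart in homological degree. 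The process terminates because the labels on the uprights are bounded in terms of the fixed $\glm$ weight (the entries being non-negative in $\Ucat_Q(\glm)^{0\leq}$), and it produces a complex all of whose terms lie in the full subcategory $\grvTr(\Ucat_Q(\glm)^{0\leq}) \hookrightarrow \hTr(\Ucat_Q(\glm)^{0\leq})$, furnishing the quasi-inverse to the inclusion of homotopy categories.

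The main obstacle is the same one encountered in Proposition \ref{prop:equivHomCat}: when $k-l < j_1-j_2$ the naive relation \eqref{eq:FEswitch_cat} is false, so one must instead use \eqref{eq:FEswitch_Ho}, whose legitimacy rests on the positivity of the binomial coefficients ${l-k+j_1-j_2+j'-1 \brack j'}$ occurring there; this positivity is precisely what is verified in the proof of Proposition \ref{prop:equivHomCat} and it carries over verbatim. I expect no further difficulty, and note finally that functoriality of $\hTr$ and $\grvTr$ applied to $\Phi_n$ makes the equivalence obtained here compatible with the foam-level equivalence of Proposition \ref{prop:equivHomCat}, although that compatibility is not needed for the statement.
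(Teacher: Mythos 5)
Your proposal is correct and follows exactly the route the paper takes: the paper's entire argument for this corollary is the one-line observation that every step of the proof of Proposition \ref{prop:equivHomCat} lifts to $\Ucat_Q(\glm)^{0\leq}$, and your write-up is a careful unpacking of that claim (including the right termination argument, namely that non-negativity of the weight entries together with the fixed total $\sum a_i$ bounds the upright labels). No gaps.
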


In fact, we conjecture a stronger result. Although our categorical annular evaluation algorithm given in Proposition \ref{prop:equivHomCat} shows that every annular web 
is homotopy equivalent in $\Kom(\hTr(\Foam{n}{}))$ to a complex\footnote{Using a variation on the last step of the proof of Proposition \ref{prop:equivHomCat}, we can moreover show that 
this complex is concentrated in two adjacent homological degrees.}
in $\Kom(\grvTr(\Foam{n}{}))$, in practice we find that in all examples we've computed the complex is concentrated in homological degree zero.
Equivalently, in examples we find that any annular web closure can be expressed at the decategorified level as a $\N[q,q^{-1}]$-linear sum of nested circles, 
which suggests the following.

\begin{conj}
There is an equivalence of categories $\hTr(\Ucat_Q(\glm)^{0\leq}) \cong \grvTr(\Ucat_Q(\glm)^{0\leq})$, and in particular, an equivalence of 
categories $\hTr(\Foam{n}{}) \cong \grvTr(\Foam{n}{})$.
\end{conj}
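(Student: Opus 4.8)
\emph{Strategy.} The plan is to upgrade the homotopy-categorical statement of Proposition~\ref{prop:equivHomCat} to an honest equivalence of categories. Since Proposition~\ref{prop:hTrReduction} already provides a degree-preserving, fully faithful embedding $\grvTr(\Foam{n}{}) \hookrightarrow \hTr(\Foam{n}{})$ (and likewise with $\Foam{n}{}$ replaced by $\Ucat_Q(\glm)^{0\leq}$), it suffices to prove essential surjectivity: every annular $\sln$ web must be isomorphic, \emph{already in $\hTr(\Foam{n}{})$ itself}, to a finite direct sum of grading shifts of nested essential circles. I would reduce this to two ingredients: \textbf{(a)} a positivity refinement of the decategorified annular evaluation of Lemma~\ref{lem:annEvalAlgo}, namely that the annular closure of any $\sln$ web equals an $\N[q,q^{-1}]$-linear (not merely $\Z[q,q^{-1}]$-linear) combination of nested essential circles; and \textbf{(b)} a Krull--Schmidt argument showing that \textbf{(a)} lifts to the categorified level without leaving the image of $\grvTr$.

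\emph{The decategorified positivity \textbf{(a)}.} I expect this to be the crux. Simply rerunning the evaluation algorithm of Lemma~\ref{lem:annEvalAlgo} with more care is unlikely to suffice, since the switch relation \eqref{eq:FEswitch_decat} genuinely introduces negative $q$-binomial coefficients in the bad range $k-l < j_1-j_2$, as recorded in \eqref{eq:FEswitch_cat}--\eqref{eq:FEswitch_Ho}. I would instead look for a positive basis of the $\Z[q,q^{-1}]$-module of annular $\sln$ webs in which nested essential circles appear with manifestly positive structure constants. Two natural avenues present themselves: first, to identify this module with a trace of the $\glm$-web category (or a sub/quotient of a ring of symmetric functions) under skew Howe duality and transport a known positive, canonical-type basis; second, to observe that nested essential circles are categorified by honest, non-virtual objects of $\hTr(\Foam{n}{})$ whose endomorphism rings are finite-dimensional, nonnegatively graded, and local, so that $\N[q,q^{-1}]$-positivity would be automatic \emph{provided} one knows every annular web decomposes with no homological correction -- in other words \textbf{(a)} and \textbf{(b)} are really two faces of one statement. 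Concretely, I would try to show that the complex produced in the proof of Proposition~\ref{prop:equivHomCat} admits a Gaussian elimination collapsing it to homological degree zero: its differentials are $\Z$-linear combinations of dotted bubble maps between nested circles, and a careful bookkeeping tying homological degree to $q$-degree (the sign/parity pattern governing the odd versus even sums in \eqref{eq:FEswitch_Ho} is the hint) should force every surviving component to be either invertible or zero.

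\emph{Lifting to the categorified statement \textbf{(b)}.} Granting \textbf{(a)}, I would argue as follows. The $\Hom$-spaces of $\hTr(\Foam{n}{})$ are finite-dimensional in each $q$-degree and nonnegatively graded, with $\C$ sitting in degree zero on each nested essential circle; hence distinct nested circles are pairwise non-isomorphic indecomposables and (the Karoubi envelope of) $\hTr(\Foam{n}{})$ is Krull--Schmidt. By \textbf{(a)}, the class of any annular web in the split Grothendieck group is an $\N[q,q^{-1}]$-combination of classes of nested circles; Krull--Schmidt then forces the web to be isomorphic to the corresponding direct sum of grading-shifted nested circles. As those objects already lie in $\grvTr(\Foam{n}{})$, the embedding of Proposition~\ref{prop:hTrReduction} becomes essentially surjective, hence an equivalence, and the identical argument with $\Ucat_Q(\glm)^{0\leq}$ in place of $\Foam{n}{}$ (as in Corollary~\ref{cor:equivHomCat}) yields the first claim. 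One technical point to verify is that no genuine idempotent completion is needed: the relevant summands must split inside $\hTr(\Foam{n}{})$ and not merely in its Karoubi envelope, which should follow because each such summand is a named object whose projecting idempotent can be written explicitly using the cap/cup foams underlying \eqref{eq:CapsAndCups}.

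\emph{Main obstacle.} The essential difficulty is clearly \textbf{(a)}: producing a positive annular evaluation, equivalently proving that the homological corrections forced by the bad-range switch relation \eqref{eq:FEswitch_cat} always cancel in the final answer. The positive moves \eqref{easywebmoves} and the good-range case of the square-switch are harmless; it is precisely the interplay between the two orientations of the square-switch that must be shown to conspire to an $\N[q,q^{-1}]$-answer. I would attack this through the symmetric-function/positive-basis route above, and expect the clean statement to follow once the right positive basis is in hand, with the direct cancellation argument on the minimal complex serving as a fallback.
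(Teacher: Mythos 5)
The statement you are trying to prove is presented in the paper as a \emph{conjecture}, not a theorem: the authors give no proof, only the remark that in all examples they have computed, the complex produced by the categorical annular evaluation algorithm of Proposition \ref{prop:equivHomCat} collapses to homological degree zero, or equivalently that the decategorified annular evaluation of Lemma \ref{lem:annEvalAlgo} appears to be $\N[q,q^{-1}]$-positive rather than merely $\Z[q,q^{-1}]$-valued. Your proposal reproduces exactly this reduction: full faithfulness of $\grvTr(\Foam{n}{})\hookrightarrow\hTr(\Foam{n}{})$ is Proposition \ref{prop:hTrReduction}, so only essential surjectivity is at issue, and you correctly identify that this hinges on the positivity statement you call \textbf{(a)}. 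But that is precisely where your argument, like the paper's discussion, stops. You label \textbf{(a)} the ``main obstacle'' and offer only candidate strategies (transporting a positive basis through skew Howe duality; Gaussian elimination on the complex from Proposition \ref{prop:equivHomCat}) without carrying either out. The bad-range square-switch \eqref{eq:FEswitch_cat} genuinely forces the virtual corrections recorded in \eqref{eq:FEswitch_Ho}, and nothing in your writeup shows these contributions cancel in the end. What you have is a correct and useful reformulation of the conjecture, not a proof of it.

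Two secondary points in step \textbf{(b)} would also need attention even if \textbf{(a)} were established. First, to pass from an equality of classes in a split Grothendieck group to an isomorphism of objects you need $\hTr(\Foam{n}{})$ (or its Karoubi envelope) to be Krull--Schmidt and you need the nested essential circles to be pairwise non-isomorphic indecomposables with local endomorphism rings; the paper establishes neither, and note that configurations differing only by the nesting order have equal classes, so you must separately show those are honestly isomorphic objects. Second, your claim that no idempotent completion is needed because ``the projecting idempotent can be written explicitly'' is an assertion, not an argument: the conjecture as stated concerns $\hTr(\Foam{n}{})$ itself, and the additive closure built into $\grvTr$ does not automatically supply splittings of idempotents arising on the $\hTr$ side. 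These are repairable, but the essential gap remains \textbf{(a)}, which is an open problem.
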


We do not need this stronger result at the moment. Indeed, Proposition \ref{prop:equivHomCat} allows us to accomplish step \eqref{step2}, 
replacing the complex $C_n(\cal{L})$ in $\hTr(\Foam{n}{})$ assigned to a colored annular link with a homotopy equivalent complex 
$\tilde{C}_n(\cal{L})$ in $\grvTr(\Foam{n}{})$.

\begin{defn}
Let $\cal{L}$ be a colored, framed, annular link. The \emph{sutured annular Khovanov-Rozansky homology} of $\cal{L}$, denoted $\saKhR(\cal{L})$, 
is the homology of the complex $\SH(\tilde{C}_n(\cal{L}))$.
\end{defn}

That $\saKhR(\cal{L})$ is an invariant of colored, framed, annular links follows easily from the fact that applying a 
Reidemeister move\footnote{All links are considered framed, hence we work with the version of the Reidemeister I move for framed links.}
to $\cal{L}$ only changes $C_n(\cal{L})$ up to homotopy equivalence and tensoring with $n$-labeled essential circles, and the choice of $\tilde{C}_n(\cal{L})$ 
is unique up to homotopy equivalence. Neither of these affect the homology of $\SH(\tilde{C}_n(\cal{L}))$, since in particular $n$-labeled circles are mapped to the 
trivial $\sln$ representation.

\begin{rem}
In recent work, Cherednik and Elliot define composite DAHA-superpolynomials of torus knots, and suggest that these invariants should be 
related to HOMFLY-PT homology in the thickened annulus \cite{CE}. It is an interesting problem to relate their invariants to our annular $\sln$ link homology.
\end{rem}

Since $\SH(\tilde{C}_n(\cal{L}))$ is a complex whose terms are ($q$-graded) $\sln$ representations, and the differentials are degree-zero intertwiners, 
the following generalization of Grigsby-Licata-Wehrli's result is again immediate.

\begin{cor}
For any colored, framed, annular link $\cal{L}$, $\saKhR(\cal{L})$ carries an action of $\sln$.
\end{cor}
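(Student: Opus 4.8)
The plan is to observe that this corollary is now immediate from the construction, in exact parallel with the $n=2$ case treated earlier. First I would recall that the functor $\grvTr(\Foam{n}{}) \xrightarrow{\SH} \grRep(\sln)$ — the general-$n$ analogue of the functor in Lemma \ref{lem:sl2Rep}, obtained by setting $t=0$ and then applying classical skew Howe duality on $\bigwedge^N(\C^n \otimes \C^m)$ — takes values in the category of formally $q$-graded finite-dimensional $\sln$-representations, sending each object to such a representation and, crucially, each morphism to a degree-zero $\sln$-intertwiner. This is precisely the content of (the $n$-version of) Lemma \ref{lem:sl2Rep}, whose proof we were told carries over verbatim to general $n$.

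Next I would apply $\SH$ termwise to the complex $\tilde{C}_n(\cal{L}) \in \Kom(\grvTr(\Foam{n}{}))$ produced in steps \eqref{step1} and \eqref{step2}. The result $\SH(\tilde{C}_n(\cal{L}))$ is then a bounded complex in $\grRep(\sln)$: its chain groups are ($q$-graded) $\sln$-modules and all its differentials are $\sln$-equivariant. The homology of such a complex inherits an $\sln$-module structure, since the kernel and the image of any intertwiner between $\sln$-modules are submodules; this elementary fact is all that is needed. As $\saKhR(\cal{L})$ is by definition this homology, it carries an $\sln$-action.

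Finally, to confirm that the action is attached canonically to $\cal{L}$ and not merely to the auxiliary diagram, I would invoke the remark immediately preceding the corollary: applying a framed Reidemeister move to $\cal{L}$ changes $C_n(\cal{L})$ only up to homotopy equivalence and tensoring with $n$-labeled essential circles, while the lift $\tilde{C}_n(\cal{L})$ is itself unique up to homotopy equivalence; since homotopy equivalences of complexes in $\grRep(\sln)$ are built from $\sln$-equivariant maps and $n$-labeled circles map to the trivial representation, the isomorphism type of $\saKhR(\cal{L})$ as an $\sln$-module is well-defined. There is no real obstacle at this stage — the genuine work was already absorbed into the construction of $\SH$ (which rests on skew Howe duality together with the trace computations of \cite{BHLZ,BHLW}) and into Proposition \ref{prop:equivHomCat}, which makes step \eqref{step2} available for general $n$; granting those, the corollary is a formality mirroring the $\slnn{2}$ argument.
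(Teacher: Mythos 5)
Your proposal is correct and matches the paper's argument: the corollary is immediate because the chain groups of $\SH(\tilde{C}_n(\cal{L}))$ are ($q$-graded) $\sln$-representations and the differentials are degree-zero intertwiners, so homology inherits the $\sln$-action. The well-definedness under Reidemeister moves that you spell out at the end is handled in the paper in the paragraph following the definition of $\saKhR$, so your write-up is just a slightly more explicit version of the same proof.
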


\subsection{Decategorification}

In \cite{APS}, Asaeda, Przytycki, and Sikora show that $\saKh(\cal{L})$ categorifies the Kauffman skein module of $\cal{A}$, 
although their proof is somewhat subtle as there are \emph{a priori} too many elements in their decategorification. 
It is known that the Kauffman bracket skein module of $\cal{A}$ is isomorphic to $\Z[q,q^{-1}] \otimes_{\Z} \cal{R}_2$, 
where $\cal{R}_n \cong \mathrm{K}_0(\Rep(\sln))$ is the {\it representation ring} of $\sln$. 
This in turn suggests that we should utilize the $\sln$ action on $\saKhR$ in our decategorification.

To begin, we first analyze the $\sln$ analog of the Kauffman bracket skein module of the annulus. 
Let $\AWeb$ denote the $\Z[q,q^{-1}]$-module of Cautis-Kamnitzer-Morrison's $\sln$ webs embedded in the annulus. 
The identification of the category of $\sln$ webs with the subcategory of $\Rep(U_q(\sln))$ generated by fundamental representations in \cite{CKM}
shows that $\AWeb \cong \vTr(\Rep(U_q(\sln)))$. Note also that $\AWeb$ is in fact an algebra, with multiplication induced by the 
tensor product of $\Rep(U_q(\sln))$, \ie given by annular gluing.

\begin{lem}\label{lem:AWeb}
$\AWeb \cong \Z[q,q^{-1}] \otimes_\Z \cal{R}_n$
\end{lem}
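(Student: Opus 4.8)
The plan is to identify the set $\cal{B}$ of nested, labeled essential circles in $\AWeb$ with the monomial $\Z$-basis of $\cal{R}_n$, viewing the latter as the polynomial ring $\Z[\Lambda_1,\dots,\Lambda_{n-1}]$ with $\Lambda_k = [\wedge^k\C^n]$ the class of the $k$th fundamental representation. First I would invoke Lemma~\ref{lem:annEvalAlgo}, which says exactly that the elements of $\cal{B}$ (discarding circles labeled $0$ or $n$, since $\wedge^0\C^n$ and $\wedge^n\C^n$ are trivial) span $\AWeb$ over $\Z[q,q^{-1}]$. Recording a configuration with $c_k$ circles labeled $k$ as the monomial $\Lambda_1^{c_1}\cdots\Lambda_{n-1}^{c_{n-1}}$ gives a bijection between $\cal{B}$ and the standard $\Z$-basis of $\cal{R}_n$, so it remains to promote this bijection to a $\Z[q,q^{-1}]$-algebra isomorphism, and the one missing ingredient for that is the $\Z[q,q^{-1}]$-linear independence of $\cal{B}$.

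For linear independence I would use the identification $\AWeb \cong \vTr(\nWeb)$, with $\nWeb$ the category of $\sln$ webs of \cite{CKM}, together with the standard facts that $\vTr$ is insensitive to idempotent completion and commutes with base change (cf. \cite{BHLZ}), giving $\AWeb \otimes_{\Z[q,q^{-1}]}\Q(q) \cong \vTr\bigl(\mathrm{Kar}(\nWeb\otimes\Q(q))\bigr)$. By \cite{CKM}, $\mathrm{Kar}(\nWeb\otimes\Q(q))$ is equivalent to the full subcategory of $\Rep(U_q(\sln))$ tensor-generated by the fundamental representations, and since every simple $U_q(\sln)$-module is a summand of a tensor product of fundamentals this is all of $\Rep(U_q(\sln))$, which is semisimple; hence $\AWeb\otimes\Q(q)\cong\vTr(\Rep(U_q(\sln)))\cong\bigoplus_\lambda\Q(q)\cdot[\id_{V_\lambda}]\cong\Q(q)\otimes_\Z\cal{R}_n$, the sum being over dominant weights. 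Under this isomorphism, a configuration with circles labeled $a_1\ge\cdots\ge a_k$ is sent to $[\id_{\wedge^{a_1}\C^n\otimes\cdots\otimes\wedge^{a_k}\C^n}]$; decomposing this tensor product into simples, its highest weight $\lambda=\omega_{a_1}+\cdots+\omega_{a_k}$ occurs with multiplicity one while every other constituent $V_\mu$ satisfies $\mu<\lambda$. Thus, ordering dominant weights by any linear refinement of the dominance order, $\cal{B}$ is unitriangular against the basis $\{[\id_{V_\lambda}]\}$ and so is itself a $\Q(q)$-basis of $\AWeb\otimes\Q(q)$; in particular $\cal{B}$ is $\Z[q,q^{-1}]$-linearly independent, hence a $\Z[q,q^{-1}]$-basis of $\AWeb$. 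The resulting $\Z[q,q^{-1}]$-module isomorphism $\AWeb\xrightarrow{\sim}\Z[q,q^{-1}]\otimes_\Z\cal{R}_n$ is then an algebra map because annular gluing corresponds to $\vTr(\id_A)\cdot\vTr(\id_B)=\vTr(\id_{A\otimes B})$, which matches the product in $\cal{R}_n$; this incidentally recovers the commutativity of annular gluing from the braiding on $\Rep(U_q(\sln))$.

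I expect the linear-independence step to be where the real content lies — it is the ``a priori too many elements'' phenomenon that Asaeda--Przytycki--Sikora confronted in the $n=2$ case — and the point is that after base change to $\Q(q)$ the web category becomes semisimple, at which stage the triangularity of nested circles against the classes of simple modules closes the argument. The only auxiliary points needing care are the compatibility of $\vTr$ with idempotent completion and with the base change $\Z[q,q^{-1}]\to\Q(q)$, which I would record as a short lemma or extract from \cite{BHLZ}, and the (standard) fact that the fundamental representations tensor-generate all of $\Rep(U_q(\sln))$.
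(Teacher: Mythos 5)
Your proof is correct, and it rests on the same underlying mechanism as the paper's --- the identification $\AWeb \cong \vTr(\Rep(U_q(\sln)))$ together with the unitriangularity of tensor products of fundamental representations against the simples --- but it packages the argument genuinely differently. The paper's proof is essentially a citation: the simples form an ``upper-triangular basis'' of $\Rep(U_q(\sln))$ in the sense of \cite[Section 4.3]{BHLZ}, and the general result there immediately yields that the trace is the free $\Z[q,q^{-1}]$-module on the irreducible classes $[\id_{V_\lambda}]$, which is $\Z[q,q^{-1}]\otimes_\Z\cal{R}_n$; in particular the basis the paper exhibits is the set of simples, and Lemma \ref{lem:annEvalAlgo} is not invoked in its proof. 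You instead exhibit the \emph{monomial} basis of $\cal{R}_n$ directly inside $\AWeb$ as the nested labeled circles: integral spanning via Lemma \ref{lem:annEvalAlgo}, and linear independence by base change to $\Q(q)$, semisimplicity of $\Rep(U_q(\sln))$, and the same unitriangularity, unpacked by hand (your maximal-weight argument for independence of an infinite triangular family is sound, since each tensor product of fundamentals has only finitely many constituents and its highest weight determines the circle configuration). What your route buys is an explicit integral statement --- nested circles are a $\Z[q,q^{-1}]$-basis of $\AWeb$ --- without leaning on the \cite{BHLZ} formalism, and it is rather more careful than the paper about the facts that the CKM equivalence identifies the web category only with the \emph{fundamental} subcategory (whence the Karoubi-completion step) and is a statement over $\Q(q)$ while $\AWeb$ is defined over $\Z[q,q^{-1}]$. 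What the paper's route buys is brevity. The final step, matching the algebra structures via the tensor product on $\Rep(U_q(\sln))$, is the same in both.
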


\begin{proof}
The set of finite-dimensional irreducible quantum $\sln$ representations gives an upper-triangular basis for $\Rep(U_q(\sln))$, in the sense of \cite[Section 4.3]{BHLZ}. 
It then follows that $\AWeb \cong \vTr(\Rep(U_q(\sln)))$ is isomorphic to the free $\Z[q,q^{-1}]$-module spanned by the finite-dimensional 
irreducible (quantum) $\sln$ representations. The latter is precisely $\Z[q,q^{-1}] \otimes_\Z \cal{R}_n$. Moreover, the algebra structures on $\AWeb$ 
and $\Z[q,q^{-1}] \otimes_\Z \cal{R}_n$ coincide since both are induced by tensor product in $\Rep(U_q(\sln))$.
\end{proof}

Given a colored, framed, annular link, we can consider both $\saKhR(\cal{L})$ and the invariant $[\cal{L}]_n$ in $\AWeb$ determined by 
the shifts of \cite[Corollary 6.2.3]{CKM} giving the decategorification of the complexes from \cite{QR}. 
We can decategorify $\saKhR(\cal{L})$ by taking the formal alternating sum of homology groups, and taking into account the $q$-grading on $\grRep(\sln)$, 
this gives a formal $\Z[q,q^{-1}] $-linear combination of $\sln$ representations.

\begin{prop}
Viewing both as elements of $\Z[q,q^{-1}] \otimes_\Z \cal{R}_n$, we have $\mathrm{K}_0(\saKhR(\cal{L})) = [\cal{L}]_n$.
\end{prop}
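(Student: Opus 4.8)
The plan is to establish the identity $\mathrm{K}_0(\saKhR(\cal{L})) = [\cal{L}]_n$ by showing both sides arise as the image, under the same decategorification map, of complexes that agree at the decategorified level. The key observation is that the entire construction of $\saKhR$ was built to be compatible with decategorification at every step, so the proof is essentially a bookkeeping argument tracking Euler characteristics through the three steps \eqref{step1}--\eqref{step3}.

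First I would recall that $\saKhR(\cal{L})$ is the homology of $\SH(\tilde{C}_n(\cal{L}))$, where $\tilde{C}_n(\cal{L}) \in \Kom(\grvTr(\Foam{n}{}))$ is homotopy equivalent (via Proposition \ref{prop:equivHomCat}) to the complex $C_n(\cal{L}) \in \Kom(\hTr(\Foam{n}{}))$ assigned by the skein relations \eqref{eq:slnSkein} and \eqref{eq:CapsAndCups}. Taking the Euler characteristic of a complex is a homotopy invariant, and $\mathrm{K}_0$ of the homology equals the alternating sum of the chain groups; moreover, under the equivalence $\Kom(\hTr(\Foam{n}{})) \cong \Kom(\grvTr(\Foam{n}{}))$ from Proposition \ref{prop:equivHomCat}, these Euler characteristics are identified (the equivalence preserves the relevant $\Z[q,q^{-1}]$-module structure on $\mathrm{K}_0$). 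So it suffices to compute the image of $[C_n(\cal{L})]$ under the map on $\mathrm{K}_0$ induced by $\SH$, and compare it to $[\cal{L}]_n$.

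Next I would identify the decategorification of the functor $\grvTr(\Foam{n}{}) \xrightarrow{\SH} \grRep(\sln)$ with the skew Howe duality map. On the level of $\mathrm{K}_0$, Lemma \ref{lem:sl2Rep} (and its $\sln$ analog) sends an essential $a$-labeled circle to $\wedge^a \C^n$, the $a$-th fundamental representation (up to grading shift), which is exactly the image of a labeled essential circle under the identification $\AWeb \cong \vTr(\Rep(U_q(\sln)))$ from Lemma \ref{lem:AWeb}. Then I would invoke the fact that $[\cal{L}]_n$, defined via the shifts of \cite[Corollary 6.2.3]{CKM}, is precisely the element of $\AWeb \cong \Z[q,q^{-1}]\otimes_\Z \cal{R}_n$ obtained by applying the decategorified skew Howe functor (= weight-space decomposition of $\bigwedge^N(\C^n\otimes\C^m)$, composed with the decategorified Rickard complex differentials, i.e.\ the $R$-matrix action) to the decategorified complex $[C_n(\cal{L})]$ of $\sln$ webs in the annulus. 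Since the skein relations \eqref{eq:slnSkein} and \eqref{eq:CapsAndCups} and the Rickard complexes categorify exactly the decategorified crossing and cap/cup formulas used in \cite{QR} and \cite{CKM}, the two sides match termwise: $\mathrm{K}_0(\SH(\tilde{C}_n(\cal{L}))) = \SH_{\mathrm{dec}}([C_n(\cal{L})]) = [\cal{L}]_n$.

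The main obstacle I anticipate is \emph{consistency of grading shifts}: one must verify that the degree conventions for complexes assigned to crossings in $\Foam{n}{}$ (from \cite{QR}) decategorify precisely to the normalization of \cite[Corollary 6.2.3]{CKM} that defines $[\cal{L}]_n$, and that the formal $q$-grading introduced on $\grvTr$ and carried through $\SH$ matches the $\Z[q,q^{-1}]$-grading on $\AWeb$. A secondary subtlety is checking that the homotopy equivalence of Proposition \ref{prop:equivHomCat} — which rests on the sign-flipped binomial identity ${a \brack b} = (-1)^b {-a+b-1 \brack b}$ and the homological-degree-shifting isomorphism \eqref{eq:FEswitch_Ho} — induces the identity on $\mathrm{K}_0$; this follows because each elementary move in the annular evaluation algorithm either rewrites a term within a fixed homological degree (contributing equally to both Euler characteristics) or replaces a term by a two-step complex whose alternating sum is the original, so no net change in $\mathrm{K}_0$ occurs. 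Once these normalization checks are in place, the result is immediate.
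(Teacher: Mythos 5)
Your argument is correct and follows essentially the same route as the paper's: both reduce the claim to the commutativity of the decategorification square relating $\Kom(\hTr(\Foam{n}{}))$ to $\Z[q,q^{-1}]\otimes_\Z\cal{R}_n$, use the annular evaluation algorithm together with Lemma \ref{lem:AWeb} to identify labeled essential circles with exterior powers $\wedge^a\C^n$, and invoke the identification from \cite{CKM} and \cite{QR} to recognize the image of $C_n(\cal{L})$ as $[\cal{L}]_n$. Your explicit check that the homotopy equivalence of Proposition \ref{prop:equivHomCat} acts as the identity on Euler characteristics is a point the paper leaves implicit, but it is the same proof.
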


\begin{proof}
We have the diagram
\begin{equation}\label{eq:decatsquare}
\begin{gathered}
\xymatrix{
\Kom(\hTr(\Foam{n}{})) \ar[rr] \ar[d] & & \Kom(\grRep(\sln)) \ar[d] \\
\mathrm{K}_0(\hTr(\Foam{n}{})) \ar[rr] & & \Z[q,q^{-1}] \otimes_{\Z} \cal{R}_n
} 
\end{gathered}
\end{equation}
where the vertical maps are given by taking alternating sums of terms in complexes, 
and we use \eg \cite{Rose2} to identify $\mathrm{K}_0(\mathrm{Kom}(\hTr(\Foam{n}{}))) \cong \mathrm{K}_0(\hTr(\Foam{n}{}))$. 

Under inspection, we find that $\mathrm{K}_0(\hTr(\Foam{n}{}))$ is precisely the $\Z[q,q^{-1}]$-module of clockwise oriented 
webs, in which we've retained the $n$-labeled edges\footnote{These are (annular) quantum $\gln$ webs.}. 
The bottom map is given using Lemma \ref{lem:annEvalAlgo} and then identifying $k$-labeled essential circles with the class of $\wedge^k \C^n$.
Equivalently, this is given as the composition of the map $\mathrm{K}_0(\hTr(\Foam{n}{})) \to \AWeb$, 
given by cutting $n$-labeled edges to tags as on page $6$ of \cite{CKM}, 
with the isomorphism from Lemma \ref{lem:AWeb}.

Furthermore, the image of $C_n(\cal{L})$ in $\AWeb$ under the composition
\[
\mathrm{Kom}(\hTr(\Foam{n}{})) \rightarrow  \mathrm{K}_0(\hTr(\Foam{n}{})) \rightarrow \AWeb
\]
precisely gives $[\cal{L}_n]$, using the identification from \cite[Theorem 5.3.1]{CKM}, and the related process from \cite{QR} which presents any tangle diagram in ladder form. 
The result then follows using the commutativity of the square in \eqref{eq:decatsquare}.
\end{proof}

%
\subsection{A computation}
%

We'll now perform a sample computation of $\saKhR(\cal{L})$ where 
\[
\cal{L} = 
\xy
(0,0)*{
\begin{tikzpicture}[scale=.5, rotate=90]
\draw[gray] (1,0) circle (3 and 4);
\draw[gray] (1,0) circle (.15 and .2);
\draw [very thick] (.5,-1.5) .. controls (.5,-1) and (-.5,-1) .. (-.5,-.5);
\draw [draw =white, double=black, very thick, double distance=1.25pt] (-.5,-1.5) .. controls (-.5,-1) and (.5,-1) .. (.5,-.5);
\draw [very thick] (.5,-.5) .. controls (.5,0) and (-.5,0) .. (-.5,.5);
\draw [draw =white, double=black, very thick, double distance=1.25pt] (-.5,-.5) .. controls (-.5,0) and (.5,0) .. (.5,.5);
\draw [very thick] (.5,.5) .. controls (.5,1) and (-.5,1) .. (-.5,1.5);
\draw [draw =white, double=black, very thick, double distance=1.25pt] (-.5,.5) .. controls (-.5,1) and (.5,1) .. (.5,1.5);
\draw[very thick, directed=.5] (-.5,1.5) to [out=90,in=180] (1,3) to [out=0,in=90] (2.5,1.5) to (2.5,-1.5) to [out=270,in=0] (1,-3) to [out=180,in=270] (-.5,-1.5);
\draw[very thick, directed=.5] (.5,1.5) to [out=90,in=180] (1,2) to [out=0,in=90] (1.5,1.5) to (1.5,-1.5) to [out=270,in=0] (1,-2) to [out=180,in=270] (.5,-1.5);
\end{tikzpicture}
};
\endxy \quad .
\]
This annular knot is the closure of the braid $\cal{B} = \xy
(0,0)*{
\begin{tikzpicture}[scale=.5, rotate=90]
\draw [very thick] (.5,-1.5) .. controls (.5,-1) and (-.5,-1) .. (-.5,-.5);
\draw [draw =white, double=black, very thick, double distance=1.25pt] (-.5,-1.5) .. controls (-.5,-1) and (.5,-1) .. (.5,-.5);
\draw [very thick] (.5,-.5) .. controls (.5,0) and (-.5,0) .. (-.5,.5);
\draw [draw =white, double=black, very thick, double distance=1.25pt] (-.5,-.5) .. controls (-.5,0) and (.5,0) .. (.5,.5);
\draw [very thick] (.5,.5) .. controls (.5,1) and (-.5,1) .. (-.5,1.5);
\draw[very thick, ->] (-.5,1.5) to (-.5,1.75);
\draw [draw =white, double=black, very thick, double distance=1.25pt] (-.5,.5) .. controls (-.5,1) and (.5,1) .. (.5,1.5);
\draw[very thick, ->] (.5,1.5) to (.5,1.75);
\end{tikzpicture}
};
\endxy$
and the complex $\llbracket \cal{B} \rrbracket_n$ assigned to $\cal{B}$ in $\Foam{n}{}$ lifts to the complex 
\[
\xymatrix{
q^{-3} \onell{[1,1]} \ar[r]^-{\begin{tikzpicture}[anchorbase, scale=.7] \draw [thick, <-] (0,0) .. controls (0,-.5) and (-.5,-.5) .. (-.5,0); \end{tikzpicture}}
& q^{-2}  \cal{F}\cal{E}\onell{[1,1]} \ar[rr]^-{\begin{tikzpicture}[anchorbase, scale=.7] \draw [thick,<-] (0,-.6) -- (0,0); \draw [thick, <-] (.5,0) -- (.5,-.6); \node at (0,-.3) {\tiny$\bullet$}; \end{tikzpicture}
\; - \; \begin{tikzpicture}[anchorbase,scale=.7] \draw [thick,<-] (0,-.6) -- (0,0); \draw [semithick, <-] (.5,0) -- (.5,-.6); \node at (.5,-.3) {\tiny$\bullet$}; \end{tikzpicture}}
& & \cal{F}\cal{E}\onell{[1,1]} \ar[r]^-{\begin{tikzpicture}[anchorbase, scale=.7] \draw [thick, <-] (0,0) .. controls (0,-.5) and (-.5,-.5) .. (-.5,0); \draw [thick, <-] (-.5,-1) .. controls (-.5,-.5) and (0,-.5) .. (0,-1); \end{tikzpicture}}
& \uwave{q^2 \cal{F}\cal{E}\onell{[1,1]}}
}
\]
in $\Ucat_Q(\glnn{2})^{0 \leq n}$, see \eg the (dual) computation in \cite[Section 10.2]{Cautis}. 

We can hence compute $\saKhR(\cal{L})$ by mapping this complex to $\mathrm{Kom}(\hTr(\Ucat_Q(\glnn{2})^{0 \leq n}))$, 
passing to an isomorphic complex in $\mathrm{Kom}(\grvTr(\Ucat_Q(\glnn{2})^{0 \leq n}))$, 
using $t=0$ skew Howe duality to map to a complex of (graded) $\sln$ modules, and finally computing homology.
Equations \eqref{eq:FEannularIso1} and \eqref{eq:FEannularIso2} again give an isomorphism 
$\cal{F}\cal{E}\onell{[1,1]} \cong q^{-1} \onenn{[2,0]} \oplus q \onenn{[2,0]}$ in $\hTr(\Ucat_Q(\glnn{2})^{0 \leq n})$,
so the above complex is isomorphic to the complex
\[
\xymatrix{
q^{-3} \onell{[1,1]} \ar[r]^-{A}
& q^{-3}  \onenn{[2,0]} \oplus q^{-1} \onenn{[2,0]} \ar[r]^-{B}
& q^{-1} \onenn{[2,0]} \oplus q \onenn{[2,0]} \ar[r]^-{C}
& \uwave{q \onenn{[2,0]} \oplus q^3 \onenn{[2,0]} }
}
\]
which lies in $\mathrm{Kom}(\grvTr(\Ucat_Q(\glnn{2})^{0 \leq n}))$. 
Using the identification with the current algebra of $\slnn{2}$, we compute that
\[
A = \begin{pmatrix} E1_{[2,0]} \\ E_1 1_{[2,0]} \end{pmatrix}
\;\; , \;\;
B = \begin{pmatrix} 0 & 0 \\ 0 & 0 \end{pmatrix}
\;\; , \;\;
C = \begin{pmatrix} E F_1 1_{[2,0]} - E F H_1 1_{[2,0]} & E F 1_{[2,0]} \\ E_1 F_1 1_{[2,0]} - E_1 FH_1 1_{[2,0]} & E_1 F 1_{[2,0]} \end{pmatrix}
\]
(here, we've dropped the first subscript on current algebra elements, since we work with $\slnn{2}$).
Taking the degree-zero part of the differential and applying skew Howe duality gives the complex
\[
\xymatrix{
q^{-3} \C^n \otimes \C^n \ar[r]^-{\left(\begin{smallmatrix} \pi \\ 0 \end{smallmatrix}\right)}
&  q^{-3}\wedge^2\C^n \oplus q^{-1} \wedge^2\C^n \ar[r]^-{\left(\begin{smallmatrix} 0 & 0 \\ 0 & 0 \end{smallmatrix}\right)}
& q^{-1}\wedge^2\C^n \oplus q^{1} \wedge^2\C^n \ar[r]^-{\left(\begin{smallmatrix} 0 & 2 \\ 0 & 0 \end{smallmatrix}\right)}
& \uwave{q^{1}\wedge^2\C^n \oplus q^{3} \wedge^2\C^n}
}
\]
so computing homology we find that
\[
\saKhR(\cal{L})^i = 
\begin{cases}
q^{-3} \mathrm{Sym}^2 \C^n & \text{if }  i=-3 \\
q^{-1} \wedge^2 \C^n & \text{if } i=-2 \\
q^{-1} \wedge^2 \C^n & \text{if } i=-1 \\
q^{3} \wedge^2 \C^n & \text{if } i=0 \\
0 & \text{else}. \\
\end{cases}
\]

\section{A spectral sequence to Khovanov-Rozansky homology}\label{KhR}

We now construct a spectral sequence from the sutured annular Khovanov-Rozansky homology of a colored, framed, annular link to 
the (traditional) Khovanov-Rozansky homology of the corresponding link in $S^3$, extending Robert's results in the $\slnn{2}$ case \cite{Rob}.
To do so, we'll describe a new procedure to extract the Khovanov-Rozansky homology of a 
balanced, colored, framed tangle closure from the 2-category $\Foam{n}{}^\bullet$
(or more precisely, its categorified quantum group analog, given in equation \eqref{eq:ndots} below).
Surprisingly, this approach relates current algebras to non-annular Khovanov-Rozansky homology. 

\begin{rem}
In the following, we must be especially careful with our gradings. 
Recall that the degree of a string diagram is computed locally in terms of generating morphisms, \eg
\[
\mathrm{deg}\left(
\xy
(0,0)*{
\begin{tikzpicture}[scale=.5]
\draw[thick,->] (0,0) to (0,1);
\node at (0,.5) {$\bullet$};
\end{tikzpicture}
};
\endxy
\right) = 2
\;\; , \;\;
\mathrm{deg} \left(
\;
\xy
(0,0)*{
\begin{tikzpicture}[scale=.5]
\draw[thick, <-] (0,-1) to [out=90,in=180] (.5,0) to [out=0,in=90] (1,-1);
\node at (1.25,-1) {\tiny$_i$};
\node at (1.25,.25) {\tiny$\mathbf{a}$};
\end{tikzpicture}
};
\endxy
\right) = 1+ a_i - a_{i+1}
\]
and the $q$-degree of a 2-morphism $q^{d_1} \cal{G} \xrightarrow{m_\mathsf{D}} q^{d_2} \cal{H}$ which is given 
by a string diagram $\mathsf{D}$ mapping between shifts of $1$-morphisms $\cal{G}$ and $\cal{H}$ is
\[
q\text{-}\mathrm{deg}(m_\mathsf{D}) = \mathrm{deg}(\mathsf{D}) - d_2 + d_1.
\]
We'll use $\HOM$ and $\END$ to denote the vector spaces of not-necessarily degree-zero morphisms.
\end{rem}

Let $\tau$ be a colored, framed tangle; we say that $\tau$ is \emph{balanced} if the colors and orientations of the left and right boundary points agree. 
Given such a tangle, we can consider the colored, framed link $\cal{L}_\tau \subset S^3$ obtained by taking its closure. 
In \cite{QR}, we showed that the Khovanov-Rozansky homology $\KhR(\cal{L})$ of a colored, framed link $\cal{L} \subset S^3$ can be computed from 
the complex $\llbracket \cal{L} \rrbracket_n$ assigned to $\cal{L}$ in $\Foam{n}{}^\bullet$ by applying the evaluation functor 
\[\HOM
\left(
\xy
(0,0)*{
\begin{tikzpicture}[scale=.4]
\draw[double] (2,1) to (0,1);
\draw[double] (2,0) to (0,0);
\node[scale=.75] at (1,.75) {$\vdots$};
\node at (2.375,1) {\small$_n$};
\node at (2.375,0) {\small$_n$};
\end{tikzpicture}
};
\endxy
, -
\right)
\]
from the identity tangle of a highest weight object to obtain a complex of graded $\C$-vector spaces, and computing homology. 
Equivalently, we can use equation \eqref{QRthm} to lift $\llbracket \cal{L} \rrbracket_n$ to a complex 
$\langle \cal{L} \rangle_n$ in the $2$-category\footnote{The additional quotient which kills $n$ dots on all strands specifies that we work 
with Khovanov-Rozansky homology, not its equivariant or deformed versions.} 
\begin{equation}\label{eq:ndots}
\Ucatc_Q(\glm)^{\bullet^n} := \Ucatc_Q(\glm)^{0 \leq n} \bigg/ 
\left(
\xy
(0,0)*{
\begin{tikzpicture}[scale=.5]
\draw[thick,->] (0,0) to (0,1);
\node at (0,.5) {$\bullet$};
\node at (.375,.625) {\tiny $n$};
\end{tikzpicture}
};
\endxy = 0
\right)
\end{equation}
for $m$ sufficiently large, 
apply $\HOM(\onenn{[n,\dots,n,0,\dots,0]},-)$ to obtain a complex of vector spaces, and take homology.

However, in the case of a tangle closure $\cal{L}_\tau$, we can actually compute $\KhR(\cal{L}_\tau)$ from the complex assigned to $\tau$ directly, avoiding the unnecessary closure step.
Indeed, we can again lift the complex $\llbracket \tau \rrbracket_n$ in $\Foam{n}{}$ to a complex $\langle \tau \rangle_n$ 
in\footnote{Again, for $m$ sufficiently large relative to the boundary (labelings) of our tangle. It suffices to take $m$ larger than $n$ multiplied by the number of boundary points, 
and we'll assume this condition for the duration.} 
$\Ucatc_Q(\glm)^{\bullet^n}$, 
and apply the functor $\HOM(\onenn{\mathbf{a}},-)$, with $\boldsymbol{a}=[a_1,\dots,a_k,0,\ldots,0]$ where $a_1,\ldots,a_k$ 
give the boundary labelings of the webs in  $\llbracket \tau \rrbracket_n$.
It is easy to see that the complexes $\HOM(\onenn{[n,\dots,n,0,\dots,0]},\langle \cal{L}_\tau \rangle_n)$ and $\HOM(\onenn{\mathbf{a}}, \langle \tau \rangle_n)$ are isomorphic,
up to an overall shift by $\sum_i a_i(n-a_i)$ in quantum degree.
We'll refer to both as the \emph{Khovanov-Rozansky complex}.

This setting is now actually very close to the annular one: the isomorphisms used to simplify the complex $C(\cal{L}_\tau)$ in $\hTr(\Foam{n}{})$ which slide ladder rungs around the annulus 
are analogous to the isomorphisms of vector spaces $\HOM(\onenn{\mathbf{b}},\cal{G} \onenn{\mathbf{c}} \cal{H} )\cong \HOM(\onenn{\boldsymbol{c}}, q^d \cal{H}\onenn{\boldsymbol{b}} \cal{G})$, 
for $1$-morphisms $\cal{G}, \cal{H} \in \Ucatc_Q(\glm)^{\bullet^n}$ given by
\begin{equation}\label{eq:slideunder}
\begin{tikzpicture}[anchorbase,decoration={brace}]
\draw[very thick] (0,1) -- (2.5,1) arc (0:-180:1.25);
\node at (1.25,.5) {$m$};
\node at (1.25,-.5) {$\mathbf{b}$};
\draw[thick] (.5,1) -- (.5,2);
\node at (.75,1.75) {\tiny \dots};
\draw[thick] (1,1) -- (1,2);
\draw[thick] (1.5,1) -- (1.5,2);
\node at (1.75,1.75) {\tiny \dots};
\draw[thick] (2,1) -- (2,2);
\node at (1.25,1.5) {$\mathbf{c}$};
\draw [decorate] (.4,2.2) -- (1.1,2.2);
\node at (.75,2.5) {\small $\cal{G}$};
\draw [decorate] (1.4,2.2) -- (2.2,2.2);
\node at (1.75,2.5) {\small $\cal{H}$};
\node at (1.25,-2) {};
\end{tikzpicture}
\qquad \mapsto \qquad
\begin{tikzpicture}[anchorbase,decoration={brace}]
\draw[very thick] (0,1) -- (2.5,1) arc (0:-180:1.25);
\node at (1.25,.5) {$m$};
\node at (1.25,-.5) {$\mathbf{b}$};
\draw[thick] (.5,1) .. controls (.5,1.25) and (-1,1.25) .. (-1,.5) .. controls (-1,-.5) and (0,-1)  .. (1.25,-1) .. controls (2.25,-1) and (3,-.5) .. (3,.5) .. controls (3,1.5) and (1.5,1.5) .. (1.5,2);
\node at (.75,1.1) {\tiny \dots};
\draw[thick] (1,1) .. controls (1,1.75) and (-1.5,1.75) .. (-1.5,.5) .. controls (-1.5,-.75) and (-.25,-1.5) .. (1.25,-1.5) .. controls (2.5,-1.5) and (3.5,-.75) .. (3.5,.5) .. controls (3.5,1.5) and (2,1.5) .. (2,2);
\draw[thick] (1.5,1) .. controls (1.5,1.25) and (.5,1.5) ..  (.5,2);
\node at (1.75,1.1) {\tiny \dots};
\draw[thick] (2,1) .. controls (2,1.25) and (1,1.5) .. (1,2);
\node at (1.25,-2) {$\mathbf{c}$};
\draw [decorate] (.4,2.2) -- (1.1,2.2);
\node at (.75,2.5) {\small $\cal{H}$};
\draw [decorate] (1.4,2.2) -- (2.1,2.2);
\node at (1.75,2.5) {\small $\cal{G}$};
\end{tikzpicture} \quad .
\end{equation}
Using these isomorphisms, together with isomorphisms between $1$-morphisms in $\Ucatc_Q(\glm)^{\bullet^n}$, 
we can mimic the proof of Proposition \ref{prop:equivHomCat} to obtain the following result.

\begin{thm}\label{thm:saKhRandKhR}
The Khovanov-Rozansky complex of a tangle closure $\cal{L}_\tau$ is homotopy equivalent to a complex in which each term takes the form 
$\bigoplus_{\mathbf{b},d} \HOM(\onenn{\boldsymbol{b}}, q^d \onenn{\boldsymbol{b}})$, 
and the differential is given by the action of the current algebra $\dcurm$ on the center of objects in $\Ucatc_Q(\glm)^{\bullet^n}$. 
This complex can be endowed with a filtration, and the associated graded complex is isomorphic to $\SH(\tilde{C}_n(\cal{L}_\tau))$.
\end{thm}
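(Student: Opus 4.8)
The plan is to transcribe the categorical annular web evaluation algorithm from the proof of Proposition~\ref{prop:equivHomCat} into the present non-annular setting, with the functor $\HOM(\onenn{\mathbf{a}},-)$ playing the role occupied there by $\hTr(\Foam{n}{})$ and the ``slide'' isomorphisms \eqref{eq:slideunder} playing the role of sliding ladder rungs around the annulus. Starting from the Khovanov--Rozansky complex $\HOM(\onenn{\mathbf{a}},\langle\tau\rangle_n)$, I would lift $\llbracket\tau\rrbracket_n$ to the complex $\langle\tau\rangle_n$ in $\Ucatc_Q(\glm)^{\bullet^n}$ as in Section~\ref{saKhR}, put each term of $\langle\tau\rangle_n$ in ladder form, and then process the terms one at a time using the categorified web relations (the lifts of \eqref{easywebmoves}) together with \eqref{eq:slideunder}, the categorical identity \eqref{eq:FEswitch_cat} when $k-l\geq j_1-j_2$, and the homotopy-category isomorphism \eqref{eq:FEswitch_Ho} otherwise. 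The point that makes this work exactly as in Proposition~\ref{prop:equivHomCat} is that \eqref{eq:slideunder} does not merely slide a $1$-morphism across the closure: it also cyclically rotates the base weight, precisely the way closing a $1$-morphism around the annulus renders its basepoint immaterial, so that once the web has been fully reduced each term of $\HOM(\onenn{\mathbf{a}},\langle\tau\rangle_n)$ has become a direct sum of shifted centers $\HOM(\onenn{\mathbf{b}},q^d\onenn{\mathbf{b}})=q^{-d}\END(\onenn{\mathbf{b}})$ of objects $\mathbf{b}$. Since every isomorphism used already holds in $\Ucatc_Q(\glm)^{0\leq}$, this is compatible with Corollary~\ref{cor:equivHomCat}.

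Next I would identify the differential of the reduced complex. Running the Rickard differentials of $\langle\tau\rangle_n$ through the rotations and web isomorphisms above, each component $\HOM(\onenn{\mathbf{b}},q^d\onenn{\mathbf{b}})\to\HOM(\onenn{\mathbf{b}'},q^{d'}\onenn{\mathbf{b}'})$ becomes the operation of inserting a dotted string diagram, closed up, into the trace closure of the identity $2$-morphism of $\onenn{\mathbf{b}}$. Via the isomorphism $\dcurm\cong\vTr(\cal{U}^*_Q(\slm))$ of Theorem~\ref{thm:Cur} and the evident action of the vertical trace on $\bigoplus_{\mathbf{b},d}\HOM(\onenn{\mathbf{b}},q^d\onenn{\mathbf{b}})$, these components are exactly the action of elements of $\dcurm$ on the centers of objects, which gives the first assertion of the theorem.

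For the filtration I would use the loop ($t$-)grading on $\dcurm$: each differential is a finite sum $\sum_{r\geq 0}d_r$ with $d_r$ assembled from current-algebra monomials of total loop degree $r$, and $d_r$ for $r>0$ strictly raises the internal ($q$-cohomological) grading on the centers by a positive amount while $d_0$ preserves it. Filtering the chain groups by this internal grading therefore produces a filtered complex with $d(F^p)\subseteq F^p$ whose associated graded differential is $d_0$, \ie the action of $\U(\slm)\subseteq\dcurm$. To complete the third step I would identify $\bigoplus_{\mathbf{b},d}\HOM(\onenn{\mathbf{b}},q^d\onenn{\mathbf{b}})$ carrying this $\U(\slm)$-action with the chain groups of $\SH(\tilde{C}_n(\cal{L}_\tau))$: the center $\END(\onenn{\mathbf{b}})$ in $\Ucatc_Q(\glm)^{\bullet^n}$ is (via $\Phi_n$) the endomorphism algebra of the identity web, a tensor product of Grassmannian cohomologies whose Poincaré series matches the weight-graded dimension of $\wedge^{b_1}\C^n\otimes\cdots\otimes\wedge^{b_m}\C^n=\SH(\onenn{\mathbf{b}})$, and under this identification the $t=0$ current-algebra action is the skew Howe action, since both are obtained by applying $\Phi_n$ followed by the $\sln$ analog of the skew-Howe functor of Lemma~\ref{lem:sl2Rep}. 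Finally, because $\tilde{C}_n(\cal{L}_\tau)$ arises from $C_n(\cal{L}_\tau)\in\hTr(\Foam{n}{})$ by the very same evaluation algorithm, the resulting bookkeeping of terms and differentials matches that of $\SH(\tilde{C}_n(\cal{L}_\tau))$ term for term, so $\mathrm{gr}$ of the filtered complex is isomorphic to $\SH(\tilde{C}_n(\cal{L}_\tau))$.

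The step I expect to be the main obstacle is this last identification of the associated graded: making precise (i) the action of $\dcurm$ on $\bigoplus_{\mathbf{b},d}\HOM(\onenn{\mathbf{b}},q^d\onenn{\mathbf{b}})$ and the claim that the reduced differentials are given by it, (ii) that the loop-degree-$r$ part of every current-algebra element genuinely raises the internal grading, so that the filtration is exhaustive and the differential is filtered, and (iii) that the graded-vector-space isomorphism $\END(\onenn{\mathbf{b}})\cong\SH(\onenn{\mathbf{b}})$ intertwines the two $\U(\slm)$-module structures, not merely the underlying vector spaces. Everything else is a transcription of arguments already carried out in the paper.
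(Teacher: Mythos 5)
Your proposal is correct and follows essentially the same route as the paper's proof: run the annular evaluation algorithm through $\HOM(\onenn{\mathbf{a}},-)$ using the slide isomorphisms \eqref{eq:slideunder}, identify the reduced differentials with the current-algebra action on centers via \eqref{eq:centeraction}, filter by the internal degree (the paper's $t$-degree, which the positive-loop-degree parts raise by exactly $2r$), and match the associated graded with $\SH(\tilde{C}_n(\cal{L}_\tau))$ via the identification $\END(\onenn{\mathbf{b}})\cong H^*(Gr_{b_1}(\C^n))\otimes\cdots\otimes H^*(Gr_{b_m}(\C^n))\cong\wedge^{b_1}\C^n\otimes\cdots\otimes\wedge^{b_m}\C^n$. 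The one point where the paper's closing argument differs slightly from yours is item (iii) of your list of obstacles: rather than tracing both actions through $\Phi_n$, the paper observes that the two degree-zero actions are both $\slm$-representations with weight spaces of the same dimensions, hence must coincide.
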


For details concerning the action of the vertical trace of a $2$-category $\cal{C}$ on the center of objects in a $2$-representation of $\cal{C}$, 
and in particular the action of $\U(\slm[t])$ on the center of objects in a $2$-representation of $\Ucat_Q(\slm)$, see \cite[Section 9]{BHLW}. 
Note that this action requires that the $2$-category $\cal{C}$ be cyclic.
Fortunately, the $2$-categories $\Ucat_Q(\slm)$ and $\Ucat_Q(\glm)$ are known to be isomorphic to $2$-categories which are 
cyclic (see \cite{BHLW2} and \cite{MSV2}), and we hence pass to such 
cyclic\footnote{In practice, this allows us to avoid keeping track of the signs which appear in the non-cyclic case when sliding a 2-morphism through caps and cups 
(and which don't appear when sliding such a 2-morphism around a cylinder in the trace).}
versions for the duration.
If the 2-representation of $\Ucat_Q(\slm)$ is presented diagrammatically, the action of the current algebra is then 
given by wrapping dotted circles around an element in the center of objects, \eg
\begin{equation}\label{eq:centeraction}
E_{i,r}: \;
\xy
(0,0)*{
\begin{tikzpicture}[scale=.5]
\draw[very thick] (0,0) circle (.5);
\node at (0,0) {$m$};
\end{tikzpicture}
};
\endxy
\; \mapsto \;
\xy
(0,0)*{
\begin{tikzpicture}[scale=.5]
\draw[very thick] (0,0) circle (.5);
\node at (0,0) {$m$};
\draw[blue, thick, rdirected=.5] (0,0) circle (1);
\node at (1,0) {$\bullet$};
\node at (1.25,.25) {\tiny$r$};
\end{tikzpicture}
};
\endxy
\;\; , \;\;
F_{i,r}: \;
\xy
(0,0)*{
\begin{tikzpicture}[scale=.5]
\draw[very thick] (0,0) circle (.5);
\node at (0,0) {$m$};
\end{tikzpicture}
};
\endxy
\; \mapsto \;
\xy
(0,0)*{
\begin{tikzpicture}[scale=.5]
\draw[very thick] (0,0) circle (.5);
\node at (0,0) {$m$};
\draw[blue, thick, directed=.5] (0,0) circle (1);
\node at (1,0) {$\bullet$};
\node at (1.25,.25) {\tiny$r$};
\end{tikzpicture}
};
\endxy \quad .
\end{equation}

\begin{proof}[Proof (of Theorem \ref{thm:saKhRandKhR})]
Consider the lift of the complex $\llbracket \tau \rrbracket_n$ in $\Foam{n}{}$ to the $n$-bounded quotient $\Ucatc_Q(\glm)^{0 \leq n}$, 
which we'll again (by slight abuse of notation) denote by $\langle \tau \rangle_n$. 
We'll compare the procedures from which we obtain $\KhR(\cal{L}_\tau)$ and $\saKhR(\cal{L}_\tau)$. 

To compute $\saKhR(\cal{L}_\tau)$ we consider $\langle \tau \rangle_n$ as a complex in $\hTr(\Ucatc_Q(\glm)^{0 \leq n})$,  
and we apply isomorphisms of the form
\begin{equation}\label{eq:slidearound}
\cal{E}^{(k)}_i \onell{\mathbf{c}} \cal{G} \onell{\mathbf{b}} \cong  \cal{G} \onell{\mathbf{b}} \cal{E}^{(k)}_i \onell{\mathbf{c}}
\;\; \text{and} \;\;
\cal{F}^{(k)}_i \onell{\mathbf{c}} \cal{G} \onell{\mathbf{b}} \cong  \cal{G} \onell{\mathbf{b}} \cal{F}^{(k)}_i \onell{\mathbf{c}}
\end{equation}
in $\hTr(\Ucatc_Q(\glm)^{0 \leq n})$ and isomorphisms coming from $\Ucatc_Q(\glm)^{0 \leq n}$ to use the (categorified quantum group version of the)
annular evaluation algorithm to obtain a complex $\tilde{\langle \tau \rangle}_n$ which is homotopy equivalent to $\langle \tau \rangle_n$, and whose 
terms take the form $\bigoplus_{\mathbf{b},l} q^{l} \onenn{\mathbf{b}}$. 
Such a complex necessarily lies in $\grvTr(\Ucat_Q(\glm)^{0 \leq n})$, 
so we can use $t=0$ skew Howe duality to map each $\onell{\mathbf{b}}$ to the $\sln$ representation 
$\wedge^{b_1}\C^n \otimes \cdots \otimes \wedge^{b_m} \C^n$ and to send the differentials to the maps of $\sln$ representations given by the 
skew Howe dual action of $\glm$. This recovers the complex $\SH(\tilde{C}_n(\cal{L}_\tau))$, whose homology is $\saKhR(\cal{L}_\tau)$.

Alternatively, we can first apply $\HOM(\onell{\mathbf{a}}, - )$ to $\langle \tau \rangle_n$ to obtain a complex which gives the Khovanov-Rozansky complex
after passing to the quotient in equation \eqref{eq:ndots}.
We can then use equation \eqref{eq:slideunder} to apply the isomorphisms
\[
\HOM(\onell{\mathbf{b}}, \cal{E}^{(k)}_i \onell{\mathbf{c}} \cal{G} \onell{\mathbf{b}}) \cong  \HOM(\onell{\mathbf{c}} , q^{2k(k+c_i-c_{i+1})} \cal{G} \onell{\mathbf{b}} \cal{E}^{(k)}_i \onell{\mathbf{c}})
\]
and
\[
\HOM(\onell{\mathbf{b}}, \cal{F}^{(k)}_i \onell{\mathbf{c}} \cal{G} \onell{\mathbf{b}}) \cong \HOM(\onell{\mathbf{c}} , q^{2k(k-c_i+c_{i+1})} \cal{G} \onell{\mathbf{b}} \cal{F}^{(k)}_i \onell{\mathbf{c}})
\]
(which are the analogs of those in equation \eqref{eq:slidearound}) 
and isomorphisms in $\Ucatc_Q(\glm)^{0 \leq n}$, mimicking the annular evaluation algorithm. 
In the end, we arrive at a complex homotopy equivalent to $\HOM(\onell{\mathbf{a}}, \langle \tau \rangle_n)$ and whose terms take the form 
$\bigoplus_{\mathbf{b},d} \HOM(\onenn{\mathbf{b}}, q^{d} \onenn{\mathbf{b}})$, with the number of summands, and the $\glm$ weights appearing in the terms, 
the same as those in $\tilde{\langle \tau \rangle}_n$. 
Denote the image of this complex in $\Ucatc_Q(\glm)^{\bullet^n}$ by $\widehat{\langle \tau \rangle}_n$.
Under careful inspection, we find that the differentials in $\widehat{\langle \tau \rangle}_n$ are given by precisely the same elements 
in the current algebra $\U(\glm[t])$ as in $\tilde{\langle \tau \rangle}_n$, acting here on the center of objects in $\Ucatc_Q(\glm)^{\bullet^n}$
via equation \eqref{eq:centeraction}.
This confirms the first statement of the result.

For the second statement, we first note that the chain groups in the Khovanov-Rozansky complex are isomorphic 
(as vector spaces) to those in the sutured annular Khovanov-Rozansky complex. 
Indeed, this follows since the algebra $\END(\onenn{\mathbf{b}})$ in $\Ucatc_Q(\glm)^{\bullet^n}$ is identified under 
equation \eqref{QRthm} with the algebra of endomorphisms of parallel strands in $\Foam{n}{}^\bullet$, labeled according to $\mathbf{b}$. 
It follows from \cite[Remark 4.1]{QR} that the latter is isomorphic to $H^*(Gr_{b_1}(\C^n)) \otimes \cdots \otimes H^*(Gr_{b_m}(\C^n))$, 
which in turn is isomorphic to $\wedge^{b_1}\C^n \otimes \cdots \otimes \wedge^{b_m} \C^n$
(using \eg the geometric Satake isomorphism, see \cite{CK02}), as desired.

It remains to define the filtration degree on the chain groups in $\widehat{\langle \tau \rangle}_n$, 
and to identify the degree-preserving differential with the differential in $\SH(\tilde{C}_n(\cal{L}_\tau))$.
Define the filtration degree on $\HOM(\onenn{\mathbf{b}}, q^{d} \onenn{\mathbf{b}})$ by
\begin{equation}\label{eq:tdeg}
t\text{-}\mathrm{deg}(m) = q\text{-}\mathrm{deg}(m) + d - \sum_{j=1}^m b_j (n-b_j)
\end{equation}
Note that the first two terms on the right-hand side of equation \eqref{eq:tdeg} combine to simply compute the degree of a string diagram. 
Using this, it follows easily that the action of the current algebra elements $E_{i,r}$ and $F_{i,r}$ (and hence $H_{i,r}$) via equation \eqref{eq:centeraction} raises $t$-degree by $2r$. 
For example, in the case of $1_\mathbf{b} E_{i,r} 1_\mathbf{c}$ this follows since the $r$ dots contribute $2r$ to the $t$-degree, 
and the clockwise $i$-labeled cap and cup contribute $2+2(b_{i+1}-b_i)$ which is exactly cancelled 
since $\sum_{j=1}^m c_j (n-c_j) - \sum_{j=1}^m b_j (n-b_j) = -2+2(b_{i}-b_{i+1})$.
Observe also that, up to a global shift, the $q$-degree of a summand of a chain group in $\SH(\tilde{C}_n(\cal{L}_\tau))$,
which differs from the $q$-degree in $\widehat{\langle \tau \rangle}_n$ since there is no shift on the right-hand side of equation \eqref{eq:slidearound},
can be computed by taking $t\text{-}\mathrm{deg}(m) - q\text{-}\mathrm{deg}(m)$ for any $m$ in the corresponding term in $\widehat{\langle \tau \rangle}_n$.

Defining $F^p=\{x \; | \; t\text{-}\mathrm{deg}(x)\geq p \}$, we have a decreasing filtration $\cdots \supset F^p \supset F^{p+1} \supset \cdots$ on $\widehat{\langle \tau \rangle}_n$
compatible with the differential. The $t$-degree preserving part of the differential is exactly the part of the differential in current algebra degree-zero, 
so it suffices to show that the action of these elements coming from equation \eqref{eq:centeraction} agrees with the one coming from skew Howe duality, 
after identifying $\END(\onenn{\mathbf{b}})$ with $\wedge^{b_1}\C^n \otimes \cdots \otimes \wedge^{b_m} \C^n$.
Indeed, these must agree since both cases give representations of $\slm$ (\ie the degree-zero part of the current algebra) 
with weight spaces of the same dimension.
\end{proof}

The filtration defined in the proof of Theorem \ref{thm:saKhRandKhR} is bounded, so \eg \cite[Theorem 2.6]{McCleary} shows that the corresponding 
spectral sequence converges to the associated graded of the homology of the complex. This in turn is isomorphic to the homology of the complex itself 
(working over a field). We hence obtain the following.

\begin{cor}\label{cor:SS}
Given a colored, framed, balanced tangle $\tau$, there exists a spectral sequence whose first page is the sutured annular Khovanov-Rozansky 
homology of the annular closure of $\tau$ and which converges to the Khovanov-Rozansky homology of $\cal{L}_\tau$.
\end{cor}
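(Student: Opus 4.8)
The plan is to deduce this immediately from Theorem~\ref{thm:saKhRandKhR} together with the standard spectral sequence attached to a bounded filtered complex. First I would recall from Theorem~\ref{thm:saKhRandKhR} that the Khovanov-Rozansky complex $\widehat{\langle \tau \rangle}_n$ of $\cal{L}_\tau$ carries a decreasing filtration $\cdots \supset F^p \supset F^{p+1} \supset \cdots$, defined via the $t$-degree of equation~\eqref{eq:tdeg}, which is compatible with the differential and whose associated graded complex is isomorphic to $\SH(\tilde{C}_n(\cal{L}_\tau))$ --- the very complex whose homology computes $\saKhR(\cal{L}_\tau)$.

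Next I would check that this filtration is bounded: since $\tau$ has finitely many crossings, the complex $\langle \tau \rangle_n$ (hence $\widehat{\langle \tau \rangle}_n$) has finitely many terms, each a finite direct sum of spaces $\HOM(\onenn{\mathbf{b}}, q^d \onenn{\mathbf{b}})$ of bounded $q$-degree with only finitely many $\glm$ weights $\mathbf{b}$ appearing; consequently only finitely many $t$-degrees occur, so $F^p = \widehat{\langle \tau \rangle}_n$ for $p \ll 0$ and $F^p = 0$ for $p \gg 0$.

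Then I would invoke the spectral sequence of a bounded filtered cochain complex (e.g.\ \cite[Theorem~2.6]{McCleary}): its $E_0$ page is the associated graded complex with its induced differential, which by Theorem~\ref{thm:saKhRandKhR} is $\SH(\tilde{C}_n(\cal{L}_\tau))$; hence its $E_1$ page is the homology of this complex, namely the sutured annular Khovanov-Rozansky homology of the annular closure of $\tau$. Because the filtration is bounded, the spectral sequence converges, with $E_\infty$ the associated graded of $H^*(\widehat{\langle \tau \rangle}_n)$ for the induced filtration; by Theorem~\ref{thm:saKhRandKhR} (together with the identification of $\HOM(\onenn{[n,\dots,n,0,\dots,0]},\langle \cal{L}_\tau \rangle_n)$ with the Khovanov-Rozansky complex up to an overall grading shift) this homology is $\KhR(\cal{L}_\tau)$. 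Working over a field, the associated graded of a filtered vector space is (non-canonically) isomorphic to the space itself, so the spectral sequence converges to $\KhR(\cal{L}_\tau)$, as claimed.

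Since all the substantive content --- the construction of the filtered complex and the identification of its associated graded with $\SH(\tilde{C}_n(\cal{L}_\tau))$, obtained by running the annular evaluation algorithm of Proposition~\ref{prop:equivHomCat} in parallel on the two sides --- is already contained in Theorem~\ref{thm:saKhRandKhR}, I do not expect a genuine obstacle here; the only points requiring care are verifying boundedness of the filtration (so that convergence is automatic and no $\varprojlim^1$ issues arise) and keeping straight that it is the $E_1$ page, not $E_0$, that is identified with $\saKhR$.
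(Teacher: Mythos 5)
Your argument is correct and is essentially the paper's own: the corollary is deduced from Theorem~\ref{thm:saKhRandKhR} by noting that the $t$-degree filtration is bounded, invoking \cite[Theorem 2.6]{McCleary} for convergence to the associated graded of the homology, and using that over a field this is isomorphic to the homology itself. Your additional verification of boundedness and the explicit identification of the $E_1$ page are just fuller versions of steps the paper leaves implicit.
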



\bibliographystyle{plain}

%

%
\end{document}